   \def\MR#1{}
\setlist[enumerate]{font=\rm}
\newtheorem{theorem}{Theorem}[section]
\newtheorem{lemma}[theorem]{Lemma}
\newtheorem{proposition}[theorem]{Proposition}
\newtheorem{corollary}[theorem]{Corollary}
\theoremstyle{definition}
\newtheorem{definition}[theorem]{Definition}
\newtheorem{definitiontheorem}[theorem]{Definition--Theorem}
\newtheorem{example}[theorem]{Example}
\newtheorem{remark}[theorem]{Remark}
\newcommand{\lmod}{\operatorname{\mathsf{\hspace{-2pt}-mod}}}
\newcommand{\lproj}{\operatorname{\mathsf{\hspace{-2pt}-proj}}}
\newcommand{\thick}{\operatorname{\mathsf{thick}}}
\newcommand{\silt}{\operatorname{\mathsf{silt}}}
\newcommand{\twosilt}{\operatorname{\mathsf{2-silt}}}
\newcommand{\twotilt}{\operatorname{\mathsf{2-tilt}}}
\newcommand{\Rad}{{\operatorname{Rad}\nolimits}}
\newcommand{\Ext}{\operatorname{Ext}\nolimits}
\newcommand{\Hom}{\operatorname{Hom}\nolimits}
\newcommand{\induc}{{\operatorname{Ind}\nolimits}}
\newcommand{\restr}{{\operatorname{Res}\nolimits}}
\newcommand{\add}{\operatorname{\mathsf{add}}}
\newcommand{\stautilt}{\operatorname{\mathrm{s\tau-tilt}}}
\newcommand{\inertiagp}{I}
\newcommand{\decompgp}{I}
\title{
    On support $\tau$-tilting modules over blocks covering cyclic blocks
    }
\author{Ryotaro KOSHIO}
\address[Ryotaro KOSHIO]{Graduate School of Science, Department of Mathematics,  
Tokyo University of Science}
\email{1120702@ed.tus.ac.jp}
\author{Yuta KOZAKAI}
\address[Yuta KOZAKAI]{Department of Mathematics,  
Tokyo University of Science}
\email{kozakai@rs.tus.ac.jp}
\date{\today}
\subjclass[2010]{16G10, 20C20}
\keywords{Support $\tau$-tilting modules, Blocks of finite groups, Cyclic blocks, Two-term tilting complexes, Induction functors}
\begin{document}

\begin{abstract}
    Support $\tau$-tilting modules correspond to some classes of categorical objects bijectively, such as two-term tilting complexes for any finite dimensional symmetric algebra.
    This fact motivates us to classify support $\tau$-tilting modules over blocks of finite groups. Therefore we classify support $\tau$-tilting modules over particular blocks of finite groups by using the modular representation theoretical approaches including Clifford theory, Green's indecomposability theorem and so on.
\end{abstract}
\maketitle

\section{Introduction}
Adachi--Iyama--Reiten introduced the notion of $\tau$-tilting modules and support $\tau$-tilting modules, which are generalizations of the class of tilting modules in \cite{MR3187626}.
Thanks to this, we get to be able to make nontrivial considerations on the classes of algebras such as self-injective algebras whose tilting modules coincide with progenerators in Morita theory.
Let $\Lambda$ be a finite dimensional symmetric algebra (for example, a group algebra of a finite group or its block).
We denote by $\stautilt \Lambda$ the set of isomorphism classes of basic support $\tau$-tilting $\Lambda$-modules.
By many researchers, it is shown that support $\tau$-tilting modules over $\Lambda$ correspond bijectively to various classes of categorical objects associated with $\Lambda$, such as the set of two-term silting complexes in $K^b(\Lambda\lproj)$ \cite{MR3187626}, the set of functorially finite torsion classes in $\Lambda\lmod $ \cite{MR3187626}, the set of two-term simple-minded collections in $D^b(\Lambda\lmod)$ \cite{MR3178243}, the set of intermediate t-structures in $D^b(\Lambda\lmod)$ \cite{MR3220536}, the set of left finite semibricks in $\Lambda\lmod$ \cite{10.1093/imrn/rny150}, and so on. In particular, since $\Lambda$ is a finite dimensional symmetric algebra, silting complexes in $K^b(\Lambda\lproj)$ are tilting complexes by \cite{MR2927802}.
Our interest is the set of two-term tilting complexes over $\Lambda$, which is denoted by $\twotilt \Lambda$. In \cite{MR3187626}, it is shown that the set $\stautilt \Lambda$ admits  a partial order structure which is consistent with the one to one correspondences between $\stautilt \Lambda$ and $\twotilt \Lambda$.
The class of two-term tilting complexes contains the one of Okuyama--Rickard tilting complexes, which plays an important role in the study of derived equivalences of symmetric algebras \cite{okuyama1997some}.
Moreover, the information of two-term tilting complexes may allow us to classify all tilting complexes in some cases
(for example,
see \cite{MR3687098}).
For these reasons, the study of two-term tilting complexes over blocks of group algebras may provide some approaches for the solution of the famous conjecture called Brou\'e's Abelian Defect Group Conjecture.
Therefore, we believe that considerations on support $\tau$-tilting modules are also quite meaningful for modular representation theory.
At present, however, there have been few studies on the relationship between $\tau$-tilting theory and modular representation theory of finite groups, while $\tau$-tilting theory has been actively
studied by many researchers.

Let $G$ be a finite group, $k$ an algebraically closed field of characteristic $p$ and $B$ a block of $kG$.
If $B$ has a cyclic defect group (or equivalently, $B$ is of finite representation type), then $B$ is a simple algebra or a Brauer tree algebra (see \cite[Chapter 5]{MR860771}).
Since $\tau$-tilting theory for Brauer tree algebras has been well studied (for example, \cite{MR3461065}, \cite{aoki2018torsion}, \cite{ASASHIBA2020119}, \cite{aokibinomtautilt} and \cite{MR3848421}),
there are a lot of information to consider the poset structure of $\stautilt B$ in the case where $B$ has a cyclic defect group.
If $p=2$ and $B$ has a dihedral, semidihedral, or generalized quaternion defect group (or equivalently, $B$ is of tame representation type), then $B$ is a Brauer graph algebra appearing in the lists in Erdmann's classification of tame blocks (for example, see \cite{MR1064107} and \cite{MR0472984}).
The structures of partially ordered sets of support $\tau$-tilting modules over algebras appearing in Erdmann's lists were calculated in \cite{MR3856858}.
Moreover, in the same paper, they proved the following theorem.

\begin{proposition}[{See the proof of \cite[Theorem 15]{MR3856858}}]\label{Eisele Theorem}
    Let $P$ be a $p$-group and $B$ a block of $kG$. Then we get an isomorphism $\stautilt B \cong \stautilt( B\otimes_k kP)$ as partially ordered sets.
\end{proposition}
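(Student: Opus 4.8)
The plan is to translate the statement into one about two-term silting complexes and then exploit the exact functor $-\otimes_k kP$. Since $P$ is a $p$-group and $\operatorname{char}k=p$, the algebra $kP$ is local (hence basic) and symmetric, with $\dim_k kP=|P|$ and residue field $k$; consequently $B\otimes_k kP$ is again a finite dimensional symmetric algebra (the tensor product of symmetric algebras is symmetric) whose simple modules are those of $B$, so $|B\otimes_k kP|=|B|$. By the bijections recalled in the introduction---in particular $\stautilt\Lambda\cong\twosilt\Lambda$ as posets---and the identification of two-term silting with two-term tilting complexes over symmetric algebras, it suffices to construct a poset isomorphism $\twosilt B\cong\twosilt(B\otimes_k kP)$.

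To this end I would set up the comparison functor. The functor $-\otimes_k kP\colon B\lmod\to(B\otimes_k kP)\lmod$ is exact and sends $\add B$ to $\add(B\otimes_k kP)$, while restriction along the algebra map $B\to B\otimes_k kP$, $b\mapsto b\otimes1$, is exact and sends $\add(B\otimes_k kP)$ into $\add B$ (as $B\otimes_k kP$ is free of rank $|P|$ as a left $B$-module); hence both induce triangulated functors between $K^b(B\lproj)$ and $K^b((B\otimes_k kP)\lproj)$. The decisive point is the natural, composition-compatible isomorphism
\[
\Hom_{B\otimes_k kP}(X\otimes_k kP,\;Y\otimes_k kP)\;\cong\;\Hom_B(X,Y)\otimes_k kP\qquad(X,Y\in\add B).
\]
Because $-\otimes_k kP$ is exact over $k$, passing to Hom-complexes and to their cohomology gives, for all two-term complexes $T,T'$ with terms in $\add B$ and all $i$,
\[
\Hom_{K^b}\big(T\otimes_k kP,\,(T'\otimes_k kP)[i]\big)\;\cong\;\Hom_{K^b}(T,T'[i])\otimes_k kP,
\]
the left-hand side computed in $K^b((B\otimes_k kP)\lproj)$ and the right in $K^b(B\lproj)$.

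From this almost everything is formal. Taking $i=1$ shows that $T$ is two-term presilting iff $T\otimes_k kP$ is; taking $i=0$ shows $\End_{K^b}(T_j\otimes_k kP)\cong\End_{K^b}(T_j)\otimes_k kP$, which is local whenever $\End_{K^b}(T_j)$ is (here $k=\overline k$), so $-\otimes_k kP$ preserves indecomposability, and, together with the restriction functor---which sends $T_j\otimes_k kP$ to $T_j^{\oplus|P|}$---it preserves non-isomorphism of indecomposable summands. Since a two-term presilting complex is silting exactly when its number of indecomposable summands equals the number of simple modules, $T\mapsto T\otimes_k kP$ is a well-defined injection $\twosilt B\hookrightarrow\twosilt(B\otimes_k kP)$; and it is an order embedding, because for two-term silting complexes $T\ge S$ is equivalent to $\Hom_{K^b}(T,S[1])=0$, a condition preserved and reflected by the displayed isomorphism.

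The \emph{remaining, and main, obstacle} is surjectivity. I would obtain it from compatibility of $-\otimes_k kP$ with silting mutation: the left (resp.\ right) mutation of $T$ at an indecomposable summand $T_i$ is built from a minimal left (resp.\ right) $\add(T/T_i)$-approximation and its mapping cone, and since $-\otimes_k kP$ is exact, additive, and carries approximations to approximations, one checks that $(\mu_i^{\pm}T)\otimes_k kP$ and $\mu_i^{\pm}(T\otimes_k kP)$ have the same underlying basic two-term silting complex; the only delicate point is that $-\otimes_k kP$ need not preserve left-minimality of a map, but the extra summands it produces lie in $\add((T/T_i)\otimes_k kP)$ and are absorbed. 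Thus $T\mapsto T\otimes_k kP$ is a morphism of exchange graphs that sends the maximal element $B$ to $B\otimes_k kP$ and commutes with mutation; since the exchange graph of $\twosilt(B\otimes_k kP)$ is connected, this map is onto, hence the desired poset isomorphism. (Equivalently, surjectivity means lifting an arbitrary two-term silting complex over $B\otimes_k kP$ to one of the form $T\otimes_k kP$; the subtlety there is that its differential need not be $d\otimes1$, and an alternative is to invoke a reduction theorem for $\tau$-rigid modules for the nilpotent ideal $B\otimes_k\Rad(kP)$ of $B\otimes_k kP$ with quotient $B$.)
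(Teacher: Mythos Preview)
The paper does not give its own proof of this proposition; it simply cites the reduction theorem of Eisele--Janssens--Raedschelders. The cited approach is the one you mention only parenthetically at the very end: one passes from $B\otimes_k kP$ to its quotient by the nilpotent ideal $B\otimes_k\Rad(kP)$, which is $B$, and applies the EJR reduction theorem for $\tau$-rigid modules to obtain the poset isomorphism directly. Your primary route via the functor $-\otimes_k kP$ on two-term silting complexes is genuinely different.

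Your order-embedding argument is correct and clean: the Hom-tensor identity does exactly what you say, and the preservation of presilting, indecomposability, basicness, and the partial order all follow. Mutation compatibility is also fine, for the reason you give.

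The gap is in surjectivity. You assert that ``the exchange graph of $\twosilt(B\otimes_k kP)$ is connected'' and deduce that the mutation-closed image containing the maximum must be everything. Connectedness of $\mathcal{H}(\stautilt\Lambda)$ is not known for arbitrary finite dimensional (even symmetric) algebras; it is a well-known open problem. Note that the paper's own machinery (Lemma~4.1 and Proposition~2.8) only upgrades the embedding to an isomorphism under a $\tau$-tilting finiteness hypothesis, precisely because one cannot invoke connectedness in general. Since Proposition~1.1 makes no finiteness assumption on $B$, your argument as written does not close.

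What actually rescues the statement in full generality is the EJR reduction theorem you allude to: it gives a direct bijection $\stautilt(B\otimes_k kP)\to\stautilt B$ induced by the quotient functor, with an explicit inverse, bypassing connectedness entirely. So your instinct in the final parenthesis is right, but it is not an ``alternative'' --- it is the needed ingredient, and it is exactly the approach the paper is citing.
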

\noindent Note that $B \otimes _ k kP$ is a block of $k{\widetilde G}$ where ${\widetilde G} = G \times P $, so that Proposition \ref{Eisele Theorem} means that the calculation of $\stautilt(B \otimes _ k kP)$ can be reduced to that of $\stautilt B$.
Inspired by this result, we ask the following question:

\begin{itemize}[leftmargin=-1pt]
    \item[ ]\textit{Under what conditions are the posets of support $\tau$-tilting modules over two blocks isomorphic?}
\end{itemize}

\noindent In this paper, we will use a modular representation theoretical approach to address the question above and show that similar consequence of Proposition \ref{Eisele Theorem} holds under more general situations.

In order to state our main results, we need some notation.
Let ${\widetilde G}$ be a finite group with normal subgroup $G$, $B$ a block of $kG$ and ${\widetilde B}$ a block of $k{\widetilde G}$ covering $B$ (i.e. the condition $1_B1_{\widetilde B}\neq 0$ holds). We denote by $\inertiagp_{\widetilde G}(B)$ the inertial group of the block $B$ in ${\widetilde G}$. We say that a $kG$-module $U$ is $\inertiagp_{\widetilde G}(B)$-invariant if $xU \cong U$ as $kG$-modules for any $x\in \inertiagp_{\widetilde G}(B)$. We denote by $\induc_G^{\widetilde G}:=k{\widetilde G}\otimes_{kG}-$ the induction functor from $kG\lmod$ to $k{\widetilde G}\lmod$.
\begin{theorem}[{See Theorem \ref{Main Theorem: poset struc}}]\label{Inrtro: Main theorem poset iso}
    Assume that ${\widetilde G}/G$ is a $p$-group and that $B$ satisfies the following conditions:
    \begin{enumerate}[label=(\Roman*)
            ,font=\rm]%[(I)]
        \item any indecomposable $B$-module is $\inertiagp_{\widetilde G}(B)$-invariant.\label{Main Theorem: poset struc: inertia invariant}
        \item the block $B$ is $\tau$-tilting finite (i.e. $\#\stautilt B < \infty$).\label{Main Theorem: poset struc: tau-tilt finite}
    \end{enumerate}
    Then the induction functor $\induc_G^{\widetilde G}$ induces an isomorphism from $\stautilt B$ to $\stautilt {\widetilde B}$ of partially ordered sets.
\end{theorem}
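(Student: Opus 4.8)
The plan is to work with the poset $\twotilt(-)$ of two-term tilting complexes, which is identified with $\stautilt(-)$ since both blocks are symmetric algebras, and to analyse the induction functor on $K^b(kG\lproj)$. The functor $\induc_G^{\widetilde G}=k\widetilde G\otimes_{kG}-$ is exact and preserves projectives, so it induces an exact triangle functor $K^b(kG\lproj)\to K^b(k\widetilde G\lproj)$ sending two-term complexes to two-term complexes; moreover, since $G\trianglelefteq\widetilde G$, it is left adjoint to $\restr^{\widetilde G}_G$ and satisfies the Mackey formula $\restr^{\widetilde G}_G\induc_G^{\widetilde G}M\cong\bigoplus_{x\in\widetilde G/G}xM$. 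The first step is a Fong--Reynolds reduction: $\widetilde B$ is Morita equivalent, via induction from $\inertiagp_{\widetilde G}(B)$, to the unique block of $k\inertiagp_{\widetilde G}(B)$ covering $B$; as a Morita equivalence induces a poset isomorphism of support $\tau$-tilting modules and $\induc_G^{\widetilde G}=\induc_{\inertiagp_{\widetilde G}(B)}^{\widetilde G}\circ\induc_G^{\inertiagp_{\widetilde G}(B)}$, we may assume $\inertiagp_{\widetilde G}(B)=\widetilde G$. Then $1_{\widetilde B}=1_B$; by hypothesis \ref{Main Theorem: poset struc: inertia invariant} every $kG$-module in $B$ is $\widetilde G$-invariant, so $\restr^{\widetilde G}_G\induc_G^{\widetilde G}M\cong M^{\oplus m}$ with $m=[\widetilde G:G]$; and Clifford theory, together with $\widetilde G/G$ being a $p$-group in characteristic $p$, shows that each simple $B$-module extends uniquely to $\widetilde G$, so $B$ and $\widetilde B$ have the same number $n$ of simple modules.

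Next I would verify that $\induc_G^{\widetilde G}$ sends $\twotilt B$ into $\twotilt\widetilde B$ and is an order embedding. Every two-term tilting complex $T$ over $B$ is isomorphic to the direct sum of the minimal projective presentation of a $B$-module and a shifted projective module; since conjugation by $x\in\widetilde G$ preserves this shape and, by hypothesis \ref{Main Theorem: poset struc: inertia invariant}, fixes the isomorphism class of every $B$-module, we get $xT\cong T$ in $K^b(kG\lproj)$ for all $x$. Hence, for two-term tilting complexes $T,T'$ over $B$, the adjunction and the Mackey formula give
\[
\Hom_{K^b(k\widetilde G\lproj)}(\induc_G^{\widetilde G}T,\induc_G^{\widetilde G}T'[1])\cong\bigoplus_{x\in\widetilde G/G}\Hom_{K^b(kG\lproj)}(T,xT'[1])\cong\Hom_{K^b(kG\lproj)}(T,T'[1])^{\oplus m}.
\]
Taking $T'=T$ shows $\induc_G^{\widetilde G}T$ is presilting; and because Green's indecomposability theorem makes $\induc_G^{\widetilde G}$ preserve indecomposability, while restricting together with $\restr_G^{\widetilde G}\induc_G^{\widetilde G}\cong(-)^{\oplus m}$ makes it reflect isomorphism between indecomposables, $\induc_G^{\widetilde G}T$ has exactly $n$ pairwise non-isomorphic indecomposable summands, hence is a two-term tilting complex. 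Using that the tilting order on two-term complexes is $T\ge T'\iff\Hom_{K^b}(T,T'[1])=0$, the displayed isomorphism shows that the resulting map $\Phi\colon\twotilt B\to\twotilt\widetilde B$ both preserves and reflects the order; in particular $\Phi$ is injective.

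It then remains to prove that $\Phi$ is surjective, which is where hypothesis \ref{Main Theorem: poset struc: tau-tilt finite} is used. I would show that $\Phi$ is compatible with silting mutation, $\Phi(\mu_X^-T)\cong\mu_{\induc_G^{\widetilde G}X}^-(\Phi T)$ for an indecomposable summand $X$ of $T$: as $\induc_G^{\widetilde G}$ is triangulated, it suffices that it carries a minimal left approximation of $X$ by the additive closure of the remaining summands of $T$ to a minimal left approximation over $\widetilde B$; the approximation property follows from the adjunction and $\restr_G^{\widetilde G}\induc_G^{\widetilde G}\cong(-)^{\oplus m}$, and the minimality from Green's theorem and the fact that $\induc_G^{\widetilde G}$ transports indecomposable decompositions without change of multiplicities. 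Since $\Phi$ also sends the maximum $kG$ of $\twotilt B$ to the maximum $k\widetilde G$ of $\twotilt\widetilde B$ (the images of the indecomposable projective $B$-modules exhaust the indecomposable projective $\widetilde B$-modules, by the equality of the numbers of simple modules), the image of $\Phi$ is a subset of $\twotilt\widetilde B$ that contains the maximum and is closed under mutation in both directions, hence a union of connected components of the Hasse quiver of $\twotilt\widetilde B$; this union is finite, being the image of the finite set $\twotilt B$. Invoking the fact that a finite connected component of the support $\tau$-tilting quiver must be the whole quiver (so that $\widetilde B$ is itself $\tau$-tilting finite), we conclude that $\Phi$ is surjective. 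Finally, an order-preserving and order-reflecting bijection is an isomorphism of posets, and translating back along $\twotilt(-)\cong\stautilt(-)$ gives the theorem.

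The main obstacle is the surjectivity of $\Phi$: the inclusion $\twotilt B\hookrightarrow\twotilt\widetilde B$ and the order statements are fairly formal consequences of Green's indecomposability theorem, the adjunction, and hypothesis \ref{Main Theorem: poset struc: inertia invariant}, and only the surjectivity genuinely uses hypothesis \ref{Main Theorem: poset struc: tau-tilt finite}. The delicate points there are that $\induc_G^{\widetilde G}$ must be checked to commute with mutation \emph{on the nose}, including the minimality of the approximations involved, and that one invokes a connectedness result for the support $\tau$-tilting quiver of $\widetilde B$, an algebra whose $\tau$-tilting finiteness is not known in advance.
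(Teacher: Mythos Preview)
Your proof is correct and follows the same overall arc as the paper (Fong--Reynolds reduction to the inertial group, Green's indecomposability, then the finite-connected-component argument of \cite{MR3187626}), but the execution differs in a meaningful way. The paper stays on the $\stautilt$ side throughout: it isolates an abstract ``nice functor'' lemma (Lemma~\ref{Lemma nice functor}) whose hypotheses are that $F$ preserve indecomposability, projectivity and $\tau$-rigidity, that it kill the Hom-condition for support pairs, and that it be injective on indecomposables; these are then checked one by one for $\induc_G^{\inertiagp_{\widetilde G}(B)}$ via Lemmas~\ref{Lemma; tau rigid iff}--\ref{induced iso}. Mutation compatibility is obtained not through approximations but through almost complete support $\tau$-tilting pairs: two distinct completions of $(L,Q)$ are sent to two distinct completions of $(F(L),F(Q))$, hence to a mutation pair. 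The conclusion then comes from the regularity of the Hasse quiver (both sides are $|B|$-regular) rather than from an explicit check that every neighbour of an image vertex is hit.

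Your route via $\twotilt$ has the pleasant feature that the single computation
\[
\Hom_{K^b}(\induc T,\induc T'[1])\cong\Hom_{K^b}(T,T'[1])^{\oplus m}
\]
simultaneously delivers presilting, order preservation \emph{and} order reflection; the paper gets order preservation from exactness (epimorphisms) and never proves order reflection directly, deducing the poset isomorphism instead from the Hasse-quiver isomorphism. On the other hand, the paper's almost-complete-pair argument for mutation compatibility is shorter than yours via approximations: you do not actually need minimality of the induced approximation (any left approximation computes the mutation after basicification), and your justification for minimality is the weakest link in an otherwise clean argument. Two cosmetic points: the maximum of $\twotilt B$ is $B$, not $kG$ (and likewise for $\widetilde B$); and your claim that $xT\cong T$ for two-term complexes $T$ is correct but deserves the one-line justification you sketch, since hypothesis~\ref{Main Theorem: poset struc: inertia invariant} is stated for modules.
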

If $B$ has a cyclic defect group, then the conditions \ref{Main Theorem: poset struc: inertia invariant} and \ref{Main Theorem: poset struc: tau-tilt finite} hold for $B$ automatically (see Lemma \ref{Cyclicdefect p-power index}).
Moreover, in that case the block $B$ is a Brauer tree algebra or simple algebra, thus the number of elements in $\stautilt B$ is equal to $\binom{2e}{e}$, where $e$ is the number of isomorphism classes of simple $B$-modules and $\binom{2e}{e}$ means the binomial coefficient (\cite{ASASHIBA2020119}, \cite{aokibinomtautilt}).
Combining Theorem \ref{Inrtro: Main theorem poset iso} with these facts, we get the following.
\begin{theorem}\label{Main theorem brauer tree}
    Assume that ${\widetilde G}/G$ is a $p$-group. If $B$ has a cyclic defect group, then $\stautilt B$ and $\stautilt {\widetilde B}$ are isomorphic partially ordered sets. In particular, we get $\#\stautilt {\widetilde B}=\binom{2e}{e}$ where $e$ is the number of isomorphism classes of simple $B$-modules.
\end{theorem}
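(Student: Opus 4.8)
The plan is to obtain Theorem~\ref{Main theorem brauer tree} as a direct consequence of Theorem~\ref{Inrtro: Main theorem poset iso}, together with the known enumeration of support $\tau$-tilting modules over Brauer tree algebras. Concretely, I would first check that a block $B$ with cyclic defect group automatically satisfies the two hypotheses \ref{Main Theorem: poset struc: inertia invariant} and \ref{Main Theorem: poset struc: tau-tilt finite} of Theorem~\ref{Inrtro: Main theorem poset iso}, then apply that theorem to transport the poset structure along the induction functor $\induc_G^{\widetilde G}$, and finally substitute the value $\#\stautilt B=\binom{2e}{e}$.

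For the hypotheses: a block with cyclic defect group is of finite representation type --- it is either a simple algebra or a Brauer tree algebra by \cite[Chapter~5]{MR860771} --- so in particular $\#\stautilt B<\infty$, which is \ref{Main Theorem: poset struc: tau-tilt finite}. Condition \ref{Main Theorem: poset struc: inertia invariant}, that every indecomposable $B$-module is $\inertiagp_{\widetilde G}(B)$-invariant, is exactly the content of Lemma~\ref{Cyclicdefect p-power index}; I would argue it by using that ${\widetilde G}/G$ is a $p$-group (so the inertial quotient $\inertiagp_{\widetilde G}(B)/G$ is a $p$-group) together with Green's indecomposability theorem and the explicit description of the indecomposable modules over a block with cyclic defect group (via Green correspondence and Brauer tree combinatorics), to conclude that conjugation by an element of $\inertiagp_{\widetilde G}(B)$ fixes the isomorphism class of each indecomposable $B$-module.

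Once both hypotheses are in place and ${\widetilde G}/G$ is a $p$-group, Theorem~\ref{Inrtro: Main theorem poset iso} gives an isomorphism of partially ordered sets $\stautilt B\xrightarrow{\ \sim\ }\stautilt {\widetilde B}$ induced by $\induc_G^{\widetilde G}$. Writing $e$ for the number of isomorphism classes of simple $B$-modules, one then has $\#\stautilt B=\binom{2e}{e}$: if $B$ is a Brauer tree algebra (whose tree has $e$ edges) this is \cite{ASASHIBA2020119} and \cite{aokibinomtautilt}, while if $B$ is simple then $e=1$ and $\#\stautilt B=2=\binom{2}{1}$, so the formula holds in both cases. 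Since a poset isomorphism is in particular a bijection, $\#\stautilt {\widetilde B}=\binom{2e}{e}$, and the proof is complete. I do not expect any genuine obstacle inside this argument: all the real work lies in the two cited inputs --- Theorem~\ref{Inrtro: Main theorem poset iso}, whose proof must show via Clifford theory and Green's indecomposability theorem that $\induc_G^{\widetilde G}$ restricts to an order-preserving bijection between $\stautilt B$ and $\stautilt {\widetilde B}$, and Lemma~\ref{Cyclicdefect p-power index} --- and once these are granted the deduction above is purely formal.
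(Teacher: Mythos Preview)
Your proposal is correct and matches the paper's own proof, which simply states that Theorem~\ref{Main theorem brauer tree} is immediate from Theorem~\ref{Main Theorem: poset struc} (equivalently Theorem~\ref{Inrtro: Main theorem poset iso}) and Lemma~\ref{Cyclicdefect p-power index}, together with the known count $\binom{2e}{e}$ for Brauer tree algebras. One minor remark: your sketched justification of Lemma~\ref{Cyclicdefect p-power index} via Green's indecomposability theorem is not how the paper actually proves that lemma---the paper instead uses Lemma~\ref{lemma number of simple} for the simple modules and then an induction on composition length exploiting that $\dim_k\Ext^1_b(V,S)=1$ for string modules over a Brauer tree algebra---but since you are ultimately citing the lemma as an input, this does not affect the deduction of Theorem~\ref{Main theorem brauer tree}.
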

Using the correspondence between support $\tau$-tilting modules and two-term tilting complexes given by \cite{MR3187626}, we get the following.

\begin{corollary}[See Theorem \ref{Induction 2tilt}]
    Assume that ${\widetilde G}/G$ is a $p$-group and that $B$ satisfies the conditions \ref{Main Theorem: poset struc: inertia invariant} and \ref{Main Theorem: poset struc: tau-tilt finite} in Theorem \ref{Inrtro: Main theorem poset iso}. Then the induction functor $\induc_G^{\widetilde G}$ induces an isomorphism $\twotilt B \cong \twotilt {\widetilde B}$ of partially ordered sets which commutes the following diagram
    \begin{equation*}
        \xymatrix{
        \stautilt B \ar[r]^-\sim_-{\induc_G^{\widetilde G}}\ar[d]_-[@]{\sim} \ar@{}[dr]|\circlearrowleft &\stautilt {\widetilde B} \ar[d]^-[@]{\sim}\\
        \twotilt B \ar[r]_-\sim^-{\induc_G^{\widetilde G}}& \twotilt {\widetilde B}
        }
    \end{equation*}
    of partially ordered sets, where the vertical maps are isomorphisms given by the above correspondence.
\end{corollary}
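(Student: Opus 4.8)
The plan is to deduce the statement from the poset isomorphism $\induc_G^{\widetilde G}\colon \stautilt B \xrightarrow{\sim} \stautilt{\widetilde B}$ of Theorem~\ref{Inrtro: Main theorem poset iso} together with the Adachi--Iyama--Reiten correspondence, reducing everything to a single compatibility. Recall from \cite{MR3187626} that the assignment $M \mapsto T_M$, where $T_M = (P_1 \xrightarrow{f} P_0) \oplus P^M[1]$ with $P_1 \xrightarrow{f} P_0 \to M \to 0$ a minimal projective presentation over $B$ and $(M,P^M)$ the associated support $\tau$-tilting pair, is a poset isomorphism $\stautilt B \xrightarrow{\sim} \twosilt B$ with inverse $T \mapsto H^0(T)$; since $B$ is symmetric, $\twosilt B = \twotilt B$ as recalled in the introduction, and likewise for ${\widetilde B}$. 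Also, being additive and exact on modules, $\induc_G^{\widetilde G}$ induces a functor $K^b(B\lproj) \to K^b({\widetilde B}\lproj)$. I claim that for every $M \in \stautilt B$ one has
\begin{equation*}
\induc_G^{\widetilde G}(T_M) \cong T_{\induc_G^{\widetilde G}(M)} \quad \text{in } K^b({\widetilde B}\lproj).
\end{equation*}
Granting this, the corollary is formal: every $T \in \twotilt B$ is isomorphic to $T_{H^0(T)}$, so $\induc_G^{\widetilde G}(T) \cong T_{\induc_G^{\widetilde G}(H^0(T))}$ lies in $\twotilt{\widetilde B}$, which gives a well-defined map $\twotilt B \to \twotilt{\widetilde B}$; the square commutes because both composites send $M \in \stautilt B$ to the isomorphism class of $T_{\induc_G^{\widetilde G}(M)}$; and this map is a poset isomorphism because the other three maps in the square are.

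To prove the claim I would first record that $\induc_G^{\widetilde G}$ is exact (as $k{\widetilde G}$ is free over $kG$), that it sends projective $B$-modules to projective ${\widetilde B}$-modules (using that there is a unique block ${\widetilde B}$ of $k{\widetilde G}$ covering $B$ since ${\widetilde G}/G$ is a $p$-group), and that it preserves projective covers. For the last point: as $G \triangleleft {\widetilde G}$, the ideal $J(kG)k{\widetilde G}$ is a nilpotent two-sided ideal of $k{\widetilde G}$, hence $J(kG) \subseteq J(k{\widetilde G})$ and therefore $\induc_G^{\widetilde G}(\Rad N) \subseteq \Rad(\induc_G^{\widetilde G} N)$ for every $kG$-module $N$; so applying the exact functor $\induc_G^{\widetilde G}$ to a projective cover again yields a projective cover. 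Applying $\induc_G^{\widetilde G}$ to $P_1 \xrightarrow{f} P_0 \to M \to 0$ then produces, by exactness together with preservation of the projective covers $P_0 \twoheadrightarrow M$ and $P_1 \twoheadrightarrow \ker(P_0 \to M)$, the minimal projective presentation of $\induc_G^{\widetilde G}(M)$ over ${\widetilde B}$. For the shift summand I would invoke the bijection between the simple $B$-modules and the simple ${\widetilde B}$-modules coming out of the proof of Theorem~\ref{Inrtro: Main theorem poset iso} (via Clifford theory and Green's indecomposability theorem: $S \mapsto \sigma(S)$ with $\sigma(S)$ the simple top of $\induc_G^{\widetilde G} S$, so that $\induc_G^{\widetilde G} P_S \cong P_{\sigma(S)}$), together with the compatibility of $\induc_G^{\widetilde G}$ with the torsion classes $\Fac M \leftrightarrow \Fac(\induc_G^{\widetilde G} M)$ established there; since the projective part $P^N$ of a support $\tau$-tilting pair $(N,P^N)$ is characterised by $P_S \in \add P^N \iff S \notin \Fac N$, this gives $\induc_G^{\widetilde G} P^M \cong P^{\induc_G^{\widetilde G}(M)}$. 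Assembling the two parts yields $\induc_G^{\widetilde G}(T_M) \cong T_{\induc_G^{\widetilde G}(M)}$.

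The hard part is the compatibility with the shift (projective) summand. Preservation of minimal projective presentations is a soft consequence of exactness plus the inclusion $\induc_G^{\widetilde G}(\Rad N) \subseteq \Rad(\induc_G^{\widetilde G} N)$, but identifying $\induc_G^{\widetilde G} P^M$ with $P^{\induc_G^{\widetilde G}(M)}$ relies on knowing that induction transports $\Fac M$ to $\Fac(\induc_G^{\widetilde G} M)$ along the induced bijection on simple modules. This is precisely the modular representation theoretic content already built into Theorem~\ref{Inrtro: Main theorem poset iso}, so in practice I would carry out this step by extracting the relevant statements from that proof rather than re-deriving them, and the remaining verification is then routine.
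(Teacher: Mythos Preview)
Your overall strategy---prove $\induc_G^{\widetilde G}(T_M)\cong T_{\induc_G^{\widetilde G} M}$ by checking separately that induction preserves the minimal projective presentation of $M$ and the projective part $P^M$ of the pair---is precisely the paper's argument for Corollary~\ref{Induction 2tilt}. Your radical-inclusion argument for preservation of projective covers is a valid alternative to the paper's appeal to Lemma~\ref{induction functor projective cover} (which instead goes through Green's indecomposability theorem).

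The handling of the projective part, however, contains a genuine error and is also more circuitous than necessary. The characterisation ``$P_S\in\add P^N \iff S\notin\Fac N$'' is false in general: for $\Lambda=kA_2$ with arrow $1\to 2$, the module $N=P_1\oplus S_1$ is $\tau$-tilting with $P^N=0$, yet $S_2\notin\Fac N=\add\{P_1,S_1\}$. The correct characterisation is $P_S\in\add P^N \iff \Hom_\Lambda(P_S,N)=0$, i.e.\ $S$ is not a composition factor of $N$. More to the point, the proof of Theorem~\ref{Main Theorem: poset struc} never passes through $\Fac$ at all: the isomorphism $\stautilt B\xrightarrow{\sim}\stautilt{\widetilde B}$ is constructed directly at the level of \emph{pairs} in Lemma~\ref{Lemma nice functor}, where the hypotheses (supplied by Theorem~\ref{Green's indecomposability theorem}, Lemma~\ref{Lemma; tau rigid iff}, Lemma~\ref{tau rigid pair hom zero} and Lemma~\ref{induced iso}) guarantee that $(\induc_G^{\widetilde G} M,\induc_G^{\widetilde G} P^M)$ is again a support $\tau$-tilting pair. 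Uniqueness of the projective part (Proposition~\ref{stauTilt module and pair}) then gives $\induc_G^{\widetilde G} P^M\cong P^{\induc_G^{\widetilde G} M}$ immediately, with no torsion-class bookkeeping. If you replace your $\Fac$ paragraph by this one-line observation, your proof becomes the paper's.
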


In this paper, we use the following notation.
Modules mean finitely generated left modules and complexes mean cochain complexes.
For  a finite dimensional algebra $\Lambda$ over a field $k$ and a $\Lambda$-module $M$, we denote by $\Rad(M)$ the Jacobson radical of $M$, by $P(M)$ the projective cover of $M$, by $\Omega(M)$ the syzygy of $M$ and $\tau M$ the Auslander--Reiten translate of $M$.
We denote by $\Lambda\lmod$ the module category of $\Lambda$ and by $K^b(\Lambda\lproj)$ the homotopy category consisting of bounded complexes of projective $\Lambda$-modules.
For an object $X$ in $\Lambda\lmod$ (or of $K^b(\Lambda\lproj)$),
we denote by $\add X$ the full subcategory of $\Lambda\lmod$ (or of $K^b(\Lambda\lproj)$, respectively) whose objects are finite direct sums of direct summands of $X$.
We say that $X$ is basic if any two indecomposable direct summands of $X$ are non-isomorphic.
For $\Lambda$-modules $U$ and $U'$, we write $U\mid U'$ if $U$ is isomorphic to a direct
summand of $U'$ as a $\Lambda$-module.

This paper organized as follows. In Section \ref{section:Preliminaries for tau tilting theory} we recall some definitions and properties of $\tau$-tilting theory. In Section \ref{Preliminaries: Modular rep-Theory} we give basic definitions and propositions of modular representation theory of finite groups. In Section \ref{section: proof of the main theorems} we prove the main theorems and its corollary, and in Section \ref{section:Applications} we give some applications and examples.

\section{Preliminaries for $\tau$-tilting theory}\label{section:Preliminaries for tau tilting theory}
In this section, let $k$ be an algebraically closed field and $\Lambda$ a finite dimensional $k$-algebra. We denote by $\tau$ the Auslander--Reiten translation. For a $\Lambda$-module $M$, we denote by $|M|$ the number of isomorphism classes of indecomposable direct summands of $M$.
\subsection{Support $\tau$-tilting module and mutation}\label{subsection: Preliminaries: Support tau tilt and mutation}
In this subsection, we recall some definitions and basic properties of support $\tau$-tilting modules.
\begin{definition}[{\cite[Definition 0.1]{MR3187626}}]
    Let $M$ be a $\Lambda$-module.
    \begin{enumerate}
        \item We say that $M$ is {\it $\tau$-rigid} if $\Hom_{\Lambda}(M,\tau M)=0$.
        \item We say that $M$ is {\it $\tau$-tilting} if $M$ is a $\tau$-rigid module and $|M|=|\Lambda|$.
        \item We say that $M$ is {\it support $\tau$-tilting} if there exists an idempotent $e$ of $\Lambda$ such that $M$ is a $\tau$-tilting $\Lambda/\Lambda e\Lambda $-module.
    \end{enumerate}
\end{definition}

We remark that any $\tau$-tilting module is a support $\tau$-tilting module because $e=0$ is an idempotent of $\Lambda$ satisfying the condition 3.

\begin{definition}[{\cite[Definition 0.3]{MR3187626}}]
    Let $M$ be a $\Lambda$-module and $P$ a projective $\Lambda$-module.
    \begin{enumerate}
        \item We say that the pair $(M,P)$ is {\it $\tau$-rigid} if $M$ is $\tau$-rigid and $\Hom_\Lambda(P,M)=0$.
        \item We say that the pair $(M,P)$ is {\it support $\tau$-tilting} (or {\it almost complete support $\tau$-tilting}) if the pair $(M,P)$ is $\tau$-rigid and $|M|+|P|=|\Lambda|$ (or $|M|+|P|=|\Lambda|-1$, respectively).
    \end{enumerate}
\end{definition}

\begin{proposition}[{\cite[Proposition 2.3]{MR3187626}}]\label{stauTilt module and pair}
    Let $(M,P)$ be a pair with a $\Lambda$-module $M$ and a projective $\Lambda$-module $P$. Let $e$ be an idempotent of $\Lambda$  such that $\add P= \add \Lambda e$.
    \begin{enumerate}
        \item The pair $(M,P)$ is a $\tau$-rigid (or support $\tau$-tilting) pair if and only if $M$ is a $\tau$-rigid (or $\tau$-tilting, respectively) $\Lambda/\Lambda e \Lambda$-module.
        \item If $(M,P)$ and $(M,Q)$ are support $\tau$-tilting pairs for some projective $\Lambda$-module $Q$, then $\add P= \add Q$.
    \end{enumerate}
\end{proposition}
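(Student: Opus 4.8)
The plan is to reduce everything to a comparison of the Auslander--Reiten translations over $\Lambda$ and over $\bar\Lambda:=\Lambda/\Lambda e\Lambda$. First I would record two elementary consequences of $\add P=\add\Lambda e$. Using $\Hom_\Lambda(\Lambda e,M)\cong eM$, we get $\Hom_\Lambda(P,M)=0$ if and only if $eM=0$, equivalently $\Lambda e\Lambda$ annihilates $M$, equivalently $M$ is (inflated from) a $\bar\Lambda$-module. Counting simple modules, the simple $\bar\Lambda$-modules are precisely those $S_j$ with $P_j\notin\add\Lambda e$, so $|P|+|\bar\Lambda|=|\Lambda|$. Granting the key comparison below, part (1) then follows: in the $\tau$-rigid case immediately, and in the support $\tau$-tilting case because $|M|+|P|=|\Lambda|$ is equivalent to $|M|=|\bar\Lambda|$.

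The key comparison I need is: \emph{for a $\bar\Lambda$-module $M$, one has $\Hom_\Lambda(M,\tau_\Lambda M)=0$ if and only if $\Hom_{\bar\Lambda}(M,\tau_{\bar\Lambda}M)=0$.} To prove it I would use the standard identity
\[
\operatorname{coker}\bigl(\Hom_\Lambda(Q_0,X)\longrightarrow\Hom_\Lambda(Q_1,X)\bigr)\cong D\Hom_\Lambda(X,\tau_\Lambda N),
\]
where $D=\Hom_k(-,k)$, the module $N$ has minimal projective presentation $Q_1\to Q_0\to N\to0$, the displayed map is precomposition with $Q_1\to Q_0$, and $X$ is arbitrary (this comes from $\Hom_\Lambda(Q_i,X)\cong\Hom_\Lambda(Q_i,\Lambda)\otimes_\Lambda X$ together with $\tau N=D\operatorname{Tr}N$); in particular $\Hom_\Lambda(X,\tau_\Lambda N)=0$ exactly when that map is surjective, and the analogous statement holds over $\bar\Lambda$. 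Now fix a minimal projective presentation $Q_1\to Q_0\to M\to 0$ over $\Lambda$. For any $\bar\Lambda$-module $X$, adjunction gives $\Hom_\Lambda(Q_i,X)\cong\Hom_{\bar\Lambda}(\bar\Lambda\otimes_\Lambda Q_i,X)$ compatibly with $Q_1\to Q_0$, so the map above (over $\Lambda$) is surjective if and only if the corresponding map for $\bar\Lambda\otimes_\Lambda Q_1\to\bar\Lambda\otimes_\Lambda Q_0$ is surjective over $\bar\Lambda$.

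The crux --- and the step I expect to require the most care --- is the lemma that $\bar\Lambda\otimes_\Lambda(Q_1\to Q_0\to M\to 0)$ is \emph{again a minimal} projective presentation, now of $M$ over $\bar\Lambda$. It is a projective presentation by right-exactness of $\bar\Lambda\otimes_\Lambda-$ together with $\bar\Lambda\otimes_\Lambda M=M$. Since $M$ is a $\bar\Lambda$-module, $\Top M$ involves no $S_j$ with $P_j\in\add\Lambda e$, hence $Q_0$ has no direct summand in $\add\Lambda e$; this yields $\Lambda e\Lambda Q_0\subseteq\Rad Q_0$, so $\bar\Lambda\otimes_\Lambda Q_0\to M$ is still a projective cover. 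For the left-hand term one compares tops: $\bar\Lambda\otimes_\Lambda Q_1$ has the same top as the part of $Q_1$ lying outside $\add\Lambda e$, while the image of $\Lambda e\Lambda Q_0=\Lambda(eQ_0)$ in the semisimple module $\Top\Omega_\Lambda M$ is exactly its ``$e$-part''; it follows that $\bar\Lambda\otimes_\Lambda Q_1\to\Omega_{\bar\Lambda}M$ is a projective cover as well. Granting this minimality, the cokernel identity over $\bar\Lambda$ applies verbatim to $\bar\Lambda\otimes_\Lambda(Q_1\to Q_0\to M\to0)$, and taking $X=M$ gives $\Hom_\Lambda(M,\tau_\Lambda M)=0$ if and only if $\Hom_{\bar\Lambda}(M,\tau_{\bar\Lambda}M)=0$, which is the key comparison; hence part (1).

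For part (2), suppose $(M,P)$ and $(M,Q)$ are support $\tau$-tilting pairs, say with $\add P=\add\Lambda e$ and $\add Q=\add\Lambda f$. By part (1), $M$ is a $\tau$-tilting module both over $\Lambda/\Lambda e\Lambda$ and over $\Lambda/\Lambda f\Lambda$. I would then invoke the standard fact that a $\tau$-tilting module is sincere: this forces the set of composition factors of $M$ to equal $\{S_j:P_j\notin\add P\}$, and also to equal $\{S_j:P_j\notin\add Q\}$, so $\add P=\add Q$. (Alternatively one can avoid sincerity: $(M,P\oplus Q)$ is a $\tau$-rigid pair with $|M|+|P|=|\Lambda|=|M|+|Q|$, and the bound $|M|+|(\text{projective part})|\le|\Lambda|$ for $\tau$-rigid pairs forces $\add P=\add(P\oplus Q)=\add Q$.)
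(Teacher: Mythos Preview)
The paper does not give its own proof of this proposition: it is stated as a preliminary and attributed verbatim to \cite[Proposition~2.3]{MR3187626}. So there is nothing in the present paper to compare your argument against.

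That said, your argument is correct and is essentially the one in the original source. A couple of remarks on the one step you flagged as delicate, namely the minimality of $\bar\Lambda\otimes_\Lambda(Q_1\to Q_0\to M\to 0)$ over $\bar\Lambda$. Your treatment of $Q_0$ is fine. For $Q_1$, the key identity that makes the top comparison work cleanly is
\[
\Lambda e\Lambda\, Q_0=\Lambda e Q_0=\Lambda e\,\Omega_\Lambda M,
\]
which follows because $eQ_0\subseteq\Omega_\Lambda M$ (as $eM=0$) and $e$ is idempotent, so $eQ_0=e(eQ_0)\subseteq e\,\Omega_\Lambda M\subseteq eQ_0$. Hence the image of $\Lambda e\Lambda\,Q_0$ in $\Top\Omega_\Lambda M$ is exactly $\Lambda e\cdot\Top\Omega_\Lambda M$, the ``$e$-part'', as you asserted; this gives $\Top\Omega_{\bar\Lambda}M=(1-e)\Top\Omega_\Lambda M=\Top\bar Q_1$ and thus minimality. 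You are also right that minimality is genuinely needed here: with a non-minimal presentation the cokernel identity produces an extra summand $D\Hom(X,I)$ with $I$ injective, which would spoil the equivalence.

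Both arguments you give for part (2) are valid. The sincerity route is the one usually quoted; your alternative via the inequality $|M|+|P\oplus Q|\le|\Lambda|$ for $\tau$-rigid pairs is slightly slicker and avoids invoking the extra fact.
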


\begin{remark}
    We can associate any basic support $\tau$-tilting module to a basic support $\tau$-tilting pair bijectively by Proposition \ref{stauTilt module and pair}. Hence we can consider support $\tau$-tilting pairs instead of support $\tau$-tilting modules.
\end{remark}

The following proposition gives the definitions of mutations of support $\tau$-tilting pairs and support $\tau$-tilting modules. We recall that we say a pair $(V,Q)$ of a $\Lambda$-module $V$ and a projective $\Lambda$-module $Q$ is basic if $V$ and $Q$ are basic.

\begin{proposition}[{\cite[Theorem 2.18]{MR3187626}}] \label{support tau tilting mutation theorem}
    If $(V,Q)$ is a basic almost complete support $\tau$-tilting pair, then there exist exactly two basic support $\tau$-tilting pairs containing $(V,Q)$ as a direct summand.
\end{proposition}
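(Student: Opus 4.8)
The plan is to exhibit two completions of $(V,Q)$ explicitly and then rule out any third, working through the correspondence from \cite{MR3187626} between basic support $\tau$-tilting pairs and functorially finite torsion classes of $\Lambda\lmod$: a support $\tau$-tilting pair $(M,P)$ corresponds to the torsion class $\Fac M$, the order on pairs matches inclusion of torsion classes, and, by Proposition \ref{stauTilt module and pair}, the module part is recovered as a $\tau$-tilting module over the relevant quotient $\Lambda/\Lambda e\Lambda$. By Proposition \ref{stauTilt module and pair} again I identify $(V,Q)$ with the $\tau$-rigid pair it is, so $|V|+|Q|=|\Lambda|-1$.

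First I would establish existence of one completion by Bongartz completion: the full subcategory $\mathcal{T}_{1}=\{\,X\in\Lambda\lmod : \Hom_{\Lambda}(X,\tau V)=0 \text{ and } \Hom_{\Lambda}(Q,X)=0\,\}$ is a functorially finite torsion class containing $V$, whose associated support $\tau$-tilting pair $(M_{1},P_{1})$ is checked to have $(V,Q)$ as a direct summand. Since $|M_{1}|+|P_{1}|=|\Lambda|=(|V|+|Q|)+1$, the pair $(M_{1},P_{1})$ differs from $(V,Q)$ by exactly one indecomposable summand $Z$.

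Next I would produce the second completion from $Z$ by an exchange/co-Bongartz argument. When $Z$ is a module summand of $M_{1}$, I take a minimal left $\add V$-approximation $g\colon Z\to E$; according to whether $g$ is injective, the datum to adjoin to $(V,Q)$ is assembled from the cokernel of $g$, or else from a projective module, by the exchange-sequence recipe, and when the exchange takes place in the projective part one argues dually on $P_{1}$. In each case one checks directly that the outcome is a support $\tau$-tilting pair $(M_{2},P_{2})$ containing $(V,Q)$, with $|M_{2}|+|P_{2}|=|\Lambda|$ and new summand not isomorphic to $Z$, so $(M_{2},P_{2})\neq(M_{1},P_{1})$. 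These are technical but essentially routine verifications.

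The heart of the statement --- that $(M_{1},P_{1})$ and $(M_{2},P_{2})$ are the \emph{only} completions --- is where I expect the main obstacle. I would show that a support $\tau$-tilting pair $(M',P')$ contains $(V,Q)$ as a direct summand exactly when its torsion class $\Fac M'$ lies in the interval $[\Fac M_{2},\mathcal{T}_{1}]$ of the lattice of functorially finite torsion classes (using $\tau$-rigidity of $(M',P')$ to sandwich $\Fac M'$ between the two), and then prove that this interval consists of just its two endpoints: any functorially finite torsion class $\mathcal{T}$ with $\Fac M_{2}\subseteq\mathcal{T}\subsetneq\mathcal{T}_{1}$ already equals $\Fac M_{2}$. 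This is the assertion that deleting one indecomposable summand from a support $\tau$-tilting pair moves it exactly one step down the lattice; I would prove it by combining the $\Hom$- and $\Ext$-vanishing forced by $\tau$-rigidity of $(V,Q)$ with the minimality of the approximation $g$, which together pin down on each side the unique summand that may be adjoined. Assembling the two sides gives exactly two completions. (Equivalently one could route the whole argument through the bijection $\stautilt\Lambda\leftrightarrow\twosilt\Lambda$ of \cite{MR3187626} and invoke silting mutation, a two-term presilting complex with $|\Lambda|-1$ indecomposable summands having exactly two completions to a two-term silting complex.)
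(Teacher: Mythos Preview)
The paper does not prove this statement at all: it is quoted verbatim as \cite[Theorem 2.18]{MR3187626} and used as a black box to define mutation in Definition \ref{Definition stautilt pair mutation}. There is therefore no proof in the paper to compare your proposal against.

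That said, your outline is essentially the strategy of the original Adachi--Iyama--Reiten proof: Bongartz completion for existence of a maximal completion, an exchange sequence built from a minimal left $\add V$-approximation to produce the second completion, and a torsion-class interval argument (their Theorem 2.30) to show there is nothing in between. Your sketch is accurate in spirit; the place you flag as ``the main obstacle'' is indeed where the work lies, and in \cite{MR3187626} it is handled not by a direct lattice argument but by first characterising the two completions via the dichotomy $V\in\Fac X$ versus $V\notin\Fac X$ for the new indecomposable summand $X$ (their Theorem 2.30), which forces any third completion to coincide with one of the two already found. If you intend to write this up, that dichotomy is the cleanest route to uniqueness; the lattice-interval phrasing you propose is equivalent but requires essentially the same computation.
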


\begin{definition}[{\cite[Definition 2.19]{MR3187626}}]\label{Definition stautilt pair mutation}
    Let $M$ be a basic support $\tau$-tilting module, $(M,P)$ the corresponding basic support $\tau$-tilting pair and $X$ an indecomposable summand of either $M$ or $P$. For the basic almost complete support $\tau$-tilting pair $(V,Q)$ satisfying either $M \cong V\oplus X$ or $P \cong Q \oplus X$, by Proposition \ref{support tau tilting mutation theorem}, there exist a unique basic support $\tau$-tilting pair $(M',P')$ distinct to $(M,P)$ and having $(V,Q)$ as a direct summand.
    \begin{enumerate}
        \item We denote $(M',P')$ by $\mu_X(M,P)$ and call it a {\it mutation} of $(M,P)$ with respect to $X$.
        \item  We denote $M'$ by $\mu_X(M)$ and call it a  {\it mutation} of $M$ with respect to $X$.
    \end{enumerate}
\end{definition}

\subsection{Poset structures and connections with silting theory}

We denote by $\stautilt \Lambda$ the set of isomorphism classes of basic support $\tau$-tilting $\Lambda$-modules.
We can define a partial order on $\stautilt \Lambda$ as follows.
\begin{definitiontheorem}[{\cite[Theorem 2.7, 2.18, Definition-Proposition 2.28]{MR3187626}}]
    For $M, M' \in \stautilt \Lambda$, we write $M\geq M'$ if there exist a non-negative integer $r$ and an epimorphism $\varphi \colon M^{\oplus r}\rightarrow M'$. If $M$ and $M'$ are mutation of each other, then either $M > M'$ or $M < M'$ holds. Moreover, the following conditions are equivalent:
    \begin{enumerate}
        \item $M$ and $M'$ are mutation of each other, and $M>M'$.
        \item $M>M'$ and there is no support $\tau$-tilting $\Lambda$-module $L$ such that $M>L>M'$.
    \end{enumerate}
\end{definitiontheorem}

We denote $\mathcal{H}(\stautilt \Lambda)$ the Hasse quiver (Hasse diagram) for the partially ordered set $\stautilt \Lambda$. The theorem  above implies that any arrow in $\mathcal{H}(\stautilt \Lambda)$ corresponds to a support $\tau$-tilting mutation.
\begin{remark}\label{2020-03-20 15:46:59}
    The underlying graph of $\mathcal{H}(\stautilt \Lambda)$ is a $|\Lambda|$-regular graph because we can take $|\Lambda|$ sorts of mutations for each support $\tau$-tilting module.
\end{remark}
The next proposition plays an important role to prove our main theorems.

\begin{proposition}[{\cite[Corollary 2.38]{MR3187626}}]\label{connected component}
    If $\mathcal{H}(\stautilt \Lambda)$ has a finite connected component $\mathcal{C}$, then $\mathcal{H}(\stautilt \Lambda)$ coincides with $\mathcal{C}$.
\end{proposition}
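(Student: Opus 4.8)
The plan is to exploit the fact that $\stautilt\Lambda$ has a greatest element, namely $\Lambda$ itself: for every $M\in\stautilt\Lambda$ there is an epimorphism $\Lambda^{\oplus n}\twoheadrightarrow M$, so $\Lambda\ge M$ (and $(0,\Lambda)\le M$ trivially, so $(0,\Lambda)$ is the least element). The argument will have two parts. First I would show that the given finite connected component $\mathcal{C}$ necessarily contains $\Lambda$. Then I would show that, starting from $\Lambda$, every support $\tau$-tilting module can be reached by a finite descending chain of Hasse arrows that never leaves $\mathcal{C}$, which forces $\mathcal{C}$ to be all of $\stautilt\Lambda$ (so that $\mathcal{H}(\stautilt\Lambda)$ coincides with $\mathcal{C}$). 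Throughout I use that, being a connected component, $\mathcal{C}$ contains all $|\Lambda|$ Hasse-neighbours of each of its vertices (Remark~\ref{2020-03-20 15:46:59}).

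For the first part, a finite poset has a maximal element, so choose $M$ maximal in $\mathcal{C}$. Every Hasse-neighbour of $M$ lies in $\mathcal{C}$, and none of them can exceed $M$ by maximality; since the two ends of a Hasse arrow are comparable, all $|\Lambda|$ neighbours of $M$ are strictly below $M$, i.e.\ $M$ is a source of $\mathcal{H}(\stautilt\Lambda)$. By \cite{MR3187626}, however, $\Lambda$ is the unique source: if $M\ne\Lambda$ then $\Fac M\subsetneq\Lambda\lmod$, and $M$ then admits an upward mutation $\mu_X(M)>M$, which (being a mutation that is strictly larger than $M$) covers $M$, contradicting that $M$ is a source. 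Hence $M=\Lambda$, so $\Lambda\in\mathcal{C}$.

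For the second part, fix an arbitrary $N\in\stautilt\Lambda$; the goal is $N\in\mathcal{C}$. The engine is the local claim that \emph{if $M>N$ then there is a Hasse arrow $M\to M'$ with $M'\ge N$}. Granting it, one builds a chain $\Lambda=M_0>M_1>M_2>\cdots$ with every $M_i\in\mathcal{C}$ and every $M_i\ge N$: given $M_i\in\mathcal{C}$ with $M_i\ge N$, stop if $M_i=N$, and otherwise apply the local claim to produce $M_{i+1}$, a Hasse-neighbour of $M_i$ (hence $M_{i+1}\in\mathcal{C}$) with $M_{i+1}<M_i$ and $M_{i+1}\ge N$. The terms of the chain are pairwise distinct, so the finiteness of $\mathcal{C}$ forces it to terminate, which can only happen at a step where $M_i=N$. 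Thus $N\in\mathcal{C}$, and since $N$ was arbitrary, $\mathcal{C}=\stautilt\Lambda$, which is the assertion.

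The main obstacle is the local claim: from $M>N$ one must take a single downward Hasse step out of $M$ that does not overshoot $N$. I would prove it along the order-isomorphism $\stautilt\Lambda\cong\ftors\Lambda$, $M\mapsto\Fac M$: the functorially finite torsion classes covered by $\Fac M$ correspond to the downward mutations of $M$, and the claim becomes that $\Fac N$ is contained in at least one of them. An efficient route is $\tau$-tilting reduction: the interval $[N,M]$ of $\stautilt\Lambda$ is itself the poset of support $\tau$-tilting modules of a suitable smaller algebra $\Gamma$, with its Hasse quiver sitting inside $\mathcal{H}(\stautilt\Lambda)$, so that the local claim becomes the first part of the proof applied to $\Gamma$ — that the top element of $\stautilt\Gamma$, namely $M$, is not a source unless $\Gamma$ is the zero algebra, i.e.\ unless $N=M$. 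Making this reduction precise, in particular the compatibility of its Hasse arrows with those of $\mathcal{H}(\stautilt\Lambda)$, is the technical heart of the argument; the remainder is the elementary finiteness bookkeeping above.
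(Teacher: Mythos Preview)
The paper does not supply its own proof of this proposition; it simply quotes \cite[Corollary~2.38]{MR3187626}. Your two-step plan --- locate $\Lambda$ inside $\mathcal{C}$ via uniqueness of the source, then descend from $\Lambda$ to an arbitrary $N$ by iterating a ``local claim'' and invoking finiteness of $\mathcal{C}$ --- is exactly the shape of the argument in the original source, and your first step and your finiteness bookkeeping are correct.

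The gap is in your justification of the local claim. You assert that an arbitrary interval $[N,M]$ of $\stautilt\Lambda$ is order-isomorphic to $\stautilt\Gamma$ for some algebra $\Gamma$, but this is not what $\tau$-tilting reduction gives. Jasso's reduction identifies $\stautilt\Gamma$ with the set of support $\tau$-tilting pairs having a \emph{fixed} $\tau$-rigid pair $(U,Q)$ as a direct summand; that set is the interval between the co-Bongartz and Bongartz completions of $(U,Q)$, and a generic interval $[N,M]$ is not of this form. Concretely, for the path algebra of $A_2$ the interval from $0$ to $P_1\oplus S_1$ is a three-element chain, whose Hasse diagram is not $|\Gamma|$-regular for any $\Gamma$ and hence cannot be $\mathcal{H}(\stautilt\Gamma)$. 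The route that actually works, and the one taken in \cite{MR3187626}, is the torsion-class argument you mention first: from $\Fac N\subsetneq\Fac M$ one finds an indecomposable summand $X$ of the pair $(M,P)$ for which the left mutation at $X$ still has $\Fac N$ contained in its $\Fac$-class. That step uses the explicit description of how left mutation interacts with $\Fac$, not an appeal to reduction; once it is in hand, your descent argument goes through verbatim.
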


Now we recall the definition of silting complexes  which is a generalization of tilting complexes. The concept of silting complexes is originated from \cite{MR976638}, and recently there have been many papers on silting complexes starting with \cite{MR2927802}.
In particular, in \cite{MR3187626}, it is shown that there is a one-to-one correspondence between two-term silting complexes and support $\tau$-tilting modules.

\begin{definition}
    Let $P$ be a complex in $K^b(\Lambda\lproj)$.
    \begin{enumerate}
        \item We say that $P$ is {\it presilting} (or {\it pretilting}) if $\Hom_{K^b(\Lambda\lproj)}(P,P[i])=0$ for any $i>0$ (or for any $i \neq 0$, respectively).
        \item We say that $P$ is {\it silting} (or {\it tilting}) if it is presilting (or pretilting, respectively) and satisfies $\thick P=K^b(\Lambda\lproj)$ where $\thick P$ is the full subcategory of $K^b(\Lambda\lproj)$ generated by $\add P$ as triangulated category.
    \end{enumerate}
\end{definition}

\begin{definition}[{\cite[Definition 2.10]{MR2927802}}]
    For a finite dimensional algebra $\Lambda$, let $\silt \Lambda$ be the set of isomorphism classes of basic silting complexes in $K^b(\Lambda \lproj)$. We can define a partial order on $\silt \Lambda$ as follows: for $P,Q \in \silt \Lambda$, we write $P\geq Q$ if
    \begin{equation*}
        \Hom_{K^b(\Lambda\lproj)}(P,Q[i])=0,
    \end{equation*}
    for any $i>0$.
\end{definition}

\begin{definition}
    We say that a complex $P \in K^b(\Lambda\lproj)$ is {\it two-term} if $P^i = 0$ for all $i \neq 0, -1$. We denote by $\twosilt \Lambda$ the subset of $\silt \Lambda$ consisting of all isomorphism classes of basic two-term silting complexes in $K^b(\Lambda \lproj)$.
\end{definition}

\begin{theorem}[{\cite[Theorem 3.2 and Corollary 3.9]{MR3187626}}]\label{2020-03-27 17:22:17}
    There is an isomorphism
    \begin{equation*}
        \stautilt \Lambda \rightarrow \twosilt \Lambda
    \end{equation*}
    of partially ordered sets given by $\stautilt \Lambda \ni (M,P) \mapsto (P_1\oplus P \xrightarrow{(f_1\ 0)} P_0) \in \twosilt \Lambda$ where $P_1\xrightarrow{f_1}P_0\xrightarrow{f_0}M\rightarrow 0$
    is a minimal projective presentation of $M$.
\end{theorem}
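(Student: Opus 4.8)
The plan is to funnel everything through a single $\Hom$-computation in the homotopy category and then read off each of the four requirements (well-definedness, injectivity, surjectivity, compatibility of orders) from it. Write $T_{(M,P)}=(P_1\oplus P\xrightarrow{(f_1\ 0)}P_0)$ for the complex attached to a pair $(M,P)$ as in the statement; it splits in $K^b(\Lambda\lproj)$ as $T_M\oplus P[1]$, where $T_M=(P_1\xrightarrow{f_1}P_0)$ comes from the \emph{minimal} projective presentation of $M$. Minimality forces $|T_M|=|M|$ and ensures that $T_M$ and $P[1]$ have no common indecomposable summand, so $|T_{(M,P)}|=|M|+|P|$. The key lemma I would prove is that for every $\Lambda$-module $N$ and projective $\Lambda$-module $Q$ there is a natural isomorphism
\begin{equation*}
    \Hom_{K^b(\Lambda\lproj)}\bigl(T_{(M,P)},\,T_{(N,Q)}[1]\bigr)\ \cong\ D\Hom_\Lambda(N,\tau M)\,\oplus\,\Hom_\Lambda(P,N),
\end{equation*}
where $D=\Hom_k(-,k)$.

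To prove the lemma I would decompose both arguments, writing $T_{(N,Q)}=T_N\oplus Q[1]$ with $T_N$ built from the minimal presentation of $N$, and compute the four resulting $\Hom$-spaces. The two that land in the shifted stalk $Q[2]$ vanish for degree reasons; $\Hom_{K^b}(P[1],T_N[1])\cong\Hom_\Lambda(P,H^0(T_N))=\Hom_\Lambda(P,N)$ because $P$ is projective; and for $\Hom_{K^b}(T_M,T_N[1])$ one first reduces to $\Hom_{K^b}(T_M,N[1])$ — using the truncation triangle $(H^{-1}T_N)[1]\to T_N\to N\to(H^{-1}T_N)[2]$ together with the fact that maps from the two-term complex $T_M$ into stalks placed in cohomological degree $\le-2$ vanish — and then identifies $\Hom_{K^b}(T_M,N[1])$ with $\operatorname{cok}\bigl(\Hom_\Lambda(P_0,N)\to\Hom_\Lambda(P_1,N)\bigr)$ by a direct count of chain maps modulo homotopy. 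The latter cokernel is $D\Hom_\Lambda(N,\tau M)$ by the definition $\tau M=D\operatorname{Tr}M$ and the evaluation isomorphism $\Hom_\Lambda(P,\Lambda)\otimes_\Lambda N\cong\Hom_\Lambda(P,N)$ for projective $P$.

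Granting the lemma, the rest is bookkeeping. Taking $(N,Q)=(M,P)$ shows $\Hom_{K^b}(T_{(M,P)},T_{(M,P)}[1])=0$ precisely when $(M,P)$ is a $\tau$-rigid pair, so $T_{(M,P)}$ is a two-term presilting complex; since a two-term presilting complex with the maximal number $|\Lambda|$ of pairwise non-isomorphic indecomposable summands is automatically silting and $|T_{(M,P)}|=|M|+|P|=|\Lambda|$ for a support $\tau$-tilting pair, $T_{(M,P)}$ is two-term silting, so the map is well defined. Injectivity is Krull--Schmidt in $K^b(\Lambda\lproj)$: $M=H^0(T_{(M,P)})$ recovers $M$, hence $T_M$, hence $P[1]$, hence $P$. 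For surjectivity I would use the normal-form statement that every two-term complex of projectives is isomorphic in $K^b(\Lambda\lproj)$ to $T_M\oplus P[1]$ with $M=H^0$ and a uniquely determined projective $P$; for a two-term silting $T$ written this way, the lemma with $(N,Q)=(M,P)$ converts $\Hom_{K^b}(T,T[1])=0$ into $\tau$-rigidity of $(M,P)$, and $|M|+|P|=|T|=|\Lambda|$ since $T$ is silting, so $(M,P)$ is a support $\tau$-tilting pair mapping onto $T$.

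For the order, I would invoke Adachi--Iyama--Reiten's intrinsic criterion: for support $\tau$-tilting pairs $(M,P)$ and $(M',P')$ one has $M\ge M'$ in $\stautilt\Lambda$ if and only if $\Hom_\Lambda(M',\tau M)=0$ and $\Hom_\Lambda(P,M')=0$. By the lemma with $(N,Q)=(M',P')$ these are exactly the vanishing conditions equivalent to $\Hom_{K^b}(T_{(M,P)},T_{(M',P')}[1])=0$, i.e., to $T_{(M,P)}\ge T_{(M',P')}$ in $\twosilt\Lambda$; hence the bijection is an isomorphism of posets. The main work — and the likeliest place for trouble — is the lemma itself, especially getting the reduction $\Hom_{K^b}(T_M,T_N[1])\cong\Hom_{K^b}(T_M,N[1])$ and the homotopy bookkeeping exactly right, together with the two imported facts it is combined with: that maximal-size two-term presilting complexes are silting, and the $\Hom$-description of the partial order on $\stautilt\Lambda$ — neither of which is a formal consequence of the computation.
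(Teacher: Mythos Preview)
The paper does not give its own proof of this statement: it is quoted verbatim as a result of Adachi--Iyama--Reiten \cite[Theorem~3.2 and Corollary~3.9]{MR3187626}, with no argument supplied. So there is nothing in the present paper to compare your proposal against; what you have written is essentially a reconstruction of the original AIR proof. Your key $\Hom$-computation is their Lemma~3.4/Proposition~3.3, the ``maximal-size presilting is silting'' input is their Proposition~3.3(b) (relying on Bongartz-type completion), and the $\Hom$-criterion you use for the order on $\stautilt\Lambda$ is their description $\Fac M={}^{\perp}(\tau M)\cap P^{\perp}$. Your sketch is correct at the level of detail you give, and you correctly flag the two non-formal imports; just be aware that the first of these (presilting of size $|\Lambda|$ implies silting) is where the real content of the bijection lies, and in AIR it is not a consequence of the $\Hom$-lemma but rather of an independent completion argument.
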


We remark that the correspondence above commutes with support $\tau$-tilting mutations and silting mutations \cite[Corollary 3.9]{MR3187626}.

\begin{remark}
    If $\Lambda$ is a finite dimensional symmetric $k$-algebra, then any silting complex in $K^b(\Lambda \lproj)$ is in fact a tilting complex by \cite[Example 2.8]{MR2927802}.
\end{remark}

By this fact, silting complexes over the blocks of group algebras are tilting complexes in fact. Hence the classifications of support $\tau$-tilting modules over the blocks means those of two-term tilting complexes.

\section{Preliminaries for modular representation theory}\label{Preliminaries: Modular rep-Theory}

In this section, let $G$ be a finite group and $k$ an algebraically closed field of characteristic $p$, where $p$ is a prime.
The field $k$ can always be regarded as an $kG$-module by defining $gx=x$ for
all $g\in G$ and $x \in k$. This module is called the {\it trivial module} and is denoted by $k_G$.

\subsection{Restriction functors and induction functors}
Let $H$ be a subgroup of $G$.
We denote by $\restr_{H}^G$ the {\it restriction functor} from $kG\lmod$ to $kH\lmod$ and $\induc_H^G:={}_{kG}kG\otimes _{kH} -$ the {\it induction functor} from $kH\lmod$ to $kG\lmod$.
These are exact functors and have the following properties.
The first one is called Frobenius reciprocity.
\begin{proposition}[{See \cite{MR860771}}]\label{Theorem: Frobenius and projective}
    Let $H$ be a subgroup of $G$.
    Then the functors $\restr_H^G$ and $\induc_H^G$ have the following properties:
    \begin{enumerate}
        \item the functor $\induc_H^G$ is both left and right adjoint to $\restr_H^G$.
        \item the functors $\restr_H^G$ and $\induc_H^G$ send projective modules to projective modules.
    \end{enumerate}
\end{proposition}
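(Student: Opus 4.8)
The plan is to derive both assertions from the single structural fact that the group algebra $kG$ is free as a one-sided module over $kH$ on either side, with a basis given by a transversal.

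\emph{Part (1).} First I would write $\induc_H^G = kG \otimes_{kH} -$ and regard $kG$ as a $(kG,kH)$-bimodule. The tensor--hom adjunction then gives a natural isomorphism $\Hom_{kG}(\induc_H^G M, N) \cong \Hom_{kH}(M, \restr_H^G N)$ for $M \in kH\lmod$ and $N \in kG\lmod$, so $\induc_H^G$ is left adjoint to $\restr_H^G$; this is Frobenius reciprocity. For the right adjoint property the key input is that, since $[G:H]$ is finite, induction agrees with coinduction: choosing a set of representatives $g_1,\dots,g_n$ for the left cosets of $H$ in $G$ yields a decomposition $kG = \bigoplus_i g_i\, kH$ as a right $kH$-module, and from this one checks that the canonical $kG$-linear map $kG \otimes_{kH} M \to \Hom_{kH}(kG, M)$ is bijective (here $kG$ on the right is viewed as a $(kH,kH)$-bimodule). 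Feeding the resulting identification of $\induc_H^G M$ with $\Hom_{kH}(kG, M)$ into the other tensor--hom adjunction gives $\Hom_{kH}(\restr_H^G N, M) \cong \Hom_{kG}(N, \induc_H^G M)$, so $\induc_H^G$ is also right adjoint to $\restr_H^G$.

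\emph{Part (2).} Here I would use that $kG$ is free of rank $[G:H]$ on each side over $kH$: a transversal for $G/H$ is a right $kH$-basis of $kG$, and a transversal for $H\backslash G$ is a left $kH$-basis of $kG$. Consequently $\induc_H^G(kH) = kG \otimes_{kH} kH \cong kG$ is a free $kG$-module, while $\restr_H^G(kG) \cong (kH)^{[G:H]}$ is a free $kH$-module. Since both functors are additive and every projective module is a direct summand of a free one, it follows that $\induc_H^G$ and $\restr_H^G$ send free modules to free modules and hence projective modules to projective modules.

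\emph{Main obstacle.} None of this is deep; the only step that deserves attention is the identification of induction with coinduction in Part (1) --- i.e.\ verifying that $kG \otimes_{kH} M \to \Hom_{kH}(kG, M)$ is an isomorphism --- which is precisely where finiteness of the index is used. Everything else is formal adjunction manipulation together with the freeness of $kG$ over $kH$; a fully detailed account is given in \cite{MR860771}.
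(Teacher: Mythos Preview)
Your argument is correct and is the standard one; in particular the identification of induction with coinduction via the freeness of $kG$ over $kH$ is exactly the point, and your treatment of Part~(2) is fine. Note, however, that the paper does not give its own proof of this proposition: it is stated as a background fact with a citation to Alperin's book and is used freely thereafter. So there is nothing to compare against beyond observing that your write-up is precisely the kind of argument one finds in the cited reference.
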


Let $H$ be a subgroup of $G$ and $U$ a $kH$-module. For $g \in G$, we define a $k[gHg^{-1}]$-module $gU$ consisting of symbols $gu$ where $u\in U$ as a set and its $k[gHg^{-1}]$-module structure is given by $gu+gu':=g(u+u')$, $s(gu):=g(su)$ and $ghg^{-1}(gu):=g(hu) $ for any $u, u'\in U$, $s\in k$ and $ghg^{-1} \in gHg^{-1}$. We remark that if $H$ is a normal subgroup of $G$, then $gU$ is also a $kH$-module.

\begin{remark}
    By easy calculations, we have $g\restr_H^K U \cong \restr_{gHg^{-1}}^{gKg^{-1}}gU$ where $H \leq K\leq G$.
\end{remark}

Let $H$ and $K$ be subgroups of $G$. We denote by $[G/H]$, $[K \backslash G]$ and $[K\backslash G/ H]$ some sets of representatives of $G/H$, $K \backslash G$ and $K\backslash G/ H$, respectively.

\begin{theorem}[Mackey's decomposition formula]\label{Mackey's decomposition formula}
    Let $H$ and $K$ be subgroups of $G$, and $U$ a $kH$-module. Then we have
    \begin{equation*}
        \restr^G_K\induc^G_H U\cong\bigoplus_{g \in [K\backslash G/H]}\induc^K_{K\cap gHg^{-1}}\restr^{gHg^{-1}}_{K\cap gHg^{-1}} gU.
    \end{equation*}
\end{theorem}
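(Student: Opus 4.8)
The plan is to work directly with the double-coset decomposition of $kG$ as a bimodule. As a $(kK,kH)$-bimodule, $kG$ splits as the direct sum of the subspaces $k[KgH]$ spanned by the double cosets $KgH$, where $g$ runs over $[K\backslash G/H]$; each $k[KgH]$ is stable under left multiplication by $K$ and right multiplication by $H$, hence is a $(kK,kH)$-sub-bimodule. Since $\induc^G_H U = kG\otimes_{kH}U$, tensoring this decomposition with $U$ over $kH$ yields an isomorphism
\begin{equation*}
    \restr^G_K\induc^G_H U \;\cong\; \bigoplus_{g\in[K\backslash G/H]} k[KgH]\otimes_{kH}U
\end{equation*}
of $kK$-modules, so it suffices to identify the $g$-th summand with $\induc^K_{K\cap gHg^{-1}}\restr^{gHg^{-1}}_{K\cap gHg^{-1}}gU$.

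Fix $g$ and set $L=K\cap gHg^{-1}$, so that $g^{-1}Lg=g^{-1}Kg\cap H\le H$. The combinatorial input is that the left cosets $xH\subseteq KgH$ are precisely the cosets $kgH$ with $k\in K$, and $kgH=k'gH$ iff $k^{-1}k'\in L$; thus $k[KgH]=\bigoplus_{k\in[K/L]}k[kgH]$ as right $kH$-modules, each summand being free of rank one, and consequently every element of $k[KgH]\otimes_{kH}U$ is uniquely of the form $\sum_{k\in[K/L]}kg\otimes u_k$. The identity $vh\otimes u=v\otimes hu$ applied to $v=kg$ shows the only relations among the generators $kg\otimes u$ are $(kl)g\otimes u=kg\otimes(g^{-1}lg)u$ for $l\in L$ (using $g^{-1}lg\in H$). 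On the other side, in $kK\otimes_{kL}\bigl(\restr^{gHg^{-1}}_L gU\bigr)$ the defining relations read $kl\otimes gu=k\otimes l(gu)=k\otimes g\bigl((g^{-1}lg)u\bigr)$, which correspond exactly under the assignment $k\otimes gu\leftrightarrow kg\otimes u$. I would therefore define $\phi\colon kK\otimes_{kL}\bigl(\restr^{gHg^{-1}}_L gU\bigr)\to k[KgH]\otimes_{kH}U$ on generators by $k\otimes gu\mapsto kg\otimes u$, check that it is a well-defined $kK$-homomorphism, construct the inverse $kg\otimes u\mapsto k\otimes gu$ in the same way, and finally sum over $g$ to obtain the stated isomorphism.

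The only real work is the verification in the previous paragraph that $\phi$ and its candidate inverse are compatible with the balancing relations of both tensor products. This reduces to the short computation $(kl)g\otimes u=k(ghg^{-1})g\otimes u=kgh\otimes u=kg\otimes hu$ with $h:=g^{-1}lg\in H$, together with the dual statement that $l$ acts on $gU$ the way $g^{-1}lg$ acts on $U$; beyond bookkeeping the three group actions involved, no genuine difficulty is expected.
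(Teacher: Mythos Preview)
Your argument is the standard textbook proof of Mackey's formula and is correct: the $(kK,kH)$-bimodule decomposition $kG=\bigoplus_{g}k[KgH]$, the coset description $k[KgH]=\bigoplus_{k\in[K/L]}k[kgH]$ with $L=K\cap gHg^{-1}$, and the explicit isomorphism $k\otimes gu\leftrightarrow kg\otimes u$ with the balancing check via $g^{-1}lg\in H$ are all in order.

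Note, however, that the paper does not supply a proof of this statement at all. Mackey's decomposition formula is quoted in Section~\ref{Preliminaries: Modular rep-Theory} as a classical background result (it appears, for instance, in Alperin's textbook \cite{MR860771} cited throughout that section) and is used as a tool in later lemmas such as Lemma~\ref{Lemma; tau rigid iff} and Lemma~\ref{tau rigid pair hom zero}. So there is no ``paper's own proof'' to compare against; your write-up simply fills in a standard reference the authors chose to omit.
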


\begin{remark}\label{Remark: normal mackey}
    Let $N$ be a normal subgroup of $G$ and $H$ a subgroup of $G$ containing $N$. Then $N \backslash G/H=G/H$ and
    \begin{equation*}
        \restr^G_N\induc^G_H U\cong\bigoplus_{g \in [G/H]}g \restr^H_N U.
    \end{equation*}
    In particular, if $N=H$ then
    \begin{equation*}
        \restr^G_N\induc^G_N U\cong\bigoplus_{g \in [G/N]}gU.
    \end{equation*}
\end{remark}

Let $U$ and $V$ be $kG$-modules. The $k$-module $U\otimes V =U \otimes_k V$ has a $kG$-module structure given by $g(u\otimes v)=gu\otimes gv$, for all $g \in G$, $u \in U$ and $v \in V$. We remark that we have the following natural isomorphisms of $kG$-modules: $U\otimes k_G\cong U$ and $\induc_H^G(\restr_H^G U\otimes W)\cong U\otimes \induc_H^G W$ for $kG$-module $U$ and $kH$-module $W$ (for example, see \cite[Lemma 8.5]{MR860771}).

For a normal subgroup $N$ of $G$ and $kN$-module $U$, we denote by $\decompgp_G(U)$ the inertial group of $U$ in $G$, that is $\decompgp_G(U):=\left\{ g \in G \mid gU \cong U \text{ as $kN$-modules} \right\}$.
\begin{theorem}[Clifford's Theorem of simple modules]\label{Clifford's Theorem of simple module}
    Let $N$ be a normal subgroup of $G$, $S$ a simple $kG$-module and $T$ a simple $kN$-submodule of $\restr_N^G S$. Then we have a $kN$-module isomorphism
    \begin{equation*}
        \restr_N^G S \cong \bigoplus_{g \in [G/\decompgp_G(T)]} gT^{\oplus r}
    \end{equation*}
    where $r$ is an integer, called the ramification index of $S$ in $G$. Furthermore, we can consider that $T^{\oplus r}$ is a simple $k\decompgp_G(T)$-module and $S \cong \induc_{\decompgp_G(T)}^G T^{\oplus r}$ as $kG$-modules.
\end{theorem}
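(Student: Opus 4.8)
The plan is to run the classical argument for Clifford's theorem, working inside $\restr_N^G S$ and repeatedly exploiting the simplicity of $S$ as a $kG$-module.

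First I would note that for each $g \in G$ the $k$-linear automorphism $s \mapsto gs$ of $S$ carries the $kN$-submodule $T$ onto a $kN$-submodule of $\restr_N^G S$ which, as a $kN$-module, is isomorphic to the conjugate $gT$; in particular each $gT$ is again a simple $kN$-submodule of $\restr_N^G S$. Hence $\sum_{g\in G} gT$ is a nonzero submodule of $S$ that is stable under both $N$ and $G$, so by simplicity it equals $S$. Consequently $\restr_N^G S$ is a sum of simple $kN$-modules, hence semisimple, and every simple summand of $\restr_N^G S$ is a conjugate of $T$.

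Next I would pass to the decomposition $\restr_N^G S = \bigoplus_i V_i$ into homogeneous (isotypic) components, each $V_i$ being the sum of all $kN$-submodules of $S$ isomorphic to a fixed conjugate of $T$. The group $G$ permutes these components, since $gV_i$ is again a homogeneous component (for the class of the appropriate conjugate of $T$), and by simplicity of $S$ this action is transitive on the set of components that actually occur. Writing $V$ for the component containing $T$, its stabilizer under this $G$-action is precisely $\decompgp_G(T)$, by the definition of the inertial group; thus the components are exactly the modules $gV$ for $g \in [G/\decompgp_G(T)]$. Since conjugation by $g$ sends $V$ to $gV$ preserving the number of simple $kN$-constituents, all components share the same multiplicity $r$, where $V \cong T^{\oplus r}$ as a $kN$-module; this yields $\restr_N^G S \cong \bigoplus_{g\in[G/\decompgp_G(T)]} (gT)^{\oplus r}$.

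For the remaining assertions, observe that $V$ is a $k\decompgp_G(T)$-submodule of $S$ because $\decompgp_G(T)$ stabilizes $V$. I would prove that $V$ is simple as a $k\decompgp_G(T)$-module: if $0 \neq W \subsetneq V$ were a $k\decompgp_G(T)$-submodule, then $\sum_{g\in[G/\decompgp_G(T)]} gW$ is a direct sum (each $gW$ lies in the distinct homogeneous component $gV$) and is $G$-stable, hence a nonzero $kG$-submodule of $S$; by simplicity it equals $S$, forcing $W = V$, a contradiction. Finally, the $kG$-homomorphism $\induc_{\decompgp_G(T)}^G V \to S$ given by $g \otimes v \mapsto gv$ is surjective (its image contains every $gV$), and its source and target have the same $k$-dimension $[G:\decompgp_G(T)]\cdot \dim_k V$ by the homogeneous decomposition just obtained; therefore it is an isomorphism, and identifying $V$ with $T^{\oplus r}$ gives $S \cong \induc_{\decompgp_G(T)}^G T^{\oplus r}$. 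The step I expect to be the main obstacle is the bookkeeping in the third paragraph, namely making precise how $G$ permutes the homogeneous components and that the stabilizer of the one containing $T$ is exactly the inertial group $\decompgp_G(T)$, together with the analogous simplicity argument in the last paragraph; the semisimplicity of $\restr_N^G S$ and the final dimension count are routine.
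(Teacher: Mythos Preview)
Your argument is the standard proof of Clifford's theorem and is correct as written; the bookkeeping you flag (that $G$ permutes the isotypic components transitively with stabilizer $\decompgp_G(T)$, and the simplicity of $V$ over $k\decompgp_G(T)$) goes through exactly as you describe. Note, however, that the paper does not actually prove this statement: it is recorded in the preliminaries as a classical result (Clifford's Theorem) and used without proof, so there is no ``paper's own proof'' to compare against. Your write-up would serve perfectly well as a self-contained justification if one were desired.
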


From now on, we will consider the case where $G/N$ is a $p$-group.
The following theorem makes substantial contribution in this paper.

\begin{theorem}[{Green's indecomposability theorem \cite{MR131454}}]\label{Green's indecomposability theorem}
    If $N$ is a normal subgroup of $G$ such that $G/N$ is a $p$-group, then $\induc_N^G V$ is an indecomposable $kG$-module for any indecomposable $kN$-module $V$.
\end{theorem}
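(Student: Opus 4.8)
The plan is to reduce the statement to the case $[G:N]=p$ and then to prove that the endomorphism algebra of the induced module is local. Since $G/N$ is a finite $p$-group it has a composition series all of whose factors are cyclic of order $p$; pulling this back to $G$ produces a chain $N=N_0\triangleleft N_1\triangleleft\cdots\triangleleft N_r=G$ with $N_{j+1}/N_j\cong\mathbb{Z}/p\mathbb{Z}$ for every $j$. Because induction of modules is transitive, so that $\induc_{N_0}^{N_r}=\induc_{N_{r-1}}^{N_r}\circ\cdots\circ\induc_{N_0}^{N_1}$, it is enough by induction on $r$ to prove the case $r=1$, namely: if $N\triangleleft G$ with $G/N$ cyclic of order $p$ and $V$ is an indecomposable $kN$-module, then $W:=\induc_N^G V$ is an indecomposable $kG$-module. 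So fix such $G$, $N$, $V$, and a generator $gN$ of $G/N$.

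By the normal form of Mackey's formula (Remark~\ref{Remark: normal mackey}), $\restr_N^G W\cong\bigoplus_{i=0}^{p-1}g^iV$, a direct sum of indecomposable $kN$-modules. Set $\bar E:=\End_{kN}(\restr_N^G W)$. This is a finite-dimensional $k$-algebra, so $\bar E/\Rad\bar E$ is a product of full matrix algebras over $k$, with one factor $M_{m_a}(k)$ for each isomorphism class $a$ occurring among the $g^iV$, where $m_a$ is its multiplicity. Conjugation by the $k$-linear automorphism ``multiplication by $g$'' of $W$ preserves $\bar E$ (since $g$ normalizes $N$), and restriction identifies $E:=\End_{kG}(W)$ with the fixed subalgebra $\{\psi\in\bar E:g\psi g^{-1}=\psi\}$, because a $kN$-linear endomorphism of $W$ is $kG$-linear exactly when it commutes with multiplication by $g$. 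It remains to show $E$ is local.

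As $\Rad\bar E$ is characteristic, conjugation by $g$ descends to an automorphism $\sigma$ of the semisimple algebra $S:=\bar E/\Rad\bar E$, and reducing modulo $\Rad\bar E$ yields a $k$-algebra embedding $E/(E\cap\Rad\bar E)\hookrightarrow S^{\sigma}=\{s\in S:\sigma(s)=s\}$. Since $E\cap\Rad\bar E$ is a nil ideal of $E$, and since any $k$-subalgebra of a finite-dimensional local $k$-algebra with residue field $k$ is again such, it suffices to prove that $S^{\sigma}$ is a local $k$-algebra with residue field $k$. Now multiplication by $g$ cyclically permutes the summands $g^0\otimes V,\dots,g^{p-1}\otimes V$ of $\restr_N^G W$ (it carries $g^{p-1}\otimes V$ back onto $1\otimes V$ because $g^p\in N$), so $\sigma$ permutes the matrix factors of $S$ the way $i\mapsto i+1$ permutes $\{0,\dots,p-1\}$ once isomorphic summands are identified. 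As $p$ is prime there are two cases. If $gV\not\cong V$, the $g^iV$ are pairwise non-isomorphic, so $S\cong k^{p}$ with $\sigma$ cycling the coordinates and $S^{\sigma}\cong k$. If $gV\cong V$, then $S\cong M_p(k)$ and $\sigma$ is the inner automorphism by a monomial matrix $P$ whose support is a single $p$-cycle, so that $P^p=cI$ for some $c\in k^{\times}$ and $S^{\sigma}=C_{M_p(k)}(P)=k[P]\cong k[x]/(x^p-c)$; since $\operatorname{char}k=p$ and $k$ is algebraically closed we have $x^p-c=(x-c^{1/p})^{p}$, so this ring is local with residue field $k$. In either case $S^{\sigma}$ is as wanted, hence $E$ is local and $W$ is indecomposable; the general case of the theorem then follows by induction on $r$ via the reduction above.

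The single essential use of the hypothesis, and the step I expect to be the real obstacle, is the identity $x^p-c=(x-c^{1/p})^{p}$ in the case $gV\cong V$: it says precisely that $C_{M_p(k)}(P)$ is local, and it is the endomorphism-algebra shadow of the fact that the group algebra of a $p$-group over a field of characteristic $p$ is local. The rest is bookkeeping: verifying that conjugation by $g$ permutes the simple factors of $S$ by a single $p$-cycle (by tracking how multiplication by $g$ moves the summands of $\restr_N^G W$), and that an automorphism of $M_p(k)$ cyclically permuting a complete set of orthogonal primitive idempotents is inner of the asserted monomial shape (Skolem--Noether).
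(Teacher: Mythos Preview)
The paper does not supply its own proof of this statement: Green's indecomposability theorem is quoted as a classical result with a citation to Green's 1959 paper and is used only as a black box (in Lemma~\ref{lemma indec proj}, Lemma~\ref{induction functor projective cover}, and the proof of Theorem~\ref{Main Theorem: poset struc}). There is therefore nothing in the paper to compare your argument against.

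That said, your argument is correct and is one of the standard proofs. The reduction to index $p$ via a subnormal series with cyclic factors and transitivity of induction is routine. In the index-$p$ case your analysis of $S^{\sigma}$ is sound: in Case~1 the orbit of $V$ under the cyclic group $G/N$ of prime order $p$ has size $1$ or $p$, so $gV\not\cong V$ forces the $g^iV$ to be pairwise non-isomorphic and $S^{\sigma}\cong k$; in Case~2 the monomial matrix $P$ implementing $\sigma$ has powers $P^0,\dots,P^{p-1}$ with disjoint supports, hence is non-derogatory, so $C_{M_p(k)}(P)=k[P]\cong k[x]/(x^p-c)$, which is local since $\operatorname{char}k=p$. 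The descent from ``$S^{\sigma}$ local with residue field $k$'' to ``$E$ local'' is also fine: a finite-dimensional $k$-subalgebra of such an algebra is again local with residue field $k$ (non-units in the ambient algebra form the maximal ideal, and an element invertible in the larger finite-dimensional algebra is already invertible in the subalgebra), and $E\cap\Rad\bar E$ is nilpotent so units lift.
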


\begin{proposition}[{See \cite[Theorem 3.5.11]{MR998775} or \cite[Exercise 19.1]{MR860771}}]\label{unique extension of simple module}
    Let $N$ be a normal subgroup of $G$ and $T$ a simple $kN$-module such that $\decompgp_G(T)=G$. If $G/N$ is a $p$-group, then there exists a unique simple $kG$-module $S$ such that $\restr_N^G S\cong T$.
\end{proposition}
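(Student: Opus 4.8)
The plan is to prove uniqueness first, since it is a short self-contained argument, and then to deduce existence by induction on $[G:N]$, feeding the uniqueness statement back into the induction.

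For uniqueness, suppose $S_1,S_2$ are simple $kG$-modules with $\restr_N^G S_i\cong T$. Then $\Hom_{kN}(S_1,S_2)\cong\End_{kN}(T)\cong k$ by Schur's lemma, since $T$ is simple and $k$ is algebraically closed. I would equip this one-dimensional space with a $k[G/N]$-module structure via $(g\cdot f)(s):=g\,f(g^{-1}s)$, which is well defined on cosets modulo $N$ precisely because $f$ is $kN$-linear, and associative and unital by a direct check. Since $G/N$ is a $p$-group and $\operatorname{char}k=p$, the algebra $k[G/N]$ is local with residue field $k$, so its only one-dimensional module is the trivial one; hence $G/N$ acts trivially on $\Hom_{kN}(S_1,S_2)$. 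Consequently any nonzero $f$ in it satisfies $f(gs)=g\,f(s)$ for all $g\in G$, i.e.\ $f$ is $kG$-linear, and being a nonzero homomorphism between simple $kG$-modules it is an isomorphism; thus $S_1\cong S_2$.

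For existence I would induct on $[G:N]$, the case $G=N$ being trivial. If $[G:N]>1$, choose a subgroup $M$ with $N\subseteq M$, $M$ normal in $G$ and $[G:M]=p$ (pull back a maximal subgroup of the $p$-group $G/N$, which is automatically normal of index $p$). Every element of $M$ lies in $\decompgp_G(T)=G$, so $\decompgp_M(T)=M$ and, by the induction hypothesis, $T$ extends to a simple $kM$-module $S'$, unique by the uniqueness established above. For any $g\in G$, the $kM$-module $gS'$ satisfies $\restr_N^M(gS')\cong g\,\restr_N^M S'\cong gT\cong T$ (here $N$ is normal and $T$ is $G$-invariant), so uniqueness forces $gS'\cong S'$; that is, $S'$ is $G$-invariant. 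Replacing $(N,T)$ by $(M,S')$ reduces us to the case $[G:N]=p$. In that case pick $x\in G$ with $G/N=\langle xN\rangle$, so $x^p\in N$; let $\rho\colon N\to GL(T)$ be the action. The isomorphism $xT\cong T$ gives $\phi\in GL(T)$ with $\phi\rho(n)\phi^{-1}=\rho(xnx^{-1})$ for all $n\in N$; then $\rho(x^p)^{-1}\phi^{p}$ commutes with every $\rho(n)$, hence equals a scalar $\lambda\in k^\times$ by Schur. Choosing $\nu\in k$ with $\nu^{p}=\lambda^{-1}$ (possible as $k$ is algebraically closed) and setting $\xi:=\nu\phi$ yields $\xi\rho(n)\xi^{-1}=\rho(xnx^{-1})$ and $\xi^{p}=\rho(x^p)$, exactly the relations needed for $(\rho,\xi)$ to define a $kG$-module structure on $T$. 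This $kG$-module restricts to $T$ and is simple over $kG$ because it is already simple over $kN$, closing the induction.

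The main obstacle is the mildly bootstrapped structure of the argument: checking that the intermediate extension $S'$ is $G$-invariant genuinely uses the uniqueness statement, so uniqueness must precede existence. The other delicate point is the scalar correction $\nu$, where algebraic closedness is used to extract a $p$-th root; conceptually this is the vanishing of the obstruction class in $H^2(G/N,k^\times)$, which holds because $G/N$ is a $p$-group while $k^\times$ has no $p$-torsion in characteristic $p$, and one could recast the entire existence proof in that cohomological language.
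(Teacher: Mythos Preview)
The paper does not supply its own proof of this proposition; it is stated with citations to \cite[Theorem~3.5.11]{MR998775} and \cite[Exercise~19.1]{MR860771} and then used as a black box. Your argument is correct and is essentially the standard one found in those references: uniqueness via the observation that the one-dimensional $k[G/N]$-module $\Hom_{kN}(S_1,S_2)$ must be trivial because $k[G/N]$ is local in characteristic $p$, and existence by induction reducing to the index-$p$ case, where a $p$-th-root scalar correction turns an intertwiner $\phi$ into an honest extension $\xi$ with $\xi^{p}=\rho(x^{p})$. The point you single out---that uniqueness for the intermediate group $M$ is what forces the extension $S'$ to be $G$-invariant, so uniqueness must be proved first---is exactly the organizational subtlety in the textbook proofs, and your closing cohomological remark (the obstruction lives in $H^{2}(G/N,k^{\times})$, which vanishes since $k^{\times}$ is uniquely $p$-divisible) is the right conceptual gloss.
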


\begin{lemma}\label{Lemma simple ind res}
    Let $S$ be a simple $kG$-module. Suppose that $G/N$ is a $p$-group. Then there is only $S$ that can be a composition factor of $\induc_N^G \restr_N^G S$.
\end{lemma}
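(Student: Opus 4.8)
The plan is to reduce the statement to a claim about inducing a single simple $kN$-module and then to exploit the hypothesis that $G/N$ is a $p$-group in order to control the ramification index coming from Clifford theory. Since $N$ is normal in $G$, Clifford's Theorem~\ref{Clifford's Theorem of simple module} shows that $\restr_N^G S$ is semisimple, so a simple $kN$-module $T$ is a direct summand of $\restr_N^G S$ exactly when it is a composition factor of $\restr_N^G S$. Because $\induc_N^G$ is exact, every composition factor of $\induc_N^G\restr_N^G S$ is a composition factor of $\induc_N^G T$ for some such $T$; hence it suffices to prove that for every simple $kN$-module $T$ with $T\mid\restr_N^G S$, every composition factor of $\induc_N^G T$ is isomorphic to $S$. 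Fix such a $T$ and put $H:=\decompgp_G(T)$, so that $N\leq H\leq G$ and $H/N$ is a $p$-group, being a subgroup of $G/N$.

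First I would analyse $\induc_N^H T$ inside $kH\lmod$. By Proposition~\ref{unique extension of simple module} there is a unique simple $kH$-module $\hat T$ with $\restr_N^H\hat T\cong T$. Using the tensor identity $\induc_N^H(\restr_N^H\hat T\otimes_k W)\cong\hat T\otimes_k\induc_N^H W$ with $W=k_N$, together with $\induc_N^H k_N\cong k[H/N]$ (the permutation $kH$-module on $H/N$, on which $N$ acts trivially because $N\trianglelefteq H$), one gets $\induc_N^H T\cong\induc_N^H\restr_N^H\hat T\cong\hat T\otimes_k k[H/N]$. Since $H/N$ is a $p$-group, $k[H/N]$ admits a filtration by $kH$-submodules with all subquotients isomorphic to the trivial module $k_H$; tensoring such a filtration with $\hat T$ over $k$ shows that $\induc_N^H T$ admits a filtration with all subquotients isomorphic to $\hat T$, so every composition factor of $\induc_N^H T$ is isomorphic to $\hat T$.

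Next I would identify $S$ and conclude. By Clifford's Theorem~\ref{Clifford's Theorem of simple module}, $S\cong\induc_H^G\widetilde T$ for a simple $kH$-module $\widetilde T$ with $\restr_N^H\widetilde T\cong T^{\oplus r}$, where $r$ denotes the ramification index. By Frobenius reciprocity (Proposition~\ref{Theorem: Frobenius and projective}), $\Hom_{kH}(\widetilde T,\induc_N^H T)\cong\Hom_{kN}(\restr_N^H\widetilde T,T)\neq 0$, so $\widetilde T$ is a composition factor of $\induc_N^H T$; by the previous paragraph this forces $\widetilde T\cong\hat T$, hence $r=1$ and $S\cong\induc_H^G\hat T$. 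Finally, $\induc_N^G T\cong\induc_H^G(\induc_N^H T)$, and applying the exact functor $\induc_H^G$ to a composition series of $\induc_N^H T$ produces a filtration of $\induc_N^G T$ all of whose subquotients are isomorphic to $\induc_H^G\hat T\cong S$; since $S$ is simple this filtration is a composition series, and therefore every composition factor of $\induc_N^G T$, hence of $\induc_N^G\restr_N^G S$, is isomorphic to $S$.

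The step I expect to be the main obstacle is the control of the ramification index: a priori $T$ could occur in $\restr_N^G S$ with multiplicity $r>1$, and one genuinely needs the $p$-group hypothesis — via Proposition~\ref{unique extension of simple module} and the fact that the regular module of $k[H/N]$ has only the trivial module as a composition factor — both to force $r=1$ and to guarantee that inducing $\hat T$ from the inertia group $H$ already yields $S$, rather than a module with additional composition factors.
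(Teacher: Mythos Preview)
Your argument is correct, but it takes a considerably longer route than the paper. The paper applies the tensor identity directly at the level of $G$: from $\induc_N^G(\restr_N^G S\otimes k_N)\cong S\otimes\induc_N^G k_N$ one gets $\induc_N^G\restr_N^G S\cong S\otimes k(G/N)$ in one step, and since $G/N$ is a $p$-group the permutation module $k(G/N)$ has only the trivial module as composition factor, forcing every composition factor of the tensor product to be $S\otimes k_G\cong S$. You instead break $\restr_N^G S$ into simple summands $T$, pass to the inertia group $H=\decompgp_G(T)$, apply exactly the same tensor-identity argument there to $\hat T$ (your analysis of $\induc_N^H T$ is the paper's proof transplanted to $H$), and then use Clifford theory and transitivity of induction to climb back up to $G$. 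This detour does yield the bonus information that the ramification index is $1$ and essentially proves Corollary~\ref{corollary only one simple in induc} en route, but for the lemma itself none of it is needed: the decomposition of $\restr_N^G S$, the passage through $H$, and the identification of $\widetilde T$ with $\hat T$ are all avoidable once you notice that the tensor identity already works globally.
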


\begin{proof}
    We remark that the group algebra of any $p$-group over $k$ is a local $k$-algebra (for example, see \cite[Corollary 3.3]{MR860771}). For this reason, it is only trivial module $k_G$ that can be composition factor of ${}_{kG}k(G/N)$, where ${}_{kG}k(G/N)$ means the left $kG$-module with its basis $G/N$. Hence we get the following isomorphisms of $kG$-modules:
    \begin{align*}
        \induc_N^G \restr_N^G S
         & \cong \induc_N^G ((\restr_N^G S)\otimes k_N) \\
         & \cong S\otimes \induc_N^G k_N                \\
         & \cong S \otimes k(G/N).
    \end{align*}
    Therefore, all composition factors of the module in the right-hand side are isomorphic to $S\otimes k_G\cong S$.
\end{proof}

\begin{corollary}\label{corollary only one simple in induc}
    Let $T$ be a simple $kN$-module. Suppose that $G/N$ is a $p$-group, then the $kG$-module $\induc^G_N T$ has only one sort of simple module which can be a composition factor.
\end{corollary}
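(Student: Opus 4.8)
The plan is to derive the statement directly from Lemma \ref{Lemma simple ind res} together with Clifford's theorem; the corollary should follow quickly, the only genuinely new input being that $T$, although not necessarily the restriction of a simple $kG$-module, is always a direct summand of the restriction of a well-chosen one.

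First I would choose a simple $kG$-module $S$ appearing as a quotient of the nonzero module $\induc_N^G T$. By Frobenius reciprocity (Proposition \ref{Theorem: Frobenius and projective}), the nonvanishing of $\Hom_{kG}(\induc_N^G T,S)$ yields a nonzero, hence injective, $kN$-homomorphism $T\hookrightarrow \restr_N^G S$. Invoking Clifford's theorem (Theorem \ref{Clifford's Theorem of simple module}), the module $\restr_N^G S$ is semisimple over $kN$, so this inclusion splits and therefore $T\mid \restr_N^G S$.

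Next I would apply the induction functor $\induc_N^G$ to a decomposition $\restr_N^G S\cong T\oplus W$: since $\induc_N^G$ is additive and exact, $\induc_N^G T$ becomes a direct summand, and in particular a subquotient, of $\induc_N^G\restr_N^G S$. By Lemma \ref{Lemma simple ind res} the latter module has $S$ as its only possible simple composition factor, whence the same is true of $\induc_N^G T$, which is exactly the claim.

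I do not expect a serious obstacle: the substance is in Lemma \ref{Lemma simple ind res}, and the present corollary is a short bootstrap from it. The one step needing a little care is the very first one — arranging that $T$ embeds into (and hence, by Clifford, is a summand of) the restriction of a simple $kG$-module $S$ — since without it one cannot reduce to a module of the form $\induc_N^G\restr_N^G S$. It is worth noting that the hypothesis that $G/N$ be a $p$-group is used only through the appeal to Lemma \ref{Lemma simple ind res}; every other ingredient is valid for an arbitrary normal subgroup $N$.
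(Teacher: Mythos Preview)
Your argument is correct and follows essentially the same route as the paper's proof: pick a simple quotient $S$ of $\induc_N^G T$, use Frobenius reciprocity to embed $T$ into $\restr_N^G S$, and then conclude via Lemma~\ref{Lemma simple ind res}. The one superfluous step is your appeal to Clifford's theorem to split off $T$ as a direct summand of $\restr_N^G S$; since $\induc_N^G$ is exact, the mere inclusion $T\hookrightarrow \restr_N^G S$ already exhibits $\induc_N^G T$ as a submodule of $\induc_N^G\restr_N^G S$, which is all that is needed.
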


\begin{proof}
    We can take a simple $kG$-module $S$ satisfying $\Hom_{kG}(\induc_N^G T,S) \neq 0$. Since $\Hom_{kG}(\induc_N^G T,S)\cong \Hom_{kN}(T,\restr_N^G S)$ by Proposition \ref{Theorem: Frobenius and projective},
    the restriction module $\restr_N^G S$ has a submodule isomorphic to $T$.
    Hence the induced module $\induc_N^G T$ is isomorphic to a submodule of $\induc_N^G \restr_N^G S$. Therefore, by Lemma \ref{Lemma simple ind res}, the conclusion follows.
\end{proof}

\begin{lemma}\label{lemma indec proj}
    Let $N$ be a normal subgroup of $G$ such that $G/N$ is a $p$-group and $S$ a simple $kG$-module. Assume that $\restr_N^G S$ is a simple $kN$-module and denote this by $T$. Then the following hold:
    \begin{enumerate}
        \item $\decompgp_G(P(T))=G$,
        \item $\induc_N^G P(T) \cong P(S)$,
        \item $\restr_N^G P(S)\cong P(T)^{\oplus |G:N|}$.
    \end{enumerate}
\end{lemma}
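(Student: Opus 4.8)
The plan is to prove the three statements in the natural logical order, deducing (2) from (1), and (3) from (2) together with Mackey's formula. Throughout, write $|G:N|=p^d$; since $G/N$ is a $p$-group, $kG$ and $k N$ are symmetric, and restriction and induction of projectives are projective by Proposition~\ref{Theorem: Frobenius and projective}.

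For (1), first I would observe that $P(T)$ is the projective cover of $T=\restr_N^G S$. Since $\decompgp_G(T)=G$ (because $\restr_N^G S$ is $G$-stable, being the restriction of a $kG$-module), for each $g\in G$ one has $gT\cong T$, hence $gP(T)\cong P(gT)\cong P(T)$ as $kN$-modules, using that $U\mapsto gU$ is an exact autoequivalence of $kN\lmod$ sending projective covers to projective covers. This gives $\decompgp_G(P(T))=G$.

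For (2), the strategy is to identify $\induc_N^G P(T)$ as the projective cover of $S$. Because $\induc_N^G$ sends projectives to projectives, $\induc_N^G P(T)$ is a projective $kG$-module; because $G/N$ is a $p$-group, Green's indecomposability theorem (Theorem~\ref{Green's indecomposability theorem}) shows it is \emph{indecomposable}, hence isomorphic to $P(S')$ for a unique simple $S'$. To pin down $S'$, apply Frobenius reciprocity: $\Hom_{kG}(\induc_N^G P(T),S')\cong\Hom_{kN}(P(T),\restr_N^G S')\neq 0$, so $T$ is a composition factor (indeed a quotient) of $\restr_N^G S'$; on the other hand $\Top(\induc_N^G P(T))\cong\induc_N^G\Top(P(T))=\induc_N^G T$ modulo radical, and by Corollary~\ref{corollary only one simple in induc} the only simple appearing in $\induc_N^G T$ is the unique $S$ with $\restr_N^G S$ having $T$ as a composition factor — which by Proposition~\ref{unique extension of simple module} is exactly our $S$ (as $\decompgp_G(T)=G$). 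Comparing, $S'\cong S$, so $\induc_N^G P(T)\cong P(S)$. The main obstacle here is the bookkeeping to be sure the top of $\induc_N^G P(T)$ is $S$ and not some other simple; this is where Lemma~\ref{Lemma simple ind res}, Corollary~\ref{corollary only one simple in induc} and Proposition~\ref{unique extension of simple module} do the real work, and I would spell out that $\induc_N^G T$ has simple top $S$ precisely because its unique simple constituent is $S$.

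For (3), combine (2) with Mackey's decomposition formula. By Remark~\ref{Remark: normal mackey} (the case of a normal subgroup), $\restr_N^G\induc_N^G P(T)\cong\bigoplus_{g\in[G/N]}gP(T)$, and by (1) each $gP(T)\cong P(T)$, so the right-hand side is $P(T)^{\oplus|G:N|}$. By (2) the left-hand side is $\restr_N^G P(S)$. Hence $\restr_N^G P(S)\cong P(T)^{\oplus|G:N|}$, completing the proof. I expect (3) to be essentially immediate once (1) and (2) are in hand; the substantive step is (2), and within it the identification of the simple top.
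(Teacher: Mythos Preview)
Your proof is correct and for parts (1) and (3) is essentially identical to the paper's. For part (2), however, you work harder than necessary: the paper simply observes that $\induc_N^G P(T)$ is indecomposable projective (by Green's theorem and Proposition~\ref{Theorem: Frobenius and projective}) and then computes directly via Frobenius reciprocity that
\[
\Hom_{kG}(\induc_N^G P(T),S)\cong\Hom_{kN}(P(T),\restr_N^G S)=\Hom_{kN}(P(T),T)\neq 0,
\]
which already forces $\induc_N^G P(T)\cong P(S)$, since an indecomposable projective is the projective cover of any simple it maps onto nontrivially. Your detour through an unknown $S'$, the top of $\induc_N^G T$, Corollary~\ref{corollary only one simple in induc} and Proposition~\ref{unique extension of simple module} is valid but superfluous---plugging in $S$ itself from the start avoids all of it.
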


\begin{proof}
    The assumption implies that $\decompgp_G(T)=G$ by Theorem \ref{Clifford's Theorem of simple module}. Hence, for any $g\in G$,
    we have $gT\cong T$, which implies that $gP(T)\cong P(gT)\cong P(T)$ and the first assertion is proved.
    Since the induced module $\induc_N^G P(T)$ is an indecomposable projective module by Proposition \ref{Theorem: Frobenius and projective} and Theorem \ref{Green's indecomposability theorem}, and
    \begin{equation*}
        \Hom_{kG}(\induc_N^G P(T),S)\cong \Hom_{kN}(P(T),\restr_N^G S)=\Hom_{kN}(P(S),S)\neq 0,
    \end{equation*}
    the second assertion is proved.
    The third assertion is trivial by previous two assertions and Remark \ref{Remark: normal mackey}.
\end{proof}

\subsection{Block theory}

We recall the definition of blocks of group algebras. Let $G$ be a finite group. The group algebra $kG$ has a unique decomposition
\begin{equation*}
    kG=B_1\times \cdots \times B_l,
\end{equation*}
into a direct product of subalgebras $B_i$ each of which is indecomposable as an algebra.
We call each indecomposable direct product component $B_i$ a block of $kG$ and the above decomposition the block decomposition. We remark that any block $B_i$ is an ideal of $kG$.

For any indecomposable $kG$-module $U$, there exists a unique block $B_i$ of $kG$ such that $U=B_iU$ and $B_jU=0$ for all $j\neq i$. Then we say that $U$ lies in the block $B_i$ or simply $U$ is a $B_i$-module. We denote by $B_0(kG)$ the principal block of $kG$ in which the trivial $kG$-module lies.

We recall the definition and basic properties of defect groups of blocks.
\begin{definition}
    Let $B$ be a block of $kG$.
    A {\it defect group} $D$ of $B$ is a minimal subgroup of $G$ satisfying the following condition:
    the $B$-bimodule epimorphism
    \begin{align*}
        \mu_D: B\otimes_{kD}B    & \rightarrow B             \\
        \beta _1\otimes \beta _2 & \mapsto \beta _1 \beta _2
    \end{align*}
    is a split epimorphism.
\end{definition}

\begin{proposition}[{\cite[Chapter 4, 5]{MR860771}}]
    Let $B$ be a block of $kG$ and $D$ a defect group of $B$. Then the following hold:
    \begin{itemize}
        \item $D$ is a $p$-subgroup of $G$ and the set of all defect groups of $B$ forms the conjugacy class of $D$ in $G$.
        \item $D$ is a cyclic group if and only if the algebra $B$ is finite representation type.
        \item If $B$ is the principal block of $kG$, then $D$ is a Sylow $p$-subgroup of $G$.
    \end{itemize}
\end{proposition}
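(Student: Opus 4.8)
The proposition collects three classical facts about defect groups, and the unifying tool I would use is the theory of relative projectivity and vertices applied to $B$ viewed as a $k[G\times G]$-module via $(g_1,g_2)\cdot b=g_1 b g_2^{-1}$. For the first bullet, the plan is to reinterpret the defining splitting condition: by Higman's relative-projectivity criterion, the multiplication map $\mu_H\colon B\otimes_{kH}B\to B$ is a split epimorphism of $B$-bimodules if and only if $B$ is relatively $\Delta H$-projective as a $k[G\times G]$-module, where $\Delta H=\{(h,h):h\in H\}$. Hence a subgroup $D$ is a defect group of $B$ precisely when $\Delta D$ is a vertex of the $k[G\times G]$-module $B$. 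Since $B$ is a direct summand of $kG$ both as a left and as a right $kG$-module, a vertex of $B$ intersects $G\times 1$ and $1\times G$ trivially and is forced to have the diagonal form $\Delta D$ for some $p$-subgroup $D\le G$. The standard facts that vertices are $p$-subgroups and are unique up to $(G\times G)$-conjugacy then translate directly into the assertion that $D$ is a $p$-subgroup of $G$ and that all defect groups of $B$ form a single $G$-conjugacy class; the only genuine work is making the Higman-criterion reformulation precise.

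For the third bullet I would compute the vertex of the trivial module $k_G$. As $[G:P]$ is prime to $p$ for a Sylow $p$-subgroup $P$ of $G$, the module $k_G$ is a direct summand of $\induc_P^G k_P$ and hence relatively $P$-projective; on the other hand $\restr_P^G k_G=k_P$ is non-projective whenever $P\ne 1$, so $P$ is a vertex of $k_G$. Using the standard fact (again a consequence of the bimodule picture above) that the vertex of an indecomposable module lying in a block is contained in a defect group of that block, we obtain $P\le D$ up to conjugacy for a defect group $D$ of the principal block; since $D$ is a $p$-subgroup, $D=P$.

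The second bullet is the deep one, and it is where the real difficulty lies. For the implication ``finite representation type $\Rightarrow$ $D$ cyclic'' I would argue contrapositively: if $D$ is non-cyclic, then $kD$ is of infinite representation type by Higman's theorem, and then, passing through Brauer's first main theorem to the Brauer correspondent in $kN_G(D)$, the description of blocks with normal defect group, and the Green correspondence, one produces infinitely many pairwise non-isomorphic indecomposable $B$-modules of vertex $D$, so that $B$ is of infinite representation type. For the converse ``$D$ cyclic $\Rightarrow$ finite representation type'' there is no short route: this is precisely the structure theory of blocks with cyclic defect group (Brauer, Dade, Janusz, Kupisch), which identifies $B$ with a Brauer tree algebra (or a simple algebra) and in particular exhibits its finitely many indecomposable modules. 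I would invoke this theory rather than reprove it, exactly as the cited reference \cite[Chapter 5]{MR860771} does; the entire weight of the proposition sits here.
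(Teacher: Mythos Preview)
The paper does not actually prove this proposition: it is stated purely as a citation to Alperin's textbook \cite[Chapter 4, 5]{MR860771}, with no accompanying \texttt{proof} environment. So there is nothing in the paper to compare your argument against.

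That said, your sketch is a reasonable outline of the classical arguments. The reinterpretation of defect groups via vertices of $B$ as a $k[G\times G]$-module is standard and correct, as is the computation of the vertex of $k_G$ for the third bullet. You are also right that the second bullet carries essentially all the weight and that the direction ``$D$ cyclic $\Rightarrow$ finite type'' is the Brauer--Dade structure theory, which one invokes rather than reproves. One small point: in the first bullet, the passage from ``vertices of $B$ in $G\times G$ are $(G\times G)$-conjugate'' to ``defect groups of $B$ form a single $G$-conjugacy class'' requires a short extra argument (conjugating $\Delta D$ by an arbitrary element of $G\times G$ need not land in a diagonal subgroup, so one uses that the vertex can be chosen inside $\Delta G$ and that $N_{G\times G}(\Delta G)$-conjugacy of subgroups of $\Delta G$ amounts to $G$-conjugacy); this is routine but worth making explicit.
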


\begin{theorem}[{\cite[Corollary 14.6, Theorem 17.1 and proof of Lemma 19.3]{MR860771}}]\label{Theorem:Dade Brauer Tree}
    Let $B$ be a block of $kG$ and $D$ a defect group of $B$.
    \begin{itemize}
        \item $D$ is trivial group if and only if $B$ is a simple algebra.
        \item $D$ is a non-trivial cyclic group if and only if $B$ is a Brauer tree algebra with $e$ edges and  multiplicity $(|D|-1)/e$ where $e$ is a devisor of $p-1$.
    \end{itemize}
\end{theorem}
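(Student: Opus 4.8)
The plan is to treat the two bullet points in turn, and within each the two implications, reducing as far as possible to facts already recorded above — especially the Proposition saying that $D$ is cyclic if and only if $B$ has finite representation type — and to standard block-theoretic machinery.

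For the first bullet I would argue via separability of $B$. If $D$ is trivial, then by the definition of a defect group the $B$-bimodule epimorphism $\mu_D\colon B\otimes_k B\to B$ is split (here $kD=k$), so $B$ is a separable $k$-algebra; a separable algebra over the algebraically closed field $k$ is a finite product of matrix algebras, and a block is indecomposable as an algebra, whence $B\cong M_n(k)$ for some $n$, i.e.\ $B$ is simple. Conversely, if $B\cong M_n(k)$ is simple then it is semisimple, hence separable, so the multiplication map $B\otimes_k B\to B$ already splits as $B$-bimodules; since $D$ is a \emph{minimal} subgroup with the splitting property and the trivial subgroup has it, $D$ must be trivial. (Equivalently, one can phrase this as ``$B$ has defect zero if and only if its unique simple module is projective''.)

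For the second bullet, the easy direction is $(\Leftarrow)$: a Brauer tree algebra is of finite representation type but is not semisimple, so the cited Proposition on defect groups forces $D$ to be cyclic, and the first bullet then gives $D\neq 1$. The numerical assertions come from the combinatorics of the tree together with Dade's structure theory: the number of edges of the Brauer tree equals the number $e$ of isomorphism classes of simple $B$-modules, one has the classical relation $|D|=em+1$ linking the order of $D$, the number of edges, and the multiplicity $m$ on the exceptional vertex (so $m=(|D|-1)/e$), and the divisibility $e\mid p-1$ follows by identifying $e$ with the order of the inertial quotient of $B$ — a $p'$-group acting faithfully on the cyclic group $D$, hence embedding into $\operatorname{Aut}(D_1)\cong(\mathbb{Z}/p\mathbb{Z})^{\times}$, where $D_1$ is the unique subgroup of $D$ of order $p$.

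The genuine obstacle is the forward implication of the cyclic case, which is Dade's theorem, and I would not attempt to reprove it from scratch. The strategy one follows is: first invoke the cited Proposition so that $B$ has finite representation type; then analyze the indecomposable $B$-modules through their vertices and sources and the Green correspondence relative to $N_G(D_1)$, showing that the stable Auslander--Reiten quiver is a tube, that the projective indecomposable modules are biserial, and that their radical-layer structure is governed by a planar embedded tree; labelling the simple modules by the edges of that tree and recording the exceptional multiplicity then produces the Brauer tree together with the stated invariants. Establishing this structural picture is the hard part of the statement, and in the present paper it is (rightly) delegated to the cited references.
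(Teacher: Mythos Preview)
The paper does not supply a proof of this theorem at all: it is stated with a citation to Alperin's textbook (Corollary~14.6, Theorem~17.1, and the proof of Lemma~19.3 there) and then used as a black box. So there is no in-paper argument to compare your proposal against.

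Your outline is nonetheless a faithful sketch of how those cited results are organized. The separability argument for the first bullet is correct and is exactly how defect zero is characterized; your handling of the converse direction of the second bullet via finite representation type plus the first bullet is fine; and you rightly isolate the forward implication of the cyclic case as Dade's theorem, which is precisely what the paper defers to Alperin. One small organizational point: the numerical assertions ($e\mid p-1$ and multiplicity $(|D|-1)/e$) are really part of the \emph{forward} implication --- they describe the Brauer tree one obtains from a cyclic-defect block --- so they belong with the Dade argument rather than with the easy $(\Leftarrow)$ direction. But this does not affect correctness, and since the paper itself offers no proof, your proposal already goes further than the paper does.
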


Let $N$ be a normal subgroup of $G$, $b$ a block of $kN$ and $B$ a block of $kG$. We say that $B$ covers $b$ if $1_B 1_b\neq 0$. We denote by $\inertiagp_G(b)$ the {\it inertial group} of $b$ in $G$, that is $\inertiagp_G(b):=\left\{ g \in G \mid gbg^{-1} = b \right\}$.

\begin{remark}[{See \cite[section 15]{MR860771}}]\label{Remark:cover}
    With the above notation, the following are equivalent:
    \begin{enumerate}
        \item The block $B$ covers $b$.
        \item There exists a non-zero $B$-module $U$ such that $\restr_N^G U$ has a non-zero summand lying in $b$.
        \item For any non-zero $B$-module $U$, there exists a non-zero summand of $\restr_N^G U$ lying in $b$.
    \end{enumerate}
\end{remark}

\begin{remark}
    In particular, the principal block $B_0(kN)$ is covered by $B_0(kG)$ and $\inertiagp_G(B_0(kN))=G$.
\end{remark}

\begin{theorem}[{Clifford's Theorem for blocks \cite[Theorem 15.1, Lemma 15.3]{MR860771}}]\label{Clifford's Theorem block ver}
    Let $N$ be a normal subgroup of $G$, $b$ a block of $kN$, $B$ a block of $kG$ covering $b$ and $U$ a $B$-module. Then the following hold:
    \begin{enumerate}
        \item The set of blocks of $kN$ covered by $B$ equals to the conjugacy class of $b$ in $G$; that is, $$\left\{ b' \mid \text{$b'$ is a block of $kN$ covered by $B$} \right\}=\left\{ gbg^{-1} \mid g \in G \right\}.$$
        \item We get the following isomorphism of $kN$-modules:
              \begin{equation*}
                  \restr_N^G U \cong \bigoplus_{g \in [G/\inertiagp_G(b)]}gbU.
              \end{equation*}
        \item We can consider $bU$ as a $k\inertiagp_G(b)$-module and $U \cong \induc_{\decompgp_G(b)}^G bU$.
    \end{enumerate}
\end{theorem}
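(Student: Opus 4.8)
The strategy is to route everything through the single central idempotent $e:=\sum_{g\in[G/\inertiagp_G(b)]}gbg^{-1}$ of $kG$, the sum of $b$ over its $G$-conjugacy class, and then read off (1)--(3) from its basic properties. First I would isolate two elementary facts. Since $N$ is normal, conjugation by any $g\in G$ is a $k$-algebra automorphism of $kN$, hence it permutes the block idempotents of $kN$; and if $x\in Z(kN)$ satisfies $gxg^{-1}=x$ for all $g\in G$, then $x\in Z(kG)$, which one checks by writing $kG=\bigoplus_{g\in[G/N]}(kN)g$ and computing directly. Consequently the distinct conjugates $gbg^{-1}$, for $g$ ranging over $[G/\inertiagp_G(b)]$, are pairwise orthogonal primitive idempotents of $Z(kN)$, so $e$ is an idempotent; being $G$-stable it lies in $Z(kG)$, and so does $1-e$ (it is the sum of the block idempotents of $kN$ outside the $G$-orbit of $b$).

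For (1): as $1_B$ is central in $kG$ and $1_Bb\neq0$ we get $1_B(gbg^{-1})=g(1_Bb)g^{-1}\neq0$ for every $g\in G$, so $B$ covers every $G$-conjugate of $b$. Conversely write $1_B=1_Be+1_B(1-e)$, a sum of orthogonal central idempotents of $kG$; since $b=be$ we have $1_Be\neq0$, and primitivity of $1_B$ in $Z(kG)$ forces $1_B(1-e)=0$, hence $1_B1_{b'}=0$ for every block $b'$ of $kN$ not in the orbit of $b$. By the definition of covering this is exactly the asserted equality of sets, and moreover $1_B=1_Be$, so $e$ acts as the identity on every $B$-module.

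For (2) and (3): since $U$ is a $B$-module, $eU=U$, and because the $gbg^{-1}$ are orthogonal central idempotents of $kN$ summing to $e$ we obtain the $kN$-module decomposition $\restr_N^GU=\bigoplus_{g\in[G/\inertiagp_G(b)]}(gbg^{-1})U$. Using the identity $gb=(gbg^{-1})g$ in $kG$ one checks that $(gbg^{-1})U=g(bU)$ as subsets of $U$ and that $v\mapsto gv$ is a $kN$-isomorphism from the conjugate module $gbU$ onto $(gbg^{-1})U$; this proves (2). The same identity shows $g\cdot(bu)=b(gu)$ whenever $g\in\inertiagp_G(b)$, so $bU$ is $\inertiagp_G(b)$-stable, and since $b$ lies in $Z(k\inertiagp_G(b))$ (again by the invariance/centrality fact above, applied to $\inertiagp_G(b)\supseteq N$), $bU=b\,\restr_{\inertiagp_G(b)}^GU$ is naturally a $k\inertiagp_G(b)$-module. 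Finally I would consider the natural $kG$-homomorphism $\varphi\colon\induc_{\inertiagp_G(b)}^GbU\to U$, $g\otimes v\mapsto gv$; choosing coset representatives $g_i$ of $\inertiagp_G(b)$ in $G$ and using $kG=\bigoplus_ig_i\,k\inertiagp_G(b)$ we get $\induc_{\inertiagp_G(b)}^GbU=\bigoplus_i(g_i\otimes bU)$, and $\varphi$ carries $g_i\otimes bU$ bijectively onto $(g_ibg_i^{-1})U$ because $g_i$ acts invertibly on $U$. Matching this against the decomposition from (2) shows $\varphi$ is bijective, hence an isomorphism, which is (3).

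The only genuinely delicate points are the implication ``a $G$-invariant central idempotent of $kN$ is central in $kG$'' and the bookkeeping that reconciles $\varphi$ with the two direct-sum decompositions (in particular the identification of $(gbg^{-1})U$ with the conjugate module $gbU$); the remainder is formal manipulation with idempotents and adjunctions. As an alternative to the final step one could instead combine the normal-subgroup form of Mackey's formula (Remark~\ref{Remark: normal mackey}), which gives $\restr_N^G\induc_{\inertiagp_G(b)}^GbU\cong\restr_N^GU$, with the surjectivity of $\varphi$ to conclude, but the summand-by-summand argument seems the most transparent.
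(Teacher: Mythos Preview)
The paper does not give its own proof of this statement: it is quoted as \cite[Theorem 15.1, Lemma 15.3]{MR860771} and used as a black box. So there is nothing in the paper to compare your argument against.

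That said, your argument is correct and is essentially the standard textbook proof (as in Alperin or Nagao--Tsushima): form the $G$-invariant central idempotent $e=\sum_{g\in[G/\inertiagp_G(b)]}gbg^{-1}$, use primitivity of $1_B$ in $Z(kG)$ to get $1_Be=1_B$ and hence (1), decompose $\restr_N^GU$ along the orthogonal idempotents $gbg^{-1}$ for (2), and identify the counit $\induc_{\inertiagp_G(b)}^GbU\to U$ with this decomposition for (3). The two points you flag as delicate are exactly the right ones, and your treatment of them is fine; in particular your verification that $v\mapsto gv$ gives a $kN$-isomorphism from the conjugate module $gbU$ onto the summand $(gbg^{-1})U$ matches the paper's convention for conjugate modules.
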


\begin{proposition}[{See \cite[Theorem 6.8.3]{MR3821517}}]\label{Morita equivalence covering block}
    Let $N$ be a normal subgroup of $G$, $b$ a block of $kN$ and $H$ a subgroup of $G$ satisfying the condition $N \leq H \leq \inertiagp_G(b)$. Let $\beta$ be a block of $kH$ and $B$ a block of $kG$ both covering $b$. If $B$ is a unique block of $kG$ covering $b$, then the following hold:
    \begin{enumerate}
        \item The induction functor $\induc_H^G \colon kH\lmod \rightarrow kG\lmod$ restricts to a faithful functor $$\induc_H^G \colon \beta\lmod \rightarrow B\lmod.$$\label{2020-03-25 15:17:09}
        \item In addition, if it holds that $H=\inertiagp_G(b)$, then the induction functor in \eqref{2020-03-25 15:17:09} is a Morita equivalence.
    \end{enumerate}
\end{proposition}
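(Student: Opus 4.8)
The plan is to establish (1) and (2) separately; part (2) is essentially the Fong--Reynolds reduction, and I would deduce it from the two-sided adjunction between $\induc_H^G$ and $\restr_H^G$ together with the Clifford-theoretic results above. For (1) there are two claims: that $\induc_H^G$ carries $\beta\lmod$ into $B\lmod$, and that the resulting functor is faithful. Faithfulness is formal --- Mackey's formula (Theorem \ref{Mackey's decomposition formula}) with $K=H$ has the trivial double coset contributing a direct summand isomorphic to $V$, so $V\mid\restr_H^G\induc_H^G V$ for every $kH$-module $V$, whence $\induc_H^G$ is faithful on $kH\lmod$ and a fortiori on the full subcategory $\beta\lmod$. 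For the first claim, since $N\le H\le\inertiagp_G(b)$ every $H$-conjugate of $b$ equals $b$, so by Clifford's theorem for blocks (Theorem \ref{Clifford's Theorem block ver}) $b$ is the only block of $kN$ covered by $\beta$; hence $\restr_N^H V$ lies in $b$ for every $\beta$-module $V$, and by Remark \ref{Remark: normal mackey} the $kN$-module $\restr_N^G\induc_H^G V\cong\bigoplus_{g\in[G/H]}g\restr_N^H V$ is a direct sum of modules lying in the blocks $gbg^{-1}$. Consequently, if an indecomposable summand of $\induc_H^G V$ lies in a block $B'$ of $kG$, then $B'$ covers some $gbg^{-1}$ by Remark \ref{Remark:cover}, hence covers $b$ by Theorem \ref{Clifford's Theorem block ver}(1), and by the uniqueness hypothesis $B'=B$; thus $\induc_H^G V\in B\lmod$.

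For (2), assume moreover $H=\inertiagp_G(b)$. First I would note that the uniqueness of $B$ forces $\beta$ to be the unique block of $kH$ covering $b$: applying (1) to any block $\beta'$ of $kH$ covering $b$ gives $\induc_H^G\beta'=kG1_{\beta'}\in B\lmod$, hence $1_B1_{\beta'}=1_{\beta'}$, while the relative trace identity $1_B=\sum_{g\in[G/H]}g1_bg^{-1}=\sum_{\beta'}\bigl(\sum_{g\in[G/H]}g1_{\beta'}g^{-1}\bigr)$ displays the primitive central idempotent $1_B$ of $kG$ as a sum of pairwise orthogonal central idempotents of $kG$ indexed by the blocks $\beta'$ of $kH$ covering $b$, so there is just one such $\beta'$. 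Next set $F:=\induc_H^G\colon\beta\lmod\to B\lmod$ and $E:=1_\beta\restr_H^G(-)\colon B\lmod\to\beta\lmod$, both exact; Frobenius reciprocity (Proposition \ref{Theorem: Frobenius and projective}), together with the fact that a homomorphism out of a $\beta$-module lands in the $\beta$-part of its target, yields $\Hom_B(FV,U)\cong\Hom_{kH}(V,\restr_H^G U)\cong\Hom_\beta(V,EU)$, so $E$ is right adjoint to $F$. The crucial point is that the unit $\mathrm{id}\to EF$ is an isomorphism: by Mackey's formula $\restr_H^G\induc_H^G V\cong V\oplus\bigoplus_{g\notin H}\induc_{H\cap gHg^{-1}}^H\restr_{H\cap gHg^{-1}}^{gHg^{-1}}gV$, and for $g\notin H$ the summand $W_g$ restricts over $N$ to a direct sum of modules lying in blocks that are $H$-conjugate to $gbg^{-1}$; since $H=\inertiagp_G(b)$ and $g\notin H$, none of these blocks is $b$, so $W_g$ has no summand in a block of $kH$ covering $b$ and in particular $1_\beta W_g=0$. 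Hence $EF\cong\mathrm{id}$ compatibly with the unit, so $F$ is fully faithful.

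It remains to show $F$ is dense, and here is where I expect the real work to lie. The object $F(\beta)=\induc_H^G\beta=kG1_\beta$ is a projective $B$-module by Proposition \ref{Theorem: Frobenius and projective}(2), and it is a generator of $B\lmod$: for a simple $B$-module $S$ one has $\Hom_B(kG1_\beta,S)\cong 1_\beta\restr_H^G S$, which is nonzero because, by Clifford's theorems (Theorems \ref{Clifford's Theorem of simple module} and \ref{Clifford's Theorem block ver}), $\restr_N^G S$ has a summand lying in $b$, so $\restr_H^G S$ must have a summand in some block of $kH$ covering $b$, necessarily $\beta$ by the reduction above. Hence $kG1_\beta$ is a progenerator of $B$, so every $B$-module $M$ admits a presentation $F(\beta^{\oplus n_1})\xrightarrow{\psi}F(\beta^{\oplus n_0})\to M\to 0$; as $F$ is full, $\psi=F(\phi)$ for some $\phi$, and as $F$ is right exact, $M$ is isomorphic to $F$ applied to the cokernel of $\phi$. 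Therefore $F$ is an equivalence, and since $kG1_\beta\otimes_\beta V\cong\induc_H^G V$ this equivalence is implemented by $\induc_H^G$ itself. The main obstacle is precisely this density step --- equivalently, the fact that $1_\beta$ annihilates no simple $B$-module --- which rests on Clifford theory and on the uniqueness of $\beta$ (i.e. on the Fong--Reynolds correspondence underlying the whole statement); the verification that the non-identity Mackey summands are killed by $1_\beta$ is the technical heart on the fully faithful side. One could alternatively run the argument through the $(B,\beta)$-bimodule $1_BkG1_\beta$ and a dimension count, but the functorial version fits the tools assembled above.
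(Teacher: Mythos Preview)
Your proof of (1) is essentially the paper's own argument: the paper also shows $\restr_N^H V$ is a $b$-module via $\inertiagp_H(b)=H$ and Theorem \ref{Clifford's Theorem block ver}, then uses $\restr_N^G W\mid\bigoplus_{g\in[G/H]}g\restr_N^H V$ to see that every indecomposable summand $W$ of $\induc_H^G V$ lies in a block covering $b$, hence in $B$ by uniqueness; faithfulness is noted parenthetically there just as you obtain it from the trivial Mackey summand.

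For (2) the paper does \emph{not} give an argument at all: it simply invokes \cite[Theorem 5.5.12]{MR998775} (the Fong--Reynolds theorem) and stops. Your route is therefore genuinely more detailed. Your uniqueness-of-$\beta$ step is correct once one observes that $1_b=\sum_{\beta'}1_{\beta'}$ in $Z(kH)$ (because $\inertiagp_H(b)=H$) and that each $e_{\beta'}:=\sum_{g\in[G/H]}g1_{\beta'}g^{-1}$ is a nonzero central idempotent of $kG$ (centrality since $1_{\beta'}\in Z(kH)$, orthogonality since $1_{\beta'}=1_{\beta'}1_b$ and the $g1_bg^{-1}$ are orthogonal); primitivity of $1_B$ then forces a single $\beta'$. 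Your Mackey analysis killing $1_\beta W_g$ for $g\notin H$ is likewise correct, using $N\le H\cap gHg^{-1}$ and $hgbg^{-1}h^{-1}\ne b$ for $h\in H$, $g\notin H$. The density argument via $kG1_\beta$ being a progenerator is standard and correct. What your approach buys is a self-contained proof inside the paper's toolkit (Proposition \ref{Theorem: Frobenius and projective}, Theorems \ref{Mackey's decomposition formula} and \ref{Clifford's Theorem block ver}, Remark \ref{Remark:cover}); what the paper's citation buys is brevity, since this is precisely the classical Fong--Reynolds reduction and the authors are content to treat it as background.
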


\begin{proof}
    (See \cite[Proof of Lemma 19.6]{MR860771}.)
    First we prove that the induced module $\induc_H^G V$ is a $B$-module for any $\beta$-module $V$.
    Since $\inertiagp_H(b)=\inertiagp_G(b)\cap H=H$, the restriction module $\restr_N^H V$ is a $b$-module  by Theorem \ref{Clifford's Theorem block ver}. Let $W$ be an indecomposable summand of $\induc_H^G V$ and $A_W$ a block of $kG$ which $W$ lies in. Since
    \begin{equation*}
        \restr_N^G W \mid \restr_N^G \induc_H^G V \cong \bigoplus _{g\in [G/H]} g\restr_N^H V,
    \end{equation*}
    and the restriction module $\restr_N^H V$ is a $b$-module, the block $A_W$ covers $b$. By the assumption of the uniqueness of the block of $kG$ covering $b$, we get $A_W=B$. Since the indecomposable summand $W$ of $\induc_H^G V$ is arbitrary, the first assertion is proven (since the induction functor $\induc_H^G$ is faithful). The second assertion follows directly from \cite[Theorem 5.5.12]{MR998775}.
\end{proof}

In the setting of Proposition \ref{Morita equivalence covering block}, we need the uniqueness of the block of $kG$ covering the block $b$ to consider the proposition. The following result assures that we can apply the proposition for the case where the quotient $G/N$ is a $p$-group.

\begin{proposition}[{See \cite[Corollary 5.5.6]{MR998775} or \cite[Proposition 6.8.11]{MR3821517}}]\label{p-power index covering uniqueness}
    Let $N$ be a normal subgroup of $G$ and $b$ a block of $kN$.
    If $G/N$ is a $p$-group, then there exists a unique block of $kG$ covering $b$.
\end{proposition}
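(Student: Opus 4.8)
The plan is to determine, module-theoretically, which simple $kG$-modules lie in blocks of $kG$ covering $b$, and to prove that they all lie in a single block; since every block of $kG$ covering $b$ contains at least one simple module, this forces the uniqueness. The essential ingredient---available precisely because $G/N$ is a $p$-group---is Green's indecomposability theorem (Theorem \ref{Green's indecomposability theorem}). For a simple $kN$-module $T$ lying in $b$, write $P(T)$ for its projective cover (which coincides whether $T$ is viewed as a $kN$-module or as a $b$-module). By Proposition \ref{Theorem: Frobenius and projective} the module $\induc_N^G P(T)$ is projective, and by Green's theorem it is indecomposable; hence $\induc_N^G P(T)\cong P(S_T)$ for a unique simple $kG$-module $S_T:=\Top(\induc_N^G P(T))$. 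Moreover $\restr_N^G\induc_N^G P(T)\cong\bigoplus_{g\in[G/N]}gP(T)$ by Remark \ref{Remark: normal mackey} has $P(T)\in b$ as a direct summand, so by Remark \ref{Remark:cover} the block of $kG$ containing $\induc_N^G P(T)$, and therefore the block containing $S_T$, covers $b$.

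I would then verify that $\{\,S_T\mid T\ \text{simple in}\ b\,\}$ is exactly the set of (isomorphism classes of) simple $kG$-modules lying in blocks of $kG$ covering $b$. One inclusion is the paragraph above. For the reverse inclusion, let $S$ be a simple $kG$-module lying in a block $B$ covering $b$; by Remark \ref{Remark:cover} the module $\restr_N^G S$ has a non-zero direct summand lying in $b$, hence a composition factor $T$ which is a simple $kN$-module lying in $b$. Then $\Hom_{kG}(\induc_N^G P(T),S)\cong\Hom_{kN}(P(T),\restr_N^G S)\neq 0$ by Proposition \ref{Theorem: Frobenius and projective}, and since $\induc_N^G P(T)=P(S_T)$ this forces $S\cong S_T$.

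Next I would show that all the modules $S_T$ lie in a single block of $kG$. Since $b$ is indecomposable as a $k$-algebra, any two simple $b$-modules are linked by a chain of simple $b$-modules in which consecutive terms have a non-zero $\Ext^1_{kN}$ in one direction or the other; so it suffices to prove that $S_T$ and $S_{T'}$ lie in the same block of $kG$ whenever $\Ext^1_{kN}(T,T')\neq 0$. Choose a non-split extension $0\to T'\to E\to T\to 0$ of $kN$-modules; then $E$ is indecomposable, so $\induc_N^G E$ is indecomposable by Green's theorem and lies in a single block $B_E$ of $kG$. Applying the exact functor $\induc_N^G$ gives an exact sequence $0\to\induc_N^G T'\to\induc_N^G E\to\induc_N^G T\to 0$, and the surjections $P(T)\twoheadrightarrow T$ and $P(T')\twoheadrightarrow T'$ induce surjections $P(S_T)=\induc_N^G P(T)\twoheadrightarrow\induc_N^G T$ and $P(S_{T'})=\induc_N^G P(T')\twoheadrightarrow\induc_N^G T'$, so $\induc_N^G T$ has simple top $S_T$ and $\induc_N^G T'$ has simple top $S_{T'}$. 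As $\induc_N^G T'$ is a submodule and $\induc_N^G T$ a quotient of $\induc_N^G E$, both $S_T$ and $S_{T'}$ occur as composition factors of $\induc_N^G E$, hence lie in $B_E$; running along the chains shows all the $S_T$ lie in one block $B_0$ of $kG$.

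To conclude: any block of $kG$ covering $b$ contains some simple $kG$-module, which by the second step is one of the $S_T$ and by the third step lies in $B_0$, so that block equals $B_0$; hence $B_0$ is the unique block of $kG$ covering $b$. The decisive points---and the ones I expect to require the most care---are the identification in the second step and the gluing in the third: one must be sure that the induced indecomposable module $\induc_N^G E$ genuinely carries both $S_T$ and $S_{T'}$ as composition factors. Both of these, together with the indecomposability of $\induc_N^G P(T)$, rest squarely on Green's indecomposability theorem, which is exactly where the hypothesis that $G/N$ is a $p$-group is used.
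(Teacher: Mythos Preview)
The paper does not give its own proof of this proposition; it simply cites Nagao--Tsushima and Linckelmann and moves on. Your argument is therefore not a reproduction of anything in the paper, but an independent, self-contained proof, and it is correct.

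A few remarks on the points you flagged as delicate. In the second step, your identification of the simples lying in blocks over $b$ with the set $\{S_T\}$ is clean: the only thing used beyond Frobenius reciprocity and Remark~\ref{Remark:cover} is that for a simple $kG$-module $S$ the restriction $\restr_N^G S$ is semisimple (Clifford), so ``has a summand in $b$'' and ``has a composition factor in $b$'' coincide. In the third step, the claim that two simple $b$-modules are connected by an $\Ext^1$-chain is the standard fact that the block decomposition of a finite-dimensional algebra matches the connected components of its $\Ext$-quiver; you might add one sentence justifying this, since it is the hinge of the gluing argument. The indecomposability of a non-split extension of simples is elementary, and Green then gives the indecomposability of its induction, so both $S_T$ and $S_{T'}$ genuinely appear as composition factors of $\induc_N^G E$ exactly as you say.

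By way of comparison, the textbook arguments you are replacing (e.g.\ the one in Linckelmann) typically work with the central idempotent $\sum_{g\in[G/I_G(b)]} g\,1_b\,g^{-1}$ directly, or pass through defect-group considerations, rather than through the module category. Your route is more elementary in that it stays entirely inside $kN\lmod$ and $kG\lmod$ and uses only tools already assembled in Section~\ref{Preliminaries: Modular rep-Theory}; its cost is the extra combinatorial step of chaining through $\Ext^1$. Either way, Green's indecomposability theorem is the place where the $p$-group hypothesis enters, and you have isolated that correctly.
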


\subsection{Lemmas}
In this section we give several lemmas which are used in proof of our main theorems.

\begin{lemma}\label{simple module lemma}
    Let $N$ be a normal subgroup of $G$, $b$ a block of $kN$ and $B$ a block of $kG$ covering $b$.
    If $G/N$ is a $p$-group and $\inertiagp_G(b)=G$, then the following conditions are equivalent.
    \begin{enumerate}
        \item For any simple $b$-module $T$, the inertial group $\decompgp_G(T)$ of $T$ in $G$ is equal to $G$.
        \item For any simple $B$-module $S$, the restriction module $\restr_N^G S$ is a simple $b$-module.
    \end{enumerate}
    In addition, if the conditions above hold, then the restriction functor $\restr_N^G$ induces a bijection between the set of isomorphism classes of simple $B$-modules and the one of simple $b$-modules.
\end{lemma}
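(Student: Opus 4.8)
The plan is to run Clifford theory on both sides, using two preliminary observations that follow from the hypotheses. Since $\inertiagp_G(b)=G$, Theorem \ref{Clifford's Theorem block ver} gives $\restr_N^G U\cong bU$ for every $B$-module $U$, so the restriction of a $B$-module is automatically a $b$-module. Conversely, for every $b$-module $V$ the induced module $\induc_N^G V$ is a $B$-module: by Remark \ref{Remark: normal mackey} we have $\restr_N^G\induc_N^G V\cong\bigoplus_{g\in[G/N]}gV$, and each summand $gV$ lies in the block $gbg^{-1}=b$ (because $g\in G=\inertiagp_G(b)$), so every indecomposable summand of $\induc_N^G V$ lies in a block of $kG$ covering $b$, and this block is $B$ by Proposition \ref{p-power index covering uniqueness}.

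To prove $(1)\Rightarrow(2)$, I would take a simple $B$-module $S$ and a simple $kN$-submodule $T$ of $\restr_N^G S$; then $T$ is a simple $b$-module, so $\decompgp_G(T)=G$ by $(1)$, and Theorem \ref{Clifford's Theorem of simple module} forces $\restr_N^G S\cong T^{\oplus r}$ for some $r$. To see $r=1$, note that $S$ is a simple quotient of $\induc_N^G T$, since $\Hom_{kG}(\induc_N^G T,S)\cong\Hom_{kN}(T,\restr_N^G S)\neq0$ by Frobenius reciprocity; on the other hand Proposition \ref{unique extension of simple module} provides a simple $kG$-module $S'$ with $\restr_N^G S'\cong T$, which is also a simple quotient of $\induc_N^G T$. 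By Corollary \ref{corollary only one simple in induc}, $\induc_N^G T$ has a unique simple composition factor, hence $S\cong S'$ and $\restr_N^G S\cong T$ is simple.

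To prove $(2)\Rightarrow(1)$, I would start from a simple $b$-module $T$ and take a simple quotient $S$ of the $B$-module $\induc_N^G T$; Frobenius reciprocity gives a nonzero, hence injective, $kN$-map $T\to\restr_N^G S$, and since $\restr_N^G S$ is simple by $(2)$ we get $\restr_N^G S\cong T$. For any $g\in G$, the genuine action of $g$ on $S$ yields a $kN$-isomorphism $g(\restr_N^G S)\cong\restr_N^G S$, so $gT\cong g(\restr_N^G S)\cong\restr_N^G S\cong T$, i.e. $\decompgp_G(T)=G$. For the final statement, assuming $(1)$ and $(2)$, the rule $S\mapsto\restr_N^G S$ lands in the simple $b$-modules by the first paragraph and $(2)$; it is injective because a simple $kG$-module is determined up to isomorphism by its restriction to $kN$ once that restriction is simple with full inertia, by Proposition \ref{unique extension of simple module}; and it is surjective because Proposition \ref{unique extension of simple module} extends a given simple $b$-module $T$ (which has $\decompgp_G(T)=G$ by $(1)$) to a simple $kG$-module, which then lies in $B$ by uniqueness of the covering block.

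The step I expect to be the main obstacle is the multiplicity-one assertion $r=1$ in $(1)\Rightarrow(2)$: Clifford's theorem alone only yields $\restr_N^G S\cong T^{\oplus r}$, and ruling out ramification genuinely relies on $G/N$ being a $p$-group, through Proposition \ref{unique extension of simple module} and Corollary \ref{corollary only one simple in induc}.
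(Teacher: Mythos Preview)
Your proof is correct and follows essentially the same route as the paper. The only cosmetic differences are that you invoke Corollary~\ref{corollary only one simple in induc} on $\induc_N^G T$ directly whereas the paper applies Lemma~\ref{Lemma simple ind res} to $\induc_N^G\restr_N^G S$ and $\induc_N^G\restr_N^G S'$, and in $(2)\Rightarrow(1)$ you verify $gT\cong T$ by hand via the action of $g$ on $S$ rather than citing Theorem~\ref{Clifford's Theorem of simple module}; both amount to the same argument.
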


\begin{proof}
    First, we prove that the first condition implies the second one. Let $S$ be a simple $B$-module. By Theorem \ref{Clifford's Theorem of simple module} and the assumption, there exists a simple $b$-module $T$ such that $\restr_N^G S \cong T^{\oplus r}$ for some $r\in \mathbb{N}$. Since $G/N$ is a $p$-group, by Proposition \ref{unique extension of simple module}, there exists a simple $kG$-module $S'$ such that $\restr_N^G S'$ is isomorphic to $T$. Since $G/N$ is a $p$-group again, by Lemma \ref{Lemma simple ind res}, the all composition factors of $\induc_N^G \restr_N^G S$ and $\induc_N^G \restr_N^G S'$ are isomorphic to one simple module. It implies that $S\cong S'$ by the Jordan--H\"older theorem and we conclude that the first assertion implies the second one.

    We next show that the second condition implies the first one. Let $T$ be a simple $b$-module. By Proposition \ref{Morita equivalence covering block}, \ref{p-power index covering uniqueness} the induced module $\induc_N^G T$ is a $B$-module and there exist a simple $B$-module $S$ such that $\Hom_B(\induc_N^G T, S)\neq 0$.
    By the assumption and Proposition \ref{Theorem: Frobenius and projective}, we have $T\cong \restr_N^G S$. Hence by Theorem \ref{Clifford's Theorem of simple module}, we have $\inertiagp_G(T)=G$.
    Therefore we have proven that the second assertion implies the first one.

    The remaining deduction is immediate from the fact that the above two conditions are equivalent and from Proposition \ref{unique extension of simple module}.
\end{proof}

\begin{lemma}\label{induction functor projective cover}
    Let $N$ be a normal subgroup of $G$ such that $G/N$ is a $p$-group.
    For any $kN$-module $V$, the following hold:
    \begin{enumerate}
        \item $P(\induc_N^G V)\cong \induc_N^G P(V)$,\label{2020-03-25 01:17:41}
        \item $\Omega(\induc_N^G V)\cong \induc_N^G \Omega(V)$,\label{2020-03-25 01:17:47}
        \item $\tau\induc_N^G V\cong \induc_N^G \tau V$.\label{2020-03-25 01:17:54}
    \end{enumerate}
\end{lemma}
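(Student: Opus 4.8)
The plan is to deduce all three parts from a single property of $\induc_N^G$: in addition to being exact and preserving projectivity (Proposition \ref{Theorem: Frobenius and projective}), it satisfies $\induc_N^G(\Rad W)\subseteq\Rad(\induc_N^G W)$ for every $kN$-module $W$. This inclusion is the only place where normality of $N$ is used, and the argument does not in fact need $G/N$ to be a $p$-group; part (3) will, however, use that group algebras are symmetric.

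First I would record the inclusion $J(kN)\subseteq J(kG)$. Since conjugation by any $g\in G$ is a $k$-algebra automorphism of $kN$, it fixes the ideal $J(kN)$, so $kG\cdot J(kN)=J(kN)\cdot kG$ is a two-sided ideal of $kG$; it is nilpotent because $(kG\cdot J(kN))^m=kG\cdot J(kN)^m=0$ for $m\gg 0$, and a nilpotent two-sided ideal lies in the Jacobson radical, whence $J(kN)\subseteq kG\cdot J(kN)\subseteq J(kG)$. Next, applying the exact functor $\induc_N^G$ to the inclusion $J(kN)W\hookrightarrow W$ yields an injection $\induc_N^G(\Rad W)\hookrightarrow\induc_N^G W$ whose image is $J(kN)\cdot(\induc_N^G W)$ --- because $g\otimes aw=(gag^{-1})(g\otimes w)$ with $gag^{-1}\in J(kN)$ --- and since $J(kN)\subseteq J(kG)$ this image is contained in $J(kG)\cdot(\induc_N^G W)=\Rad(\induc_N^G W)$.

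Granting that, part (1) is immediate. If $\pi\colon P(V)\to V$ is the projective cover, then $\ker\pi\subseteq\Rad P(V)$, so $\induc_N^G\pi$ is an epimorphism from the projective module $\induc_N^G P(V)$ with kernel $\induc_N^G(\ker\pi)\subseteq\induc_N^G(\Rad P(V))\subseteq\Rad(\induc_N^G P(V))$; an epimorphism from a projective module with superfluous kernel is a projective cover, which gives $\induc_N^G P(V)\cong P(\induc_N^G V)$. For part (2), using the convention $\Omega(W)=\ker(P(W)\twoheadrightarrow W)$, exactness of $\induc_N^G$ together with (1) gives $\induc_N^G\Omega(V)=\ker(\induc_N^G P(V)\to\induc_N^G V)=\ker(P(\induc_N^G V)\to\induc_N^G V)=\Omega(\induc_N^G V)$. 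For part (3), I would use that the symmetric algebras $kN$ and $kG$ have Nakayama functor isomorphic to the identity, so that $\tau\cong\Omega^2$ on modules without projective summands over either algebra; applying (2) twice then yields $\tau(\induc_N^G V)\cong\Omega^2(\induc_N^G V)\cong\induc_N^G\Omega^2(V)\cong\induc_N^G\tau(V)$.

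The only real obstacle is the radical inclusion of the second paragraph, i.e. the elementary fact $J(kN)\subseteq J(kG)$ for a normal subgroup; once that is secured, parts (1)--(3) follow formally. As an alternative for (3) that avoids invoking $\tau\cong\Omega^2$, one could instead check directly that $\induc_N^G$ commutes with minimal projective presentations, with $\Hom_{kG}(-,kG)$ on projective modules, and with the $k$-linear dual, so that it commutes with $\tau=D\operatorname{Tr}$; but the route through the symmetric structure is shorter.
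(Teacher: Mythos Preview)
Your argument is correct and in fact establishes a stronger statement than the one recorded in the paper: parts (1) and (2) hold whenever $N$ is normal in $G$, with no hypothesis on $G/N$. The paper takes a different route. It first reduces to indecomposable $V$, observes that $\induc_N^G P(V)\cong P(\induc_N^G V)\oplus Q$ and $\induc_N^G\Omega(V)\cong\Omega(\induc_N^G V)\oplus Q$ for some projective $kG$-module $Q$, and then applies Green's indecomposability theorem (Theorem~\ref{Green's indecomposability theorem}) to the indecomposable $kN$-module $\Omega(V)$ to conclude that $\induc_N^G\Omega(V)$ is indecomposable, forcing $Q=0$. This is precisely where the hypothesis that $G/N$ is a $p$-group enters the paper's proof. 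Your approach via the inclusion $J(kN)\subseteq J(kG)$ is more elementary and more general; the paper's approach has the virtue of reusing a tool that is already central to the main results. For part (3) the two arguments coincide: both invoke $\tau\cong\Omega^2$ over the symmetric algebras $kN$ and $kG$ and apply part (2) twice.
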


\begin{proof}
    It is clear that if \eqref{2020-03-25 01:17:41}, \eqref{2020-03-25 01:17:47} and \eqref{2020-03-25 01:17:54} are true for every indecomposable $kN$-module, then also they are true for every $kN$-module. Hence we only have to consider the case where $V$ is indecomposable.
    There exists a projective $kG$-module $Q$ such that $P(\induc_N^G V)\oplus Q \cong \induc_N^G P(V)$ and that $\Omega (\induc_N^G V)\oplus Q \cong \induc_N^G \Omega(V)$ Since the $kN$-module $\Omega V$ is indecomposable, the $kG$-module $\induc_N^G \Omega V$ is also indecomposable by Theorem \ref{Green's indecomposability theorem}.
    This implies that $Q=0$ and hence we have \eqref{2020-03-25 01:17:41} and \eqref{2020-03-25 01:17:47}. Since $\tau V\cong \Omega \Omega V$, \eqref{2020-03-25 01:17:54} follows immediately from \eqref{2020-03-25 01:17:47}.
\end{proof}

\begin{lemma}[{\cite[Lemma 2.2]{MR2592757}}]\label{lemma number of simple}
    Let $N$ be a normal subgroup of $G$ and $b$ a block of $kN$.
    If $G/N$ is a $p$-group and the number of simple $b$-modules is strictly smaller than $p$, then for any simple $b$-module $S$, it holds that $\decompgp_G(S)=\inertiagp_G(b)$.
\end{lemma}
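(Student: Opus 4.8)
The plan is to establish the equality by proving the two inclusions separately. For one direction no hypothesis is needed: if $g\in\decompgp_G(S)$ then $gS\cong S$, and since $gS$ lies in the block $gbg^{-1}$ of $kN$ while $S$ lies in $b$, the uniqueness of the block containing a given module forces $gbg^{-1}=b$, i.e. $g\in\inertiagp_G(b)$. Thus $\decompgp_G(S)\subseteq\inertiagp_G(b)$ always, with no use of the $p$-power index or the bound on the number of simples.

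For the reverse inclusion $\inertiagp_G(b)\subseteq\decompgp_G(S)$, I would first recall the standard fact that for $n\in N$ and any $kN$-module $U$ one has $nU\cong U$ (the assignment $nu\mapsto nu$, with the product taken inside $U$ on the right, is an isomorphism of $kN$-modules), so the conjugation action of $G$ on isomorphism classes of simple $kN$-modules factors through $G/N$. Since conjugation by an element of $\inertiagp_G(b)$ carries $b$-modules to $b$-modules, this yields an action of the finite group $\inertiagp_G(b)/N$ on the set $\mathcal{S}$ of isomorphism classes of simple $b$-modules. As $G/N$ is a $p$-group, so is its subgroup $\inertiagp_G(b)/N$; hence every orbit of this action has cardinality a power of $p$. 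By hypothesis $|\mathcal{S}|<p$, so no orbit can have more than one element, i.e. the action is trivial. In particular $gS\cong S$ for every $g\in\inertiagp_G(b)$ and every simple $b$-module $S$, which is precisely $\inertiagp_G(b)\subseteq\decompgp_G(S)$; combining the two inclusions gives the claim.

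The argument is, at bottom, a fixed-point count for a $p$-group acting on a set of size less than $p$, and I do not anticipate a serious obstacle. The only points deserving a line of care are the verification that the conjugation action is well defined on isomorphism classes, that it preserves the set of simple $b$-modules, and that it descends to $\inertiagp_G(b)/N$ — all routine — after which the orbit-size dichotomy (each orbit has size $1$ or a multiple of $p$) closes the proof.
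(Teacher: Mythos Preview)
Your argument is correct. The paper does not supply its own proof of this lemma; it simply quotes the result from \cite[Lemma 2.2]{MR2592757}, so there is nothing to compare against directly. That said, your orbit-counting approach---the action of the $p$-group $\inertiagp_G(b)/N$ on the set of isomorphism classes of simple $b$-modules, combined with the observation that a $p$-group acting on a set of size less than $p$ must act trivially---is precisely the standard proof of this fact and is essentially what appears in the cited source. Both inclusions are handled cleanly, and the small verifications you flag (that conjugation is well defined on isomorphism classes, descends modulo $N$, and preserves the block $b$ when $g\in\inertiagp_G(b)$) are indeed routine.
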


The next lemma given by Lemma \ref{lemma number of simple} helps us to prove our second main theorem from our first one.

\begin{lemma}\label{Cyclicdefect p-power index}
    Let $N$ be a normal subgroup of $G$  and $b$ a block of $kN$ with a cyclic defect group. If $G/N$ is a $p$-group, then we have $\decompgp_G(U)=\inertiagp_G(b)$ for any indecomposable $b$-module $U$.
\end{lemma}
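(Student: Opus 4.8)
The plan is to reduce the statement to a claim about self-equivalences of the module category of a Brauer tree algebra. One inclusion is automatic: for a nonzero $b$-module $U$ and $g\in G$, the module $gU$ lies in the block $gbg^{-1}$ of $kN$, so $gU\cong U$ forces $gbg^{-1}=b$; hence $\decompgp_G(U)\subseteq\inertiagp_G(b)$ always. It therefore remains to prove, for each $g\in\inertiagp_G(b)$ and each indecomposable $b$-module $U$, that $gU\cong U$.

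First I would dispose of the simple modules. Since $b$ has cyclic defect group, Theorem~\ref{Theorem:Dade Brauer Tree} shows that $b$ is either a simple algebra or a Brauer tree algebra with $e$ edges and $e\mid p-1$; in both cases the number of isomorphism classes of simple $b$-modules is less than $p$. Hence Lemma~\ref{lemma number of simple} applies and gives $\decompgp_G(S)=\inertiagp_G(b)$ for every simple $b$-module $S$. In particular, if $b$ is a simple algebra then its only indecomposable module is simple and we are done; so assume $b$ is a Brauer tree algebra. For each $g\in\inertiagp_G(b)$, the functor $U\mapsto gU$ is a $k$-linear self-equivalence of $b\lmod=(gbg^{-1})\lmod$ which, by the above, fixes every simple $b$-module up to isomorphism; consequently it fixes every indecomposable projective $b$-module ($P(S)\mapsto P(gS)\cong P(S)$), commutes with $\Rad$, $\Soc$, $\Omega$ and $\tau$, and preserves the multiplicity of each simple module in a composition series, hence preserves dimension vectors.

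The heart of the matter is then the purely algebraic assertion that \emph{a $k$-linear self-equivalence of the module category of a Brauer tree algebra which fixes every simple module fixes every indecomposable module}; granting it, $gU\cong U$ for all $g\in\inertiagp_G(b)$ and all indecomposable $b$-modules $U$, so $\inertiagp_G(b)\subseteq\decompgp_G(U)$, which together with the first paragraph proves the lemma. To establish this assertion I would use that a Brauer tree algebra is a representation-finite symmetric special biserial algebra, so that each of its indecomposable modules is either an indecomposable projective (uniserial or biserial) or a string module determined up to isomorphism by its string; a self-equivalence fixing all simple modules is induced by an algebra automorphism of $b$ that, after composition with an inner automorphism, fixes every vertex of the quiver of $b$, and such an automorphism carries each string module to an isomorphic one and fixes each indecomposable projective. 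Alternatively, the self-equivalence induces an automorphism of the finite connected stable Auslander--Reiten quiver of $b$ fixing all its simple vertices, and the explicit shape of that quiver for Brauer tree algebras forces the automorphism to be the identity. I expect this structural claim about Brauer tree algebras to be the only real obstacle; everything else is routine Clifford theory built on Lemma~\ref{lemma number of simple} and Theorem~\ref{Theorem:Dade Brauer Tree}.
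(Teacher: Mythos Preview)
Your approach is correct and the structural claim you isolate is true, but the paper argues more directly. Both proofs agree on the preliminaries: the simple-algebra case is trivial, and for simple $b$-modules one invokes Lemma~\ref{lemma number of simple} together with the bound $e\mid p-1$ from Theorem~\ref{Theorem:Dade Brauer Tree}. The difference lies in the passage from simples to arbitrary indecomposables. You package conjugation by $g\in\inertiagp_G(b)$ as a self-equivalence of $b\lmod$ fixing every simple and then appeal to a global structural fact about Brauer tree algebras (via an algebra automorphism scaling arrows, or via the stable Auslander--Reiten quiver). The paper instead runs an induction on composition length: since every indecomposable $b$-module is a string module, one peels off a simple composition factor $S$ via a short exact sequence $0\to S\to U\to V\to 0$ (or its dual) with $V$ indecomposable, applies the inductive hypothesis to $S$ and $V$, and then uses the single scalar fact $\dim_k\Ext^1_b(V,S)=1$ (cited from \cite[Proposition~21.7]{MR860771}) to conclude that the twisted extension $gU$ is isomorphic to $U$ via an explicit pushout--pullback diagram.

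Your route is more conceptual and, once completed, yields the stronger statement that \emph{any} self-equivalence of $b\lmod$ fixing simples fixes all indecomposables; the paper's route is more elementary and entirely self-contained, trading your unproved structural input for a one-line $\Ext^1$ computation already in the literature. In fact the two arguments rest on the same underlying fact: the one-dimensionality of the relevant $\Ext^1$-spaces is exactly what makes your ``automorphism scales each arrow, hence preserves each string'' step go through. One small remark on your write-up: the clause ``a self-equivalence fixing all simple modules is induced by an algebra automorphism of $b$'' is not needed in that generality and is not true for arbitrary algebras; here the equivalence $U\mapsto gU$ is by construction the twist by the algebra automorphism $\beta\mapsto g\beta g^{-1}$, so there is nothing to prove at that point.
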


\begin{proof}
    If the block $b$ is a simple algebra, {then the consequence is trivial because there is only one indecomposable $b$-module.}
    Therefore, we consider the case where $b$ is a Brauer tree algebra.
    Let $U$ be an indecomposable $b$-module.
    We prove that $\decompgp_G(U)=\inertiagp_G(b)$ by induction on the composition length of $U$.
    First, assume that the composition length of $U$ is one, that is, $U$ is a simple $b$-module. Then,
    by Theorem \ref{Theorem:Dade Brauer Tree}, we have that the number of the isomorphism classes of simple $b$-modules is strictly smaller than $p$, so we get that $\decompgp_G(U)=\inertiagp_G(b)$ from Lemma \ref{lemma number of simple}.

    Now suppose that the composition length of $U$ is two or more. We remark that any indecomposable $b$-module is a string module (for exaple, see \cite{MR3823391}).
    Hence, we can take a simple $b$-module $S$ and an indecomposable $b$-module $V$ which satisfy at least one of the following conditions:
    \begin{itemize}
        \item There exists an exact sequence
              \begin{equation*}
                  \xymatrix{
                  0 \ar[r] & S \ar[r]^-{\mu} & U \ar[r]^\nu & V  \ar[r] & 0.
                  }
              \end{equation*}
        \item There exists an exact sequence
              \begin{equation*}
                  \xymatrix{
                  0 \ar[r] & V \ar[r]^-{\mu'} & U \ar[r]^{\nu'} & S \ar[r] & 0.
                  }
              \end{equation*}
    \end{itemize}
    It suffices to prove $\decompgp_G(U)=\inertiagp_G(b)$ under the assumption that there exists the first exact sequence, the other case being proved similarly.
    For any $g\in G$, we take $kN$-module isomorphisms $\varphi \colon gS \rightarrow S$ and $\psi \colon V \rightarrow gV$ by the induction hypothesis.
    We obtain the following commutative diagram:
    \begin{equation*}
        \xymatrix{
        0 \ar[r] & gS \ar@{}[rd]|{P.O.}\ar[d]_-{\varphi} \ar[r]^-{g\mu g^{-1}} & gU \ar[d]_-{\varphi'} \ar[r]^-{g\nu g^{-1}} & gV \ar@{=}[d]_-{\mathrm{id}} \ar[r] & 0 \\
        0 \ar[r] & S \ar@{=}[d]_-{\mathrm{id}} \ar[r]^-{\varepsilon_1 } & X \ar@{}[rd]|{P.B.}  \ar[r]^-{\sigma_1} & gV \ar[r]  & 0 \\
        0 \ar[r] &S \ar@{}[rd]|{P.O.} \ar[d]_-{t} \ar[r]^-{\varepsilon_2} &Y \ar[u]^-{\psi'} \ar[d]_-{t'} \ar[r]^-{\sigma_2 } &V \ar[u]^-{\psi} \ar[r] \ar@{=}[d]_-{\mathrm{id}} &0\\
        0 \ar[r] &S\ar[r]^-{\varepsilon }&U\ar[r]^-{\sigma }&V \ar[r] &0
        }
    \end{equation*}
    where $t$ is a scalar map since $\dim_k\Ext_b^1(V,S)=1$ (see \cite[Proposition 21.7]{MR860771}). Therefore we get $gU\cong U$.
\end{proof}

The following lemmas and corollaries obtained by them are used in the proof of Theorem \ref{Main Theorem: poset struc}.

\begin{lemma}\label{Lemma; tau rigid iff}
    Let $N$ be a normal subgroup of $G$ such that $G/N$ is a $p$-group, $b$ a block of $kN$ satisfying $\inertiagp_G(b)=G$ and $B$ the unique block of $kG$ covering $b$.
    For a $\tau$-rigid $b$-module $U$, the induced module $\induc_N^G U$ is $\tau$-rigid if and only if $\Hom_b(gU,\tau U)=0$ for all $g \in G$. In particular, for a $G$-invariant $\tau$-rigid $b$-module $U$, the induced module $\induc_N^G U$ is also a $\tau$-rigid $B$-module.
\end{lemma}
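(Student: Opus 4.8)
The plan is to use adjointness (Frobenius reciprocity, Proposition \ref{Theorem: Frobenius and projective}) together with the compatibility of induction with the Auslander--Reiten translate (Lemma \ref{induction functor projective cover}) and Mackey's formula in its normal-subgroup form (Remark \ref{Remark: normal mackey}). First I would note that $\induc_N^G U$ is indeed a $B$-module by Proposition \ref{Morita equivalence covering block} and Proposition \ref{p-power index covering uniqueness}, using $\inertiagp_G(b)=G$, so the statement makes sense. Then I would compute
\begin{equation*}
    \Hom_B(\induc_N^G U, \tau \induc_N^G U) \cong \Hom_b(U, \restr_N^G \tau \induc_N^G U) \cong \Hom_b\bigl(U, \restr_N^G \induc_N^G \tau U\bigr),
\end{equation*}
where the first isomorphism is adjointness and the second is Lemma \ref{induction functor projective cover}\eqref{2020-03-25 01:17:54}. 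Applying Remark \ref{Remark: normal mackey} to the normal subgroup $N\trianglelefteq G$ gives
\begin{equation*}
    \restr_N^G \induc_N^G \tau U \cong \bigoplus_{g\in[G/N]} g(\tau U).
\end{equation*}

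The next step is to relate $g(\tau U)$ to $\tau(gU)$. Conjugation by $g\in G$ is an automorphism of $kN$ (since $N$ is normal), hence an equivalence of $kN\lmod$ with itself, and such an equivalence commutes with $\tau$; therefore $g(\tau U)\cong \tau(gU)$ as $kN$-modules, and every summand $g(\tau U)$ is a module over the block $gbg^{-1}=b$ (as $\inertiagp_G(b)=G$), so all the Hom computations take place inside $b\lmod$. Substituting, we obtain
\begin{equation*}
    \Hom_B(\induc_N^G U, \tau \induc_N^G U) \cong \bigoplus_{g\in[G/N]} \Hom_b\bigl(U, \tau(gU)\bigr).
\end{equation*}
Now I would invoke the standard symmetry of $\tau$-rigidity: for finite-dimensional modules over a finite-dimensional algebra, $\Hom(X,\tau Y)=0$ if and only if $\Hom(Y,\tau X)=0$ (this is the symmetry underlying the definition of $\tau$-rigid pairs and is implicit in \cite{MR3187626}; alternatively one can cite the Auslander--Reiten formula directly). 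Applying it with $X=U$, $Y=gU$ shows $\Hom_b(U,\tau(gU))=0$ if and only if $\Hom_b(gU,\tau U)=0$. Hence the direct sum above vanishes if and only if $\Hom_b(gU,\tau U)=0$ for all $g\in G$, which is exactly the claimed criterion.

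For the final sentence: if $U$ is $G$-invariant, then $gU\cong U$ for all $g\in G$, so $\Hom_b(gU,\tau U)\cong\Hom_b(U,\tau U)=0$ because $U$ is assumed $\tau$-rigid; the criterion is therefore satisfied and $\induc_N^G U$ is $\tau$-rigid. Since $\induc_N^G U$ is already known to be a $B$-module, it is a $\tau$-rigid $B$-module. The main obstacle I anticipate is the bookkeeping around the twist $g(-)$: one must be careful that $g(\tau U)\cong\tau(gU)$ and that $gU$ genuinely lies in $b$ (both of which rest on $N\trianglelefteq G$ and $\inertiagp_G(b)=G$), and one should confirm that the adjunction isomorphism and the $\tau$-symmetry isomorphism are compatible with the block decomposition of $\restr_N^G\induc_N^G\tau U$; none of this is deep, but it is where an careless argument would slip.
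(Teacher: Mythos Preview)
Your argument is correct, but it takes a slightly longer route than the paper's. The paper applies Frobenius reciprocity in the other direction: after replacing $\tau\induc_N^G U$ by $\induc_N^G\tau U$ via Lemma~\ref{induction functor projective cover}, it uses that $\induc_N^G$ is \emph{right} adjoint to $\restr_N^G$ to move the restriction onto the \emph{first} argument, obtaining
\[
\Hom_B(\induc_N^G U,\induc_N^G\tau U)\;\cong\;\Hom_b(\restr_N^G\induc_N^G U,\tau U)\;\cong\;\bigoplus_{g\in[G/N]}\Hom_b(gU,\tau U)
\]
directly. This lands on the statement with no further manipulation. Your version restricts the second argument instead, which forces the two extra steps $g(\tau U)\cong\tau(gU)$ and the $\tau$-symmetry $\Hom(X,\tau Y)=0\Leftrightarrow\Hom(Y,\tau X)=0$. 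Both are valid, but neither is needed: even within your setup you could simply note that conjugation by $g$ is an autoequivalence of $b\lmod$, so $\Hom_b(U,g\tau U)\cong\Hom_b(g^{-1}U,\tau U)$, and reindex over $[G/N]$. In short, the approaches are essentially the same; the paper's choice of adjunction direction just avoids the detour.
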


\begin{proof}
    Let $U$ be a $\tau$-rigid $b$-module. Then the $kG$-module $\induc_N^G U$ is a $B$-module by Proposition \ref{Morita equivalence covering block} and Lemma \ref{p-power index covering uniqueness}. By Lemma \ref{induction functor projective cover}, Proposition \ref{Theorem: Frobenius and projective} and Theorem \ref{Mackey's decomposition formula}, we get the following isomorphisms:
    \begin{align*}
        \Hom_B(\induc_N^G U,\tau \induc_N^G U)
         & \cong \Hom_B(\induc_N^G U, \induc_N^G \tau U)    \\
         & \cong \Hom_b(\restr_N^G \induc_N^G U, \tau U)    \\
         & \cong \bigoplus_{g \in [G/N]}\Hom_b(gU, \tau U).
    \end{align*}
    It concludes the proof.
\end{proof}

\begin{lemma}\label{tau rigid pair hom zero}
    Let $N$ be a normal subgroup of $G$ such that $G/N$ is a $p$-group, $b$ a block of $kN$ satisfying $\inertiagp_G(b)=G$ and $B$ the unique block of $kG$ covering $b$.
    Let $U$ be a $b$-module and $P$ a projective $b$-module. If the pair $(U,P)$ satisfies $\Hom_b(P,U)=0$, then we have $\Hom_B(\induc_N^G P, \induc_N^G U)=0$.
\end{lemma}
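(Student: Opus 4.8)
The plan is to argue as in the proof of Lemma~\ref{Lemma; tau rigid iff}: push the $\Hom$-space over $B$ down to $kN$ by adjunction and Mackey's formula, and then reduce the resulting vanishing to the hypothesis. By Proposition~\ref{p-power index covering uniqueness} the block $B$ is the unique block of $kG$ covering $b$, so by Proposition~\ref{Morita equivalence covering block} both $\induc_N^G U$ and $\induc_N^G P$ are $B$-modules, the latter being projective by Proposition~\ref{Theorem: Frobenius and projective}. Since $\induc_N^G$ is adjoint to $\restr_N^G$ (Proposition~\ref{Theorem: Frobenius and projective}), combining this with Remark~\ref{Remark: normal mackey} gives natural isomorphisms
\begin{align*}
  \Hom_B(\induc_N^G P, \induc_N^G U)
   & \cong \Hom_b(\restr_N^G \induc_N^G P,\, U) \\
   & \cong \bigoplus_{g \in [G/N]} \Hom_b(gP,\, U),
\end{align*}
so it suffices to prove that $\Hom_b(gP,U)=0$ for every $g\in G$. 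Each $gP$ is again a projective $b$-module, since conjugation by $g$ is an algebra automorphism of $kN$ fixing the block idempotent $1_b$ (because $\inertiagp_G(b)=G$) and so carries projective $b$-modules to projective $b$-modules.

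For a single summand I would use the composition-factor description of homomorphisms out of a projective. Write $P\cong\bigoplus_i P(S_i)$ as a direct sum of projective covers of simple $b$-modules $S_i$; since $k$ is algebraically closed, $\dim_k\Hom_b(P(S),V)$ equals the composition multiplicity $[V:S]$ for any $b$-module $V$, so the hypothesis $\Hom_b(P,U)=0$ says precisely that none of the $S_i$ is a composition factor of $U$. As $gP\cong\bigoplus_i P(gS_i)$, the vanishing $\Hom_b(gP,U)=0$ amounts to the statement that no $gS_i$ is a composition factor of $U$, which follows once one knows $gS_i\cong S_i$. That last fact is available in the settings in which the lemma is used: by Lemma~\ref{lemma number of simple} every simple $b$-module is $\inertiagp_G(b)$-invariant as soon as $b$ has fewer than $p$ simple modules (in particular, by Theorem~\ref{Theorem:Dade Brauer Tree}, when $b$ has cyclic defect), and it is also part of hypothesis~\ref{Main Theorem: poset struc: inertia invariant} of Theorem~\ref{Inrtro: Main theorem poset iso} that every indecomposable, hence every simple, $b$-module is $\inertiagp_G(b)$-invariant.

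The step I expect to be the main obstacle is exactly this control of the $G$-conjugates of the $S_i$: the bare hypothesis $\Hom_b(P,U)=0$ only rules out the $S_i$ themselves, not their full $G$-orbits, as composition factors of $U$, so some extra input on the conjugation action on simple $b$-modules is needed. Everything else --- the adjunction isomorphism, Mackey's formula, and the fact that conjugation permutes the isomorphism classes of projective indecomposable $b$-modules --- is routine.
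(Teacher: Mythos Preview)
Your approach is the same as the paper's: use adjunction and Mackey's formula to reduce to showing $\Hom_b(gP,U)=0$ for each $g\in G$, then conclude via $gP\cong P$. The paper compresses this last step into a single citation of Lemma~\ref{lemma indec proj}, which gives $\decompgp_G(P(T))=G$, whereas you unpack it through composition factors and the condition $gS_i\cong S_i$.

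Your caution about that step is well placed and in fact applies equally to the paper's argument: Lemma~\ref{lemma indec proj} carries the hypothesis that $T=\restr_N^G S$ is simple, which by Lemma~\ref{simple module lemma} is precisely the $G$-invariance of the simple $b$-modules that you isolate. So the paper's proof, like yours, tacitly relies on this invariance; it is simply absorbed into the citation rather than stated in the lemma. In the only place the lemma is invoked (the proof of Theorem~\ref{Main Theorem: poset struc}) the standing assumption that every indecomposable $B$-module is $\inertiagp_{\widetilde G}(B)$-invariant supplies exactly what both arguments need.
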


\begin{proof}
    By Proposition \ref{Theorem: Frobenius and projective}, Theorem \ref{Mackey's decomposition formula} and Lemma \ref{lemma indec proj}, we get the following isomorphisms:
    \begin{align*}
        \Hom_B(\induc_N^G P, \induc_N^G U)
         & \cong \Hom_b(\restr_N^G \induc_N^G P, U)   \\
         & \cong \Hom_b(\bigoplus_{g \in[G/N]} gP, U) \\
         & \cong \bigoplus_{g \in[G/N]}\Hom_b(P, U)   \\
         & = 0.
    \end{align*}
\end{proof}

\begin{lemma}\label{induced iso}
    Let $N$ be a normal subgroup of $G$, and $U$, $U'$ indecomposable $kN$-modules.
    If the induced module $\induc_N^G U$ is isomorphic to $\induc_N^G U'$, then $U$ is isomorphic to $gU'$ for some $g \in G$. In particular, if $U$ is $G$-invariant and the induced module $\induc_N^G U$ is isomorphic to $\induc_N^G U'$, then $U$ is isomorphic to $U'$.
\end{lemma}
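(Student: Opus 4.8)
The plan is to exploit Mackey's formula together with the Krull--Schmidt theorem. First I would apply $\restr_N^G$ to the assumed isomorphism $\induc_N^G U \cong \induc_N^G U'$. By Remark \ref{Remark: normal mackey} (the case $N = H$), this gives a $kN$-module isomorphism
\begin{equation*}
    \bigoplus_{g \in [G/N]} gU \cong \bigoplus_{h \in [G/N]} hU'.
\end{equation*}
Both sides are direct sums of indecomposable $kN$-modules (each $gU$ is indecomposable since $U$ is, and likewise each $hU'$), so the Krull--Schmidt theorem forces a bijection between the summands. In particular $U \cong gU'$ for some $g$ in the chosen transversal; since $g$ ranges over all of $G$ up to the action being well-defined modulo $N$ (and $nU' \cong U'$ for $n \in N$ because the $kN$-module structure of $nU'$ agrees with that of $U'$ via $u' \mapsto nu'$), this gives $U \cong gU'$ for some $g \in G$, which is the first assertion.

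For the second assertion, suppose in addition that $U$ is $G$-invariant, meaning $xU \cong U$ as $kN$-modules for every $x \in G$. From the first part we have $U \cong gU'$ for some $g \in G$. Applying $g^{-1}$ to this isomorphism yields $g^{-1}U \cong U'$; but $g^{-1}U \cong U$ by $G$-invariance of $U$, so $U \cong U'$ as desired.

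I do not expect a serious obstacle here: the only points requiring a line of care are that $gU$ is genuinely indecomposable (immediate, since $U \mapsto gU$ is an equivalence of categories $kN\lmod \to kN\lmod$ when $N \trianglelefteq G$) and that conjugation by an element of $N$ does not change the isomorphism class of a $kN$-module, so that the sum over the transversal $[G/N]$ is legitimate and the conclusion $U \cong gU'$ can be stated for an arbitrary $g \in G$ rather than only for transversal elements. Both are standard facts about the conjugation action on modules over a normal subgroup recalled earlier in this section.
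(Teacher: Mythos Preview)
Your proposal is correct and follows essentially the same route as the paper: apply $\restr_N^G$ to the isomorphism, use Mackey's decomposition (Remark \ref{Remark: normal mackey}) to obtain $\bigoplus_{g} gU \cong \bigoplus_{h} hU'$, and then invoke Krull--Schmidt. The paper's version is marginally terser (it only records that $U \mid \restr_N^G \induc_N^G U' \cong \bigoplus_{g} gU'$ rather than decomposing both sides), but the argument is the same.
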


\begin{proof}
    Let $U$ and $U'$ be indecomposable $kN$-modules with $\induc_N^G U$ isomorphic to $\induc_N^G U'$. Then, by Theorem \ref{Mackey's decomposition formula}, we have
    \begin{equation*}
        U \mid \restr_N^G \induc_N^G U \cong \restr_N^G \induc_N^G U'\cong \bigoplus_{g \in [G/N]} gU'.
    \end{equation*}
    By the Krull--Schmidt Theorem, we get $U\cong gU'$ for some $g \in G$.
\end{proof}

\section{Proof of the main theorems}\label{section: proof of the main theorems}
In this section, we give proofs of our main theorems.
The next lemma has a key role.

\begin{lemma}\label{Lemma nice functor}
    Let $\Lambda$ and $\Gamma$ be finite dimensional $k$-algebras with the same numbers of isomorphism classes of the simple modules. Assume an exact functor $F$ from $\Lambda\lmod$ to $\Gamma\lmod$ satisfies the following conditions:
    \begin{enumerate}[label=(\roman*)
            ,font=\rm]
        \item The functor $F$ preserves indecomposability, projectivity and $\tau$-rigidity.
        \item If $\Hom_{\Lambda}(P,M)=0$ then $\Hom_{\Gamma}(F(P),F(M))=0$ for any projective $\Lambda$-module $P$ and $\Lambda$-module $M$.
        \item The functor $F$ induces an injection
              from the set of isomorphism classes of indecomposable modules over $\Lambda$ to the one over $\Gamma$.
    \end{enumerate}
    Then the following hold:
    \begin{enumerate}
        \item The functor $F$ induces an embedding of $\mathcal{H}(\stautilt \Lambda)$ into $\mathcal{H}(\stautilt \Gamma)$ which sends any connected component of $\mathcal{H}(\stautilt \Lambda)$ into $\mathcal{H}(\stautilt \Gamma)$ as a connected component.
        \item If $\Lambda$ is a support $\tau$-tilting finite algebra, then the functor $F$ induces an isomorphism from $\stautilt \Lambda$ to $\stautilt \Gamma$ of partially ordered sets.
    \end{enumerate}
\end{lemma}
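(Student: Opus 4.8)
The plan is to show that $F$ carries basic support $\tau$-tilting pairs to basic support $\tau$-tilting pairs injectively and in an order-preserving way, that $F$ is compatible with mutation, and then to play off the $|\Lambda|$-regularity of the Hasse quiver (Remark~\ref{2020-03-20 15:46:59}) against Proposition~\ref{connected component}. Throughout I would identify basic support $\tau$-tilting modules with basic support $\tau$-tilting pairs via Proposition~\ref{stauTilt module and pair}, so that it suffices to understand $(M,P)\mapsto(F(M),F(P))$.

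First I would fix a basic support $\tau$-tilting pair $(M,P)$ over $\Lambda$ and decompose $M=\bigoplus_i M_i$ and $P=\bigoplus_j P_j$ into indecomposables. Condition~(i) makes each $F(M_i)$ indecomposable and each $F(P_j)$ indecomposable projective, and condition~(iii) makes these pairwise non-isomorphic, so $|F(M)|+|F(P)|=|M|+|P|=|\Lambda|=|\Gamma|$. Condition~(i) gives that $F(M)$ is $\tau$-rigid, and condition~(ii), applied to $\Hom_\Lambda(P,M)=0$, gives $\Hom_\Gamma(F(P),F(M))=0$; hence $(F(M),F(P))$ is a basic support $\tau$-tilting pair over $\Gamma$. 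The assignment is injective by the Krull--Schmidt theorem together with condition~(iii). It is order-preserving because an exact functor is additive and preserves epimorphisms, so an epimorphism $M^{\oplus r}\to M'$ is carried to an epimorphism $F(M)^{\oplus r}\to F(M')$; with injectivity this preserves strict inequalities as well.

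Next I would check compatibility with mutation. Given $(M,P)$, an indecomposable summand $X$ of $M$ or of $P$, the associated basic almost complete support $\tau$-tilting pair $(V,Q)$, and $(M',P')=\mu_X(M,P)$, the previous paragraph shows that $(F(V),F(Q))$ is a basic almost complete support $\tau$-tilting pair over $\Gamma$ (now $|F(V)|+|F(Q)|=|\Gamma|-1$), so by Proposition~\ref{support tau tilting mutation theorem} it has exactly two completions. Both $(F(M),F(P))$ and $(F(M'),F(P'))$ complete it, and they are distinct by injectivity, so $(F(M'),F(P'))=\mu_{F(X)}(F(M),F(P))$. Thus $F$ sends every arrow of $\mathcal{H}(\stautilt\Lambda)$ to an arrow of $\mathcal{H}(\stautilt\Gamma)$, and the direction is preserved since mutation-related pairs are comparable and $F$ is order-preserving; so $F$ embeds $\mathcal{H}(\stautilt\Lambda)$ into $\mathcal{H}(\stautilt\Gamma)$. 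For assertion~(1), let $\mathcal{C}$ be a connected component of $\mathcal{H}(\stautilt\Lambda)$. By Remark~\ref{2020-03-20 15:46:59} each vertex $M$ of $\mathcal{C}$ has exactly $|\Lambda|$ neighbours, all in $\mathcal{C}$, and their images are $|\Lambda|$ distinct neighbours of $F(M)$; since $\mathcal{H}(\stautilt\Gamma)$ is $|\Gamma|$-regular and $|\Gamma|=|\Lambda|$, these exhaust the neighbours of $F(M)$, so $F(\mathcal{C})$ is closed under adjacency and hence is a connected component of $\mathcal{H}(\stautilt\Gamma)$. For assertion~(2), if $\Lambda$ is support $\tau$-tilting finite then $\stautilt\Lambda$ is finite, so $\mathcal{H}(\stautilt\Lambda)$ is a single finite connected component by Proposition~\ref{connected component}; its image is then a finite connected component of $\mathcal{H}(\stautilt\Gamma)$, which by Proposition~\ref{connected component} must be all of $\mathcal{H}(\stautilt\Gamma)$. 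Therefore $F$ is a bijection on vertices and an isomorphism of Hasse quivers, hence an isomorphism of the finite posets $\stautilt\Lambda$ and $\stautilt\Gamma$.

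I expect the only real obstacle to be assertion~(1): the transport of $\tau$-rigid and (almost complete) support $\tau$-tilting pairs and the compatibility with mutation are just unwindings of conditions~(i)--(iii), but showing that $F(\mathcal{C})$ is a \emph{full} connected component rather than merely a connected subgraph forces one to use that both Hasse quivers are $n$-regular with the same $n=|\Lambda|=|\Gamma|$, so that the neighbours coming from $\mathcal{C}$ leave no room for arrows of $\mathcal{H}(\stautilt\Gamma)$ escaping the image. Once this is established, Proposition~\ref{connected component} promotes it to the global statement in~(2).
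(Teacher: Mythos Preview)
Your proposal is correct and follows essentially the same line as the paper's proof: transport of (almost complete) support $\tau$-tilting pairs via conditions (i)--(iii), compatibility with mutation through Proposition~\ref{support tau tilting mutation theorem}, order-preservation via exactness, and then the regularity observation of Remark~\ref{2020-03-20 15:46:59} together with Proposition~\ref{connected component}. Your write-up is in fact slightly more explicit than the paper's in justifying injectivity and the passage from a Hasse-quiver isomorphism to a poset isomorphism, but there is no substantive difference in approach.
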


\begin{proof}
    We can easily see that $(F(M),F(P))$ is a support $\tau$-tilting pair (or almost complete support $\tau$-tilting pair) over $\Gamma$ for any support $\tau$-tilting pair (or almost complete support $\tau$-tilting pair, respectively) $(M,P)$ over $\Lambda$.
    Hence, the functor $F$ sends any support $\tau$-tilting $\Lambda$-module to a support $\tau$-tilting $\Gamma$-module.
    Now assume that support $\tau$-tilting $\Lambda$-modules $M_1$ and $M_2$ are support $\tau$-tilting mutation of each other.
    Let $(L,Q)$ be a basic almost complete support $\tau$-tilting pair appearing as a direct summand of both $(M_1,P_1)$ and $(M_2,P_2)$. Then the pairs $(F(M_1),F(P_1))$ and $(F(M_2),F(P_2))$ are distinct by the third assumption on $F$ and have an almost complete support $\tau$-tilting pair $(F(L),F(Q))$ as a direct summand.
    Therefore $(F(M_1),F(P_1))$ and $(F(M_2),F(P_2))$ are support $\tau$-tilting mutation of each other.
    Now assume $M_2 > M_1$ holds. Then by the definition of the partial order on $\stautilt \Lambda$, there exist $r \in \mathbb{N}$ and an epimorphism $M_2^{\oplus r}\xtwoheadrightarrow{f} M_1.$
    Since $F$ is an exact functor, we get an epimorphism
    $F(M_2)^{\oplus r}\xtwoheadrightarrow{F(f)} F(M_1)$
    which implies $F(M_2) > F(M_1)$.
    Hence we have that the functor $F$ embeds $\mathcal{H}(\stautilt \Lambda)$ into $\mathcal{H}(\stautilt \Gamma)$.
    By Remark \ref{2020-03-20 15:46:59}, we have that $\mathcal{H}(\stautilt \Lambda)$ is $|\Lambda|$-regular, so any connected component $C$ in $\mathcal{H}(\stautilt \Lambda)$ is also $|\Lambda|$-regular.
    Hence the image of $C$ under the embedding above is some connected $|\Gamma|$-regular subquiver in $\mathcal{H}(\stautilt \Gamma)$ because $|\Lambda|=|\Gamma|$ and so  is some connected component in $\mathcal{H}(\stautilt \Gamma)$.
    If $\Lambda$ is a support $\tau$-tilting finite algebra, then the image of $\mathcal{H}(\stautilt \Lambda)$ under the embedding above is a finite connected component in $\mathcal{H}(\stautilt \Gamma)$, which coincides with $\mathcal{H}(\stautilt \Gamma)$ by Proposition \ref{connected component}.
\end{proof}

From now on, let $k$ be an algebraically closed field of characteristic $p$.
Now we give a theorem implying our first main theorem.

\begin{theorem}\label{Main Theorem: poset struc}
    Let ${\widetilde G}$ be a finite group satisfying the conditions $G\unlhd {\widetilde G}$ and ${\widetilde G}/G$ is a $p$-group. Let $B$ a block of $kG$ and ${\widetilde B}$ the unique block of $k{\widetilde G}$ covering $B$.
    Assume that any indecomposable $B$-module is $\inertiagp_{\widetilde G}(B)$-invariant. Then we have the following:
    \begin{enumerate}
        \item The induction functor $\induc_G^{\widetilde G}$ induces an embedding of $\mathcal{H}(\stautilt B)$ into $\mathcal{H}(\stautilt {\widetilde B})$ and any connected component of $\mathcal{H}(\stautilt B)$ is embedded as a connected component of $\mathcal{H}(\stautilt {\widetilde B})$.
        \item If $B$ is a support $\tau$-tilting finite block, then the induction functor $\induc_G^{\widetilde G}$ induces an isomorphism from $\stautilt B$ to $\stautilt {\widetilde B}$ of partially ordered sets.
    \end{enumerate}
\end{theorem}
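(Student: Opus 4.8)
The plan is to factor the induction functor $\induc_G^{\widetilde G}$ through the intermediate group $H:=\inertiagp_{\widetilde G}(B)$ and to treat the two halves by completely different means. Since $G\unlhd{\widetilde G}$, conjugation by elements of $G$ is inner on $kG$ and hence fixes the block $B$, so $G\le H\le{\widetilde G}$; as ${\widetilde G}/G$ is a $p$-group, so is $H/G$. By Proposition \ref{p-power index covering uniqueness} there is a unique block $\beta$ of $kH$ covering $B$, and because $H=\inertiagp_{\widetilde G}(B)$ is the full inertial group, Proposition \ref{Morita equivalence covering block}(2) (applied to $G\unlhd{\widetilde G}$ with intermediate subgroup $H$, using that ${\widetilde B}$ is the unique block of $k{\widetilde G}$ covering $B$) shows that $\induc_H^{\widetilde G}\colon\beta\lmod\to{\widetilde B}\lmod$ is a Morita equivalence; in particular it induces an isomorphism $\stautilt\beta\cong\stautilt{\widetilde B}$ of posets, hence an isomorphism of the associated Hasse quivers. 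By transitivity of induction $\induc_G^{\widetilde G}\cong\induc_H^{\widetilde G}\circ\induc_G^H$, so everything comes down to the functor $\induc_G^H\colon B\lmod\to\beta\lmod$, which is well defined and exact by Proposition \ref{Morita equivalence covering block}(1).

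The second step is an application of Lemma \ref{Lemma nice functor} with $\Lambda=B$, $\Gamma=\beta$ and $F=\induc_G^H$. The advantage of having passed to $H$ is that $\inertiagp_H(B)=H$, so all the preliminary results of Section \ref{Preliminaries: Modular rep-Theory} apply verbatim to the chain $G\unlhd H$. I would verify the hypotheses of Lemma \ref{Lemma nice functor} as follows. Every indecomposable $B$-module, in particular every simple $B$-module $T$, is $\inertiagp_{\widetilde G}(B)$-invariant, i.e.\ $\decompgp_H(T)=H$; hence Lemma \ref{simple module lemma} gives a bijection between the simple $\beta$-modules and the simple $B$-modules, so $B$ and $\beta$ have the same number of isomorphism classes of simple modules. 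Green's indecomposability theorem \ref{Green's indecomposability theorem} and Proposition \ref{Theorem: Frobenius and projective} show that $F$ preserves indecomposability and projectivity; since every $\tau$-rigid $B$-module is $H$-invariant, Lemma \ref{Lemma; tau rigid iff} shows that $F$ preserves $\tau$-rigidity, giving condition (i); Lemma \ref{tau rigid pair hom zero} is precisely condition (ii); and Lemma \ref{induced iso}, combined with the $H$-invariance of indecomposable $B$-modules, shows that $F$ is injective on isomorphism classes of indecomposables, which is condition (iii).

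Granting these, Lemma \ref{Lemma nice functor}(1) produces an embedding of $\mathcal{H}(\stautilt B)$ into $\mathcal{H}(\stautilt\beta)$ carrying each connected component of $\mathcal{H}(\stautilt B)$ onto a connected component of $\mathcal{H}(\stautilt\beta)$; composing with the Morita isomorphism $\mathcal{H}(\stautilt\beta)\cong\mathcal{H}(\stautilt{\widetilde B})$ yields assertion (1). If moreover $B$ is $\tau$-tilting finite, Lemma \ref{Lemma nice functor}(2) gives an isomorphism $\stautilt B\cong\stautilt\beta$ of posets, and composing with $\stautilt\beta\cong\stautilt{\widetilde B}$ yields assertion (2). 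The step that really needs care is the reduction in the first paragraph: it is the replacement of ${\widetilde G}$ by its subgroup $H=\inertiagp_{\widetilde G}(B)$ that makes the earlier lemmas directly quotable. If one worked with ${\widetilde G}$ throughout, the Mackey computations underlying Lemmas \ref{Lemma; tau rigid iff}--\ref{induced iso} would range over all cosets in ${\widetilde G}/G$, and one would have to argue separately that the summands indexed by $g\notin\inertiagp_{\widetilde G}(B)$ vanish because $gU$ then lies in a block of $kG$ different from the one containing $\tau U$ (respectively $M$); the factorisation through the inertial group is exactly what circumvents this.
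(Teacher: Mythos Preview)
Your proof is correct and follows essentially the same route as the paper: factor $\induc_G^{\widetilde G}$ through $H=\inertiagp_{\widetilde G}(B)$, handle $\induc_H^{\widetilde G}$ via the Morita equivalence of Proposition \ref{Morita equivalence covering block}(2), and then verify that $\induc_G^H$ satisfies the hypotheses of Lemma \ref{Lemma nice functor} using Lemma \ref{simple module lemma}, Theorem \ref{Green's indecomposability theorem}, Proposition \ref{Theorem: Frobenius and projective}, and Lemmas \ref{Lemma; tau rigid iff}--\ref{induced iso}. Your closing remark about why the reduction to the inertial group is needed is a helpful gloss, but the argument itself matches the paper's proof step for step.
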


\begin{proof}
    Let $\beta$ be the block of $k\inertiagp_{\widetilde G}(B)$ covering $B$. The functors $\induc_G^{\inertiagp_{\widetilde G}(B)}\colon B\lmod\rightarrow \beta\lmod$ and $\induc_{\inertiagp_{\widetilde G}(B)}^{\widetilde G}\colon \beta \lmod \rightarrow {\widetilde B}\lmod$ % which are induced by induction functors 
    are exact and the latter induces is a Morita equivalence by Proposition \ref{Morita equivalence covering block}.
    We remark that the number of isomorphism classes of the simple $B$-modules is equal to the one of the simple $\beta$-modules by the assumption that any indecomposable $B$-module is $\inertiagp_{\widetilde G}(B)$-invariant and Lemma \ref{simple module lemma}.
    In order to accomplish the proof, it is enough to show that $\induc_G^{\inertiagp_{\widetilde G}(B)}$ satisfies the three conditions in Lemma \ref{Lemma nice functor}.
    The functor $\induc_G^{\inertiagp_{\widetilde G}(B)}$ preserves indecomposability, projectivity and $\tau$-rigidity by Theorem \ref{Green's indecomposability theorem}, Proposition \ref{Theorem: Frobenius and projective} and Lemma \ref{Lemma; tau rigid iff} respectively.
    By Lemma \ref{tau rigid pair hom zero} and Lemma \ref{induced iso}, the functor $\induc_G^{\inertiagp_{\widetilde G}(B)}$ satisfies the second and third condition in Lemma \ref{Lemma nice functor}. Therefore we have completed the proof.
\end{proof}

\begin{proof}[Proof of Theorem \ref{Main theorem brauer tree}]
    It is immediate from Theorem \ref{Main Theorem: poset struc} and Lemma \ref{Cyclicdefect p-power index}.
\end{proof}

\begin{corollary}\label{Induction 2tilt}
    Let $G$, ${\widetilde G}$, $B$ and ${\widetilde B}$ be the same as in Theorem \ref{Main Theorem: poset struc}.
    With the same assumption in Theorem \ref{Main Theorem: poset struc}, the induction functor $\induc_G^{\widetilde G}$ induces a partially ordered set morphism from $\twotilt B$ to $\twotilt {\widetilde B}$ which makes the following diagram
    \begin{equation*}
        \xymatrix{
        \stautilt B \ar[r]^-{\induc_G^{\widetilde{G}}} \ar[d]_-[@]{\sim} \ar@{}[dr]|\circlearrowleft &\stautilt {\widetilde B} \ar[d]^-[@]{\sim}\\
        \twotilt B \ar[r]_-{\induc_G^{\widetilde{G}}}& \twotilt {\widetilde B}
        }
    \end{equation*}
    of partially ordered sets commutative where the vertical isomorphisms are given by \cite{MR3187626} and the upper horizontal morphism is given by Theorem \ref{Main Theorem: poset struc}.
\end{corollary}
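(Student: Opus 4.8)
The plan is to realize the bottom horizontal map of the square concretely as the functor obtained by applying the induction functor $\induc_G^{\widetilde G}$ term by term to two-term complexes, and then to check commutativity. By transitivity of induction and the proof of Theorem~\ref{Main Theorem: poset struc}, the functor $\induc_G^{\widetilde G}$ restricts to an exact functor $B\lmod\to{\widetilde B}\lmod$, and by Proposition~\ref{Theorem: Frobenius and projective} it sends projectives to projectives; applying it term by term therefore yields a triangulated functor $K^b(B\lproj)\to K^b({\widetilde B}\lproj)$ (an additive functor applied term-wise respects homotopies) which visibly carries two-term complexes to two-term complexes. Since $B$ and ${\widetilde B}$ are blocks of group algebras, hence symmetric, $\twotilt B=\twosilt B$ and $\twotilt{\widetilde B}=\twosilt{\widetilde B}$, so it suffices to show this term-wise functor maps $\twosilt B$ into $\twosilt{\widetilde B}$ and that the square commutes with it as the bottom arrow.

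The crucial point is the claim that, for a support $\tau$-tilting pair $(M,P)$ over $B$ with minimal projective presentation $P_1\xrightarrow{f_1}P_0\to M\to0$, the sequence $\induc_G^{\widetilde G}P_1\xrightarrow{\induc_G^{\widetilde G}f_1}\induc_G^{\widetilde G}P_0\to\induc_G^{\widetilde G}M\to0$ is a minimal projective presentation of $\induc_G^{\widetilde G}M$. I would prove this using Lemma~\ref{induction functor projective cover} together with the elementary fact that any epimorphism from the projective cover module $P(X)$ onto $X$ is a projective cover (its kernel is superfluous): the displayed sequence is exact with projective ends since $\induc_G^{\widetilde G}$ is exact and preserves projectivity; by Lemma~\ref{induction functor projective cover} one has $\induc_G^{\widetilde G}P_0=\induc_G^{\widetilde G}P(M)\cong P(\induc_G^{\widetilde G}M)$, so $\induc_G^{\widetilde G}P_0\to\induc_G^{\widetilde G}M$ is a projective cover with kernel $\induc_G^{\widetilde G}\Omega(M)\cong\Omega(\induc_G^{\widetilde G}M)$, and likewise $\induc_G^{\widetilde G}P_1=\induc_G^{\widetilde G}P(\Omega M)\cong P(\Omega\,\induc_G^{\widetilde G}M)$ surjects onto $\induc_G^{\widetilde G}\Omega(M)$, which is again a projective cover, i.e. the presentation is minimal.

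Granting the claim, the two-term complex assigned to $(\induc_G^{\widetilde G}M,\induc_G^{\widetilde G}P)$ by Theorem~\ref{2020-03-27 17:22:17} is precisely $\induc_G^{\widetilde G}P_1\oplus\induc_G^{\widetilde G}P\xrightarrow{(\induc_G^{\widetilde G}f_1\ 0)}\induc_G^{\widetilde G}P_0$, the term-wise induction of the complex assigned to $(M,P)$; since $(\induc_G^{\widetilde G}M,\induc_G^{\widetilde G}P)$ is a support $\tau$-tilting pair over ${\widetilde B}$ by Theorem~\ref{Main Theorem: poset struc}, this complex lies in $\twosilt{\widetilde B}$. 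As every element of $\twosilt B=\twotilt B$ is the complex attached to some support $\tau$-tilting pair, term-wise induction restricts to a well-defined map $\twotilt B\to\twotilt{\widetilde B}$, and the square commutes by construction. Finally, since the vertical maps are isomorphisms of partially ordered sets by Theorem~\ref{2020-03-27 17:22:17} and the top map is a morphism of partially ordered sets by Theorem~\ref{Main Theorem: poset struc}, commutativity exhibits the bottom map as a composite of three morphisms of partially ordered sets, hence order-preserving. The only step needing an argument is the minimality claim above, and that is immediate from Lemma~\ref{induction functor projective cover}, so I do not expect a real obstacle.
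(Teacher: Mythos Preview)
Your proposal is correct and follows essentially the same approach as the paper: both take a support $\tau$-tilting pair $(M,P)$ with minimal projective presentation $P_1\to P_0\to M\to 0$, invoke Theorem~\ref{Main Theorem: poset struc} to see that $(\induc_G^{\widetilde G}M,\induc_G^{\widetilde G}P)$ is again a support $\tau$-tilting pair, use Lemma~\ref{induction functor projective cover} to conclude that the induced sequence is a minimal projective presentation, and read off commutativity of the square via Theorem~\ref{2020-03-27 17:22:17}. Your version is a bit more explicit---you spell out why the induced presentation is minimal and why the bottom map is order-preserving---but the argument is the same.
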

\begin{proof}
    Let $(M, P) \in \stautilt B$ and $P_1 \xrightarrow{f_1} P_0 \xrightarrow{f_0} M\rightarrow 0$ the minimal projective presentation of $M$.
    By Theorem \ref{2020-03-27 17:22:17}, the corresponding two-term tilting complex is $P_1\oplus P \xrightarrow{(f_1\ 0)} P_0$.
    Moreover, $(\induc_G^{\widetilde G} M, \induc_G^{\widetilde G} P)$ is a support $\tau$-tilting pair over $\widetilde{B}$ by Theorem \ref{Main Theorem: poset struc} and the sequence $\induc_G^{\widetilde G} P_1 \xrightarrow{\induc_G^{\widetilde G} f_1} \induc_G^{\widetilde G} P_0 \xrightarrow{\induc_G^{\widetilde G} f_0} \induc_G^{\widetilde G} M\rightarrow 0$ is also the minimal projective presentation of $\induc_G^{\widetilde G} M$ by Proposition \ref{induction functor projective cover}. Therefore, the corresponding two-term tilting complex is $\induc_G^{\widetilde G} P_1\oplus \induc_G^{\widetilde G} P \xrightarrow{(\induc_G^{\widetilde G} f_1\ 0)} \induc_G^{\widetilde G} P_0$. It concludes the proof.
\end{proof}

\section{Applications and Examples}\label{section:Applications}

\begin{example}
    Let $p$ be an odd prime, $n$ a positive integer and $G=D_{p^n}\cong C_{p^n}\rtimes C_2$ the dihedral group with order $2p^n$.
    Then the automorphism group $\mathrm{Aut}(G)$ of $D_{p^n}$ is isomorphic to $C_{p^n}\rtimes (C_{p^{n-1}}\times C_{p-1})$.
    Let $Q$ be a nontrivial $p$-subgroup of $\mathrm{Aut}(G)$.
    We denote the group $G\rtimes Q$ by ${\widetilde G}$.
    The group algebra $kG$ is indecomposable as an algebra, so it is the unique block of $kG$ with cyclic defect group.
    There are two simple $kG$-module isomorphism classes $k_G$ and $s_G$ where $k_G$ is the trivial module and $s_G$ is the sign module, and the block $kG$ is a basic Brauer tree algebra associated to the following Brauer tree.
    \vspace{10pt}

    \begin{tikzpicture}[auto,
            node_style/.style={circle,draw=black,thick},
            exp_node_style/.style={circle,draw=black,fill=black,thick},
            edge_style/.style={draw=black, ultra thick}]
        \node[node_style] (e1) at (1,1.5) {};
        \node[exp_node_style] (e2) at (4,1.5) {};
        \node[node_style] (e3) at (7,1.5) {};
        \node(multi) at (10,1.5){: multiplicity $(p^n-1)/2$};

        \draw[edge_style]  (e1) edge node{$k_G$} (e2);
        \draw[edge_style]  (e2) edge node{$s_G$} (e3);
    \end{tikzpicture}
    \vspace{20pt}

    We can calculate $\mathcal{H}(\stautilt kG)$ (see \cite{MR3461065}, \cite{aoki2018torsion}).

    \vspace{10pt}
    \begin{center}
        \begin{tikzpicture}[auto]
            \node(projene) at (3,3.6){$kG$};
            \node(PkGkG) at (1,2.4){$P(k_G)\oplus k_G$};
            \node(PsGsG) at (5,2.4){$P(s_G)\oplus s_G$};
            \node(kG) at (1,1){$k_G$};
            \node(sG) at (5,1){$s_G$};
            \node(zero) at (3,0){$0$};
            \node(multi) at (1,4){$\mathcal{H}(\stautilt kG)$:};
            \draw[->](projene) edge node{} (PkGkG);
            \draw[->](projene) edge node{} (PsGsG);
            \draw[->](PkGkG) edge node{} (kG);
            \draw[->](PsGsG) edge node{} (sG);
            \draw[->](kG) edge node{} (zero);
            \draw[->](sG) edge node{} (zero);
        \end{tikzpicture}
    \end{center}

    The group algebra $k{\widetilde G}$ is also indecomposable as an algebra, hence it is the block of $k{\widetilde G}$ covering $kG$.
    Since $\stautilt kG \cong \stautilt k{\widetilde G}$ by Theorem \ref{Main theorem brauer tree}, although $k{\widetilde G}$ is of wild representation type, we can give $\mathcal{H}(\stautilt k{\widetilde G})$ explicitly.

    \begin{center}
        \begin{tikzpicture}[auto]
            \node(projene) at (3,3.6){$k{\widetilde G}$};
            \node(PkGkG) at (1,2.4){$P(k_{\widetilde G})\oplus \induc_G^{\widetilde G}k_G$};
            \node(PsGsG) at (5,2.4){$P(s_{\widetilde G})\oplus \induc_G^{\widetilde G}s_G$};
            \node(kG) at (1,1){$\induc_G^{\widetilde G}k_G$};
            \node(sG) at (5,1){$\induc_G^{\widetilde G}s_G$};
            \node(zero) at (3,0){$0$};
            \node(multi) at (1,4){$\mathcal{H}(\stautilt k{\widetilde G})$:};

            \draw[->](projene) edge node{} (PkGkG);
            \draw[->](projene) edge node{} (PsGsG);
            \draw[->](PkGkG) edge node{} (kG);
            \draw[->](PsGsG) edge node{} (sG);
            \draw[->](kG) edge node{} (zero);
            \draw[->](sG) edge node{} (zero);
        \end{tikzpicture}
    \end{center}

    According to \cite{kase2017support}, since  $\stautilt kG$ is isomorphic to $\stautilt k{\widetilde G}$ as poset, the Gabriel quiver of $k{\widetilde G}$ coincides with the one of $kG$ except for their all loops.
    In fact, the Gabriel quiver of $kG$ is as follows.
    \begin{center}
        \begin{tikzpicture}[auto,
                node_style/.style={circle,draw=black,thick},
                edge_style/.style={->,bend left,draw=black, thick}]
            \node[node_style] (e1) at (1,1.5) {};
            \node[node_style] (e2) at (4,1.5) {};
            \draw[edge_style]  (e1) edge node{} (e2);
            \draw[edge_style]  (e2) edge node{} (e1);
        \end{tikzpicture}
    \end{center}

    If the nontrivial $p$-subgroup $Q$ of $\mathrm{Aut}(G)$ is cyclic, then the Gabriel quiver of $k{\widetilde G}$ is as follows.
    \begin{center}
        \begin{tikzpicture}[auto,
                node_style/.style={circle,draw=black,thick},
                edge_style/.style={->,bend left,draw=black, thick}]
            \node[node_style] (e1) at (1,1.5) {};
            \node[node_style] (e2) at (4,1.5) {};
            \draw[edge_style]  (e1) edge node{} (e2);
            \draw[edge_style]  (e2) edge node{} (e1);
            \path (e1)  edge[->,out=120,in=240,min distance=10mm,draw=black, thick] (e1);
            \path (e2)  edge[->,out=60,in=300,min distance=10mm,draw=black, thick] (e2);
        \end{tikzpicture}
    \end{center}

    On the other hand, if the nontrivial $p$-subgroup $Q$ of $\mathrm{Aut}(G)$ is non-cyclic, then the Gabriel quiver of $k{\widetilde G}$ is as follows.
    \begin{center}
        \begin{tikzpicture}[auto,
                node_style/.style={circle,draw=black,thick},
                edge_style/.style={->,bend left,draw=black, thick}]
            \node[node_style] (e1) at (1,1.5) {};
            \node[node_style] (e2) at (4,1.5) {};
            \draw[edge_style]  (e1) edge node{} (e2);
            \draw[edge_style]  (e2) edge node{} (e1);
            \path (e1)  edge[->,out=100,in=170,min distance=10mm,draw=black, thick] (e1);
            \path (e1)  edge[->,out=190,in=260,min distance=10mm,draw=black, thick] (e1);
            \path (e2)  edge[->,out=80,in=10,min distance=10mm,draw=black, thick] (e2);
            \path (e2)  edge[->,out=350,in=280,min distance=10mm,draw=black, thick] (e2);
        \end{tikzpicture}
    \end{center}

\end{example}

\section*{Acknowledgements}
% The authors would like to thank Naoko Kunugi for giving valuable comments and suggestions for the development of the paper. 
The authors would like to thank Naoko Kunugi for giving valuable comments.
The second author is grateful to Ryoichi Kase and Yuya Mizuno for their kind advice and useful discussions.

% \bibliography{Ronbun2020}
% \bibliographystyle{amsalpha}

\providecommand{\bysame}{\leavevmode\hbox to3em{\hrulefill}\thinspace}
\providecommand{\MR}{\relax\ifhmode\unskip\space\fi MR }
\providecommand{\MRhref}[2]{%
    \href{http://www.ams.org/mathscinet-getitem?mr=#1}{#2}
}
\providecommand{\href}[2]{#2}

\end{document}